\pgfplotsset{compat=1.18} 
\numberwithin{equation}{section}
\theoremstyle{plain}
\newtheorem{theorem}{Theorem}[section]
\newtheorem{prop}[theorem]{Proposition}
\newtheorem{lem}[theorem]{Lemma}
\newtheorem{cor}[theorem]{Corollary}
\newtheorem*{question*}{Question}
\theoremstyle{definition}
\newtheorem{defn}[theorem]{Definition}
\newtheorem{remark}{Remark}
\theoremstyle{definition}
\newtheorem{thmx}{Theorem}
\newcommand{\R}{\mathbb{R}}
\newcommand{\C}{\mathbb{C}}
\newcommand{\N}{\mathbb{N}}
\newcommand{\D}{\mathbb{D}}
\newcommand{\cF}{\mathcal{F}}
\newcommand{\cN}{\mathcal{N}}
\newcommand{\cR}{\mathcal{R}}
\newcommand{\cS}{\mathcal{S}}
\newcommand{\cM}{\mathcal{M}}
\newcommand{\cP}{\mathcal{P}}
\newcommand{\cD}{\mathcal{D}}
\newcommand{\cH}{\mathcal{H}}
\newcommand{\cT}{\mathcal{T}}
\newcommand{\cU}{\mathcal{U}}
\newcommand{\cV}{\mathcal{V}}
\newcommand{\cW}{\mathcal{W}}
\newcommand{\crit}{\mathrm{crit}}
\renewcommand{\epsilon}{\varepsilon}
\renewcommand{\phi}{\varphi}
\DeclareMathOperator{\Int}{Int}
\DeclareMathOperator{\Ext}{Ext}
\DeclareMathOperator{\val}{val}
\DeclareFontFamily{U}{tipa}{}
\DeclareFontShape{U}{tipa}{m}{n}{<->tipa10}{}
\newcommand{\arc@char}{{\usefont{U}{tipa}{m}{n}\symbol{62}}}
\newcommand{\arc}[1]{\mathpalette\arc@arc{#1}}
\newcommand{\arc@arc}[2]{
  \sbox0{$\m@th#1#2$}
  \vbox{
    \hbox{\resizebox{\wd0}{\height}{\arc@char}}
    \nointerlineskip
    \box0
  }
}
\def\leftrightarrowsfill@{\arrowfill@\leftrarrows\Rrelbar\lrightarrows}
\newcommand{\xleftrightarrows}[2][]{\ext@arrow 3399\leftrightarrowsfill@{#1}{#2}}
\date{\today}
\begin{document}

\title[Topology and singularities of quadrature domains]{On topology and singularities of\\ quadrature domains}

\begin{author}[Rashmita]{Rashmita}
\address{School of Mathematics, Tata Institute of Fundamental Research, 1 Homi Bhabha Road, Mumbai 400005, India}
\email{rashmita@math.tifr.res.in}
\end{author}

\begin{author}[S.~Mukherjee]{Sabyasachi Mukherjee}
\address{School of Mathematics, Tata Institute of Fundamental Research, 1 Homi Bhabha Road, Mumbai 400005, India}
\email{sabya@math.tifr.res.in}
\end{author}

\thanks{Both authors were  supported by  the Department of Atomic Energy, Government of India, under project no.12-R\&D-TFR-5.01-0500. SM was supported in part by SERB research project grant MTR/2022/000248 and by an endowment of the Infosys Foundation.}

\begin{abstract}
We prove a linear upper bound for the number of singular points on the boundary of a quadrature domain, improving a previously known quadratic bound due to Gustafsson \cite{Gus88}. This linear upper bound on the number of boundary double points also strengthens the bound on the connectivity (i.e., the number of complementary components) of a quadrature domain given by Lee and Makarov \cite{LM16}.
Our proofs use conformal dynamics and hyperbolic geometry arguments. Finally, we introduce a new dynamical method to construct multiply connected quadrature domains.
\end{abstract}

\maketitle

\setcounter{tocdepth}{1}
\tableofcontents

\section{Introduction}\label{intro_sec}

Quadrature domains lie at the confluence of complex analysis, statistical physics, fluid dynamics, and dynamical systems. These are domains in the Riemann sphere that admit finite-node quadrature identities for integrable analytic functions.
It turns out that they are characterized by having a semi-global Schwarz reflection map; i.e., by the property of having a real-analytic boundary such that the local Schwarz reflection map with respect to the boundary extends anti-meromorphically to the interior.

\subsection{Quadrature domains in analysis, mathematical physics, and conformal dynamics} Quadrature domains were first considered by Davis in the context of Schwarz functions \cite{Dav74}, and by Aharonov and Shapiro from the point of view of quadrature identities \cite{AS76}.
Numerous applications of quadrature domains were discovered in the following decades. They played an important role in various complex-analytic problems such as quadrature identities \cite{Dav74, AS76, Sak82, Gus83}, extremal problems for conformal mapping \cite{ASS99,LMM21,LMMN20}, Hele-Shaw flows \cite{Ric72, EV92, GV06}, moment problems \cite{Sak78,GHMP00}, dessin d'enfants \cite{ILRS23}, etc. (see \cite{EGKP05}, \cite[\S 1.1]{LLMM2} and the references therein for more connections). Quadrature domains also arise naturally in the study of equilibrium measures in many statistical physics problems and in random matrix theory \cite{ABWZ02,Wie02,TBAZW05,EF05,HM13,LM16,NW24,BY25}. More recently, the dynamics of Schwarz reflection maps associated with quadrature domains was explored in details, and it led to many deep connections between rational dynamics, actions of Kleinian reflection groups, and dynamics of algebraic correspondences \cite{LLMM3,LLMM1,LLMM2,LMMN20,LMM24,LM23}.

\subsection{Complexity of quadrature identity}\label{quad_identity_complexity_subsec}
A quadrature domain $\Omega$ admits a \emph{quadrature function} $R_\Omega$, which is a rational map with all of its poles in $\Omega$, such that the \emph{quadrature identity}
$$
\displaystyle \int_{\Omega} f dxdy=\frac{1}{2i} \oint_{\partial\Omega} f(z) R_{\Omega}(z) dz
$$ 
is satisfied for all analytic functions $f$ on $\Omega$ extending continuously to $\overline{\Omega}$ (for $\Omega$ unbounded, the functions $f$ are also required to vanish at $\infty$).

The degree $d_\Omega$ of $R_\Omega$ is called the \emph{order} of the quadrature domain. Further, the number of poles of $R_\Omega$ (also called the \emph{nodes} of $\Omega)$ is denoted as $n_\Omega$. These numbers can be regarded as a measure of complexity of the quadrature identity satisfied by~$\Omega$. In other words, the analytic complexity of $\Omega$ is encoded in $d_\Omega, n_\Omega$.

\subsection{Real-algebraicity of Schwarz reflections and boundaries of quadrature domains}\label{algebraicity_subsec}
As mentioned above, a quadrature domain $\Omega$ admits an anti-meromorphic map $\sigma:\overline{\Omega}\to\widehat{\C}$ that is identity on $\partial\Omega$. Using a Schottky double construction, Gustafsson proved that the Schwarz reflection $\sigma$ is a real-algebraic function \cite{Gus83}. Specifically, there exists a symmetric compact Riemann surface $\Sigma$ (the `double' of $\Omega$), an anti-conformal involution $\eta$ on $\Sigma$, and a meromorphic map $f:\Sigma\to\widehat{\C}$ such that $\sigma$ admits a simple description in terms of $f$ and $\eta$ .
Denoting the degree of $f$ by $d_f$, one easily obtains a linear relation between the numbers $d_f$ and $d_\Omega$ (see Section~\ref{schwarz_alg_prop} for details). As an upshot of this construction, it was also demonstrated in \cite{Gus83} that $\partial\Omega$ is a real-algebraic curve and hence has a finite number of cusp and double point singularities.

\subsection{Topological complexity of quadrature domains}\label{main_thm_1_subsec}
In many of the analytic, physical, and dynamical applications mentioned above, the topology and geometry of quadrature domains play an important role. In a groundbreaking work, Lee and Makarov showed that the topological complexity of a quadrature domain $\Omega$ can be bounded by the complexity of its quadrature identity \cite{LM16}.
More precisely, they proved that the integers $d_\Omega, n_\Omega$ provide a sharp linear upper bound on the \emph{connectivity} of $\Omega$. Here, the connectivity of $\Omega$, denoted by $\mathrm{conn}(\Omega)$, is the number of components of the \emph{droplet} $\Omega^\complement=\widehat{\C}\setminus\Omega$.

The main result of this paper improves the upper bounds furnished in \cite{LM16}. We show that the same numbers give an upper bound for the sum of the connectivity of $\Omega$, the number of double points on $\partial\Omega$, and certain multiplicities associated with the singular points on $\partial\Omega$.
We make a standing non-degeneracy assumption that $\partial\Omega$ neither contains an isolated point nor contains any critical value of $\sigma$.

\begin{thmx}\label{main_thm_1}
Let $\Omega$ be a quadrature domain with $d_f\geq 3$. We denote the set of all double points (respectively, cusps) on $\partial\Omega$ by $D$ (respectively, $C$). Then,
$$
\mathrm{conn}(\Omega) + \# D + \sum_{p\in D} \delta_p + \sum_{p\in C} \delta_p \leq \min \{d_f+n_\Omega-2, 2d_f-4\}.
$$
Moreover, if $\Omega$ has a node at $\infty$, then
$$
\mathrm{conn}(\Omega) + \# D + \sum_{p\in D} \delta_p + \sum_{p\in C} \delta_p \leq d_f +n_\Omega - 3.
$$
Here, $\delta_p=\lfloor n/4\rfloor$ if $p$ is a cusp of type $(n,2)$, and $\delta_p=\lfloor n/2\rfloor$ if $p$ is a double point with order of contact $n$.
\end{thmx}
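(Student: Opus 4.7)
The plan is to apply the Riemann--Hurwitz formula to the meromorphic map $f\colon \Sigma \to \widehat{\C}$ provided by Gustafsson's Schottky double (Section~\ref{algebraicity_subsec}). Recall that $\Sigma$ is a compact Riemann surface of some genus $g$, $\eta$ is an anti-conformal involution, and $f$ is meromorphic of degree $d_f$ with $f|_{\Omega_\Sigma}$ biholomorphic onto $\Omega$, where $\Omega_\Sigma$ is one of the two components of $\Sigma \setminus \mathrm{Fix}(\eta)$; the Schwarz reflection is recovered as $\sigma = f\circ\eta\circ(f|_{\Omega_\Sigma})^{-1}$.

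The first main step is to pin down the genus. I would argue that $\mathrm{Fix}(\eta)$ is a disjoint union of exactly $\mathrm{conn}(\Omega) + \#D$ smooth circles: each complementary component of $\Omega$ contributes a bounding loop, and each boundary double point forces an additional separation in the normalization underlying the smooth Schottky double, because the two branches at a double point have disjoint preimages in $\Sigma$. Doubling the resulting bordered surface yields $g = \mathrm{conn}(\Omega) + \#D - 1$.

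Since $f|_{\Omega_\Sigma}$ is unramified, the total ramification of $f$ decomposes as $R = R_\sigma + R_{\mathrm{bdry}}$, where $R_\sigma$ is the ramification in $\eta(\Omega_\Sigma)$, equal by the chain rule to the total ramification of $\sigma$ on $\Omega$, and $R_{\mathrm{bdry}}$ is the ramification on $\mathrm{Fix}(\eta)$ contributed by cusps. The heart of the proof is a local computation at each boundary singular point to bound these contributions. At a cusp of type $(n,2)$, a local uniformizer $w$ on $\Sigma$ centered at the preimage produces an expansion $f(w) = \alpha w^2 + \beta w^n + \cdots$; the resulting ramification of $f$ on $\mathrm{Fix}(\eta)$ together with the induced critical structure of $\sigma$ in $\Omega$ contributes at least $2\delta_p = 2\lfloor n/4\rfloor$. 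At a double point of order of contact $n$, $f$ itself is unramified at each of the two preimages on $\mathrm{Fix}(\eta)$, but the high-order tangency of the two branches forces, through a Puiseux-expansion argument leveraging the $\eta$-symmetry, at least $2\delta_p = 2\lfloor n/2\rfloor$ critical points of $\sigma$ clustering in $\Omega$ near the singularity.

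Substituting the genus identity and the ramification lower bounds into Riemann--Hurwitz $R = 2g + 2d_f - 2$ yields $\mathrm{conn}(\Omega) + \#D + \sum_{p \in D \cup C}\delta_p \leq 2d_f - 4$ after rearrangement. The complementary bound involving $n_\Omega$ comes from separately tracking the poles of $f$: the $n_\Omega$ distinct poles in $\eta(\Omega_\Sigma)$ (corresponding to nodes of $R_\Omega$) have combined multiplicity $d_f$, possibly minus one if $\infty \in \Omega$, and this pole-multiplicity ``defect'' constitutes an independent lower bound on $R_\sigma$ that, combined with the double-point contribution, yields $d_f + n_\Omega - 2$ (improved to $d_f + n_\Omega - 3$ when $\infty$ is itself a node, since then one of the poles of $f$ lies in $\Omega_\Sigma$ rather than $\eta(\Omega_\Sigma)$). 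The main obstacle is the local analysis at double points of high contact order: unlike cusps, the contribution is not directly visible in the ramification of $f$ and must be extracted from $\sigma$ via a delicate Puiseux expansion that exploits how the two algebraic branches agree to order $n$.
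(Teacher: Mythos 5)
There are two independent, fatal problems with your proposal, and a third structural gap.

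\textbf{The genus is wrong.} The paper's Proposition~\ref{schwarz_alg_prop} constructs $\Sigma$ as the Schottky double of the \emph{circle-domain} uniformization $\cD$ of $\Omega$, where $\cD$ has exactly $k=\mathrm{conn}(\Omega)$ boundary circles. Hence $\mathrm{Fix}(\eta)$ is a disjoint union of precisely $k$ circles, and $g=k-1$. Boundary double points do \emph{not} create additional components of $\mathrm{Fix}(\eta)$: as explained in Section~\ref{singularity_subsec} (Case~II), a double point $\beta\in\partial K$ is the $f$-image of two distinct points $w_1,w_2$ lying on the \emph{same} circle $P_K\subset\mathrm{Fix}(\eta)$, neither of which is a critical point of $f$. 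So there is no ``additional separation in the normalization'' as you claim, and your identity $g=\mathrm{conn}(\Omega)+\#D-1$ is false.

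\textbf{The Riemann--Hurwitz rearrangement runs in the wrong direction.} Riemann--Hurwitz gives the \emph{equality} $R = 2g-2+2d_f$. If you substitute your genus formula and then use a \emph{lower} bound on $R$ coming from singularities and poles (terms not involving $k$), you obtain
\[
2(k+\#D-1)-2+2d_f \;=\; R \;\geq\; \textrm{(terms not involving $k$)},
\]
which rearranges to a \emph{lower} bound on $k+\#D$, not an upper bound. There is no way to extract the claimed inequality from this scheme.

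\textbf{The indispensable step you have no analogue for.} With the correct genus, Riemann--Hurwitz yields $\#_m\,\mathrm{crit}(f)=2d_f+2k-4$ (Proposition~\ref{schwarz_crit_pnt_prop}). An upper bound on $k$ is then obtained only by establishing a \emph{lower} bound on the number of critical points of $f$ in which $k$ itself appears: the paper's Proposition~\ref{individual_contribution_prop} shows every component $K$ of $\Omega^\complement$ accounts for at least $\#D_K+3$ critical points of $f$. Summing over components gives $\#D+3k\leq 2d_f+2k-4$, hence $k+\#D\leq 2d_f-4$; the $n_\Omega$ and $\delta_p$ refinements are then layered on via the pole count of Section~\ref{pole_crit_points_subsec} and Lemmas~\ref{higher_order_cusp_lem}--\ref{higher_order_dp_lem}. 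Proving ``$3$ critical points per component'' is the technical core of the paper (Section~\ref{special_case_sec}): it requires escaping-set dynamics, a modulus estimate in the non-singular case, and hyperbolic contraction near repelling petals of $\sigma^{\circ 2}$ at cusps and double points. None of this can be recovered from a local Puiseux expansion at the singular points, since in the non-singular case (where the bound $\mathrm{conn}(\Omega)\leq 2d_f-4$ is already nontrivial) there are no boundary singularities at all to contribute ramification.
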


Theorem~\ref{main_thm_1} bounds the topological and geometric complexity of a quadrature domain $\Omega$ in terms of its analytic complexity. For quadrature domains with non-singular boundary, Theorem~\ref{main_thm_1} recovers the main results of \cite{LM16}.

\subsection{Linear upper bound on the number of singular points}\label{sing_linear_bound_subsec}

In \cite{Gus88}, algebro-geometric considerations involving the Schottky double of a quadrature domain were used to give an upper bound on the total number of (weighted) singular points on the boundary. The upper bound given in \cite{Gus88} is \emph{quadratic} in the order $d_\Omega$ of the quadrature domain. Theorem~\ref{main_thm_1}, combined with the Riemann-Hurwitz formula, allows us to improve the bound to a linear one.

\begin{thmx}\label{main_thm_2}
   Let $\Omega$ be a quadrature domain with $d_f\geq 3$. We denote the set of all double points (respectively, cusps) on $\partial\Omega$ by $D$ (respectively, $C$). Then,
   $$
   \# C + 2 \# D + 3\left(\sum_{p\in D} \delta_p + \sum_{p\in C} \delta_p\right) \leq \min \{3d_f+3n_\Omega-6, 6d_f-12\}.
   $$
   Moreover, if $\Omega$ has a node at $\infty$, then
$$
\# C + 2 \# D + 3\left(\sum_{p\in D} \delta_p + \sum_{p\in C} \delta_p\right) \leq  \min \{ 3d_f+3n_\Omega-8, 4d_f+2n_\Omega-10\}.
$$
\end{thmx}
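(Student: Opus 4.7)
The plan is to combine Theorem~\ref{main_thm_1} with the Riemann--Hurwitz formula applied to the meromorphic map $f\colon \Sigma \to \widehat{\C}$, where $\Sigma$ is the Schottky double of $\Omega$. Since $\Sigma$ is a compact Riemann surface of genus $g(\Sigma) = \mathrm{conn}(\Omega) - 1$ and $f$ has degree $d_f$, Riemann--Hurwitz gives
\[
\deg R_f = 2 d_f + 2\, \mathrm{conn}(\Omega) - 4,
\]
where $R_f$ is the ramification divisor of $f$.

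Next I will bound $\deg R_f$ from below by identifying three disjoint sources of ramification on $\Sigma$: (i) the $n_\Omega$ poles of $f$, which sit over the nodes of $\Omega$ and whose orders sum to $d_f$, contribute $d_f - n_\Omega$; (ii) each cusp of type $(n,2)$ on $\partial\Omega$ has a unique preimage on the equator of $\Sigma$ where $f$ has local degree $2$, and a local analysis of the Taylor jet of $f$ imposed by the cusp type and the $\eta$-symmetry yields a total contribution of at least $1 + 2\delta_p$ from that cusp; (iii) each double point of order of contact $n$ similarly contributes at least $2\delta_p$ via a local analysis at its two equator preimages. Summing these contributions gives
\[
(d_f - n_\Omega) + \#C + 2\sum_{p \in C \cup D}\delta_p \;\leq\; \deg R_f \;=\; 2 d_f + 2\, \mathrm{conn}(\Omega) - 4,
\]
i.e., $\#C + 2\sum\delta_p \leq d_f + n_\Omega + 2\,\mathrm{conn}(\Omega) - 4$.

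To conclude, I will pair this inequality with Theorem~\ref{main_thm_1}. Adding twice its first bound yields
\[
\#C + 2\#D + 4\sum\delta_p \;\leq\; 3 d_f + 3 n_\Omega - 8,
\]
and adding twice its second bound yields
\[
\#C + 2\#D + 4\sum\delta_p \;\leq\; 5 d_f + n_\Omega - 12 \;\leq\; 6 d_f - 12
\]
(using $n_\Omega \leq d_f$, which holds since $d_f$ is the sum of the pole orders over the $n_\Omega$ nodes). Since $\sum\delta_p \geq 0$, these two estimates together imply the first statement of Theorem~\ref{main_thm_2}. The node-at-infinity case is handled identically, using the refined inequality from Theorem~\ref{main_thm_1} together with the extra simple pole of $f$ at $\infty$ on the opposite sheet of $\Sigma$, which adjusts the constants as stated.

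The main obstacle is step (ii)--(iii), namely proving the lower bounds of $1 + 2\delta_p$ per cusp and $2\delta_p$ per double point on the ramification contribution. This requires a careful local study of the Taylor expansion of $f$ at the preimage of each singularity, exploiting the constraints imposed by the singularity type $(n,2)$ (respectively by the order of contact $n$) together with the anti-conformal involution $\eta$; once these local bounds are in place, the remainder is a clean algebraic combination with Theorem~\ref{main_thm_1}.
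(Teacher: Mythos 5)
Your high-level plan — combine the Riemann--Hurwitz count $\#_m\mathrm{crit}(f)=2d_f+2\,\mathrm{conn}(\Omega)-4$ (Proposition~\ref{schwarz_crit_pnt_prop}) with Theorem~\ref{main_thm_1} — is exactly what the paper does. However, steps (ii)--(iii), on which you yourself flag the ``main obstacle,'' are not merely hard: as stated they are wrong, both in method and in constant. The cusp type $(n,2)$ is encoded in the \emph{critical value} (the Puiseux expansion of $\partial\Omega$ near $p$), not in extra ramification of $f$ at or near the unique equator preimage $w\in P$; there $f$ always has local degree exactly $2$, so the local ramification contribution is exactly $1$ regardless of $n$, and no amount of Taylor-jet bookkeeping at $w$ forces additional critical points of $f$. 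The extra $\delta_p$ critical points attached to a singularity in the paper are located in $W^-$, far from $P$, and are produced by a \emph{global dynamical} argument: the parabolic-like second iterate $\sigma^{\circ 2}$ near $p$ has a petal structure dictated by $n$, and each cycle of attracting petals captures a critical orbit of $\sigma$ (Lemmas~\ref{higher_order_cusp_lem} and~\ref{higher_order_dp_lem}, in the spirit of Fatou's critical-point theorem). This yields a contribution of $\delta_p$ per singularity, not $2\delta_p$; the factor $2$ is an overcount. A symptom of this is that your algebra produces $\#C+2\#D+4\Delta\leq 3d_f+3n_\Omega-8$, strictly stronger than the theorem; plugging in the correct $\delta_p$ instead of $2\delta_p$ recovers exactly $\#C+2\#D+3\Delta\leq 3d_f+3n_\Omega-6$.

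Two further points you would need to address even with the corrected constant. First, the pole contribution to ramification is $d_f-n_\Omega$ only when $\Omega$ is bounded; when $\Omega$ is unbounded, $f^{-1}(\infty)$ picks up an extra simple point in $W^+$ and the contribution drops to $d_f-n_\Omega-1$. Second, and more delicately, you need the three sources of ramification you list to be essentially disjoint, but a pole of $f$ can coincide with a petal critical point when $\infty$ happens to lie in an attracting petal cycle at a singularity. The paper handles both the bounded/unbounded discrepancy and this potential overlap in Lemma~\ref{crit_relation_lem}, arriving at the lower bound $\#C+(d_f-n_\Omega-2)+\Delta$ (for $n_\Omega\leq d_f-2$) rather than your $\#C+(d_f-n_\Omega)+2\Delta$. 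With that corrected inequality in hand, the remaining algebraic combination with Theorem~\ref{main_thm_1} is exactly as you outline, so the structure of your argument is sound; the gap is that the lower bound on $\deg R_f$ needs the dynamical petal machinery (and the overlap bookkeeping), not a local jet computation.
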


\subsection{Comments on the proof of Theorem~\ref{main_thm_1}}\label{proof_comments_subsec}

While the main idea in the proof of the upper bounds given in \cite{LM16} is a combination of a quasiconformal surgery argument and Hele-Shaw flow of algebraic droplets, our proof relies only on classical holomorphic dynamics and planar hyperbolic geometry methods. However, both proofs depend crucially on iteration of Schwarz reflection maps.
We now highlight the main differences in the proofs.

\subsection*{The non-singular case}
In \cite{LM16}, the upper bounds on connectivity are first established for quadrature domains $\Omega$ with non-singular boundary. The non-singularity assumption allows one to replace the action of the Schwarz reflection $\sigma$ over the droplet with appropriate attracting dynamics modeled on rational maps. The construction of this modified holomorphic dynamical system uses quasiconformal surgery tools, which in general do not apply in the presence of singularities. The desired upper bound is then deduced using a classical result of Fatou that guarantees the existence of a critical point in each attracting basin.

Our proof, in the non-singular case, exploits the dynamics of $\sigma$ over the droplet. Instead of modifying the map $\sigma$, we study its \emph{escaping dynamics}, and apply a modulus argument to locate sufficiently many critical points of $\sigma$ escaping to the droplet. In particular, our proof does not use quasiconformal methods.

\subsection*{The singular case}
To handle quadrature domains with singular boundary, Lee and Makarov resorted to the theory of Hele-Shaw flows of droplets \cite{LM16}. They showed that subjecting a singular droplet to backward Hele-Shaw flow (so that the droplet `shrinks' in the process) resolves the singularities without reducing the connectivity of the quadrature domain. Thus, such a flow brings one back to the setting of non-singular quadrature domains whence the upper bound established in the non-singular case applies.

On the other hand, we employ the idea of studying the escaping dynamics of $\sigma$ to deal with singular quadrature domains as well. The singular situation requires more care, and instead of reducing to the non-singular case, we carry out a detailed analysis of the effect of cusps and double points on the dynamics of $\sigma$. Using  techniques from dynamics on hyperbolic Riemann surfaces, we find critical points of $\sigma$ escaping to singular droplets and critical values of the meromorphic function $f$ (see Section~\ref{algebraicity_subsec}) on boundaries of droplets. A good understanding of the connection between double points and the global dynamics of $\sigma$ allows us to bound the number of double points in Theorem~\ref{main_thm_1}.

\subsection*{Orders of singular points}
The local dynamics of $\sigma$ near a cusp or a double point on $\partial\Omega$ reflect the algebro-geometric quantities called \emph{orders} of cusps and \emph{orders of contact} of double points. We carry out a local analysis of the dynamics of $\sigma$ near singular points, and show that a (possibly empty) collection of critical points of $\sigma$ converge to the singular points under iteration. This gives rise to the additional terms on the left hand side of the inequalities appearing in Theorem~\ref{main_thm_1}.

\subsection*{Consequences of a non-perturbative proof}
Since we work directly with the Schwarz reflection dynamics of the quadrature domain $\Omega$ (without performing quasiconformal modifications or Hele-Shaw perturbations), our proof gives precise information on the relation between the critical points of the meromorphic function $f$ and the components of the droplet $\Omega^\complement$. This also sheds light on the topological/dynamical structure of the \emph{escaping set} of $\sigma$ corresponding to individual components of the droplet.

It is worth pointing out that the Hele-Shaw flow arguments of \cite{LM16} can plausibly be applied to get a linear upper bound on the sum of the connectivity of $\Omega$ and the number of double points on $\partial\Omega$. But such an argument, being perturbative in nature, will not directly yield an exact association between the critical points of $f$ and the components of the droplet.

\subsection{Droplets and Coulomb gas ensembles}\label{stat_phys_subsec}
The complement of a quadrature domain $\Omega$ arises as the support of the equilibrium measure of a `Coulomb gas ensemble'.
According to \cite[\S 2,3]{LM16} (cf. \cite{HM13}), the complement of a quadrature domain is the support of an absolutely continuous measure that minimizes the \emph{combined energy}
$$
I_Q[\mu]:= \int_{\C\times\C} \ln\vert z-w\vert^{-1} d\mu(z) d\mu(w) + \int_{\C} Q d\mu
$$
over all compactly supported probability measures $\mu$. Here, the first term in the summand is the two-dimensional Coulomb energy due to the charge distribution $\mu$ and the second term is the energy of interaction with the external field $Q$, which is of the form
$$
Q(z)=\vert z\vert^2-H(z),
$$
where $H$ is harmonic with $h:=\partial H$ a rational function. Further, $h$ is precisely the quadrature function $R_\Omega$ of $\Omega$. Thus, the order of the quadrature domain $\Omega$ also measures the complexity of the external field $Q$. In view of the above discussion, Theorems~\ref{main_thm_1} and ~\ref{main_thm_2} have the following physical interpretation: 
\emph{the topological and geometric complexities of the quadrature domain $\Omega$ are bounded by linear functions of the complexity of the associated external field $Q$.}

\subsection{Topological configurations of extremal droplets}\label{droplet_topology_subsec}
According to \cite{LM14}, the upper bound of Theorem~\ref{main_thm_1} is sharp. In a sequel to this paper, we will prove a stronger version of this sharpness statement; namely, given any $m\geq 1$ and $n\geq 0$ satisfying $m+n= 2d_f-4$, there exists a quadrature domain $\Omega$ of order $d_\Omega=d_f$, connectivity $m$, and having exactly $n$ boundary double points. There, we will also investigate all possible topological configurations of such \emph{extremal} quadrature domains/droplets. We refer the reader to \cite{LMM21}, \cite[\S 12]{LMMN20} for a complete description of possible topological configurations of a special class of extremal singular droplets arising from \emph{Suffridge polynomials}, and to \cite[\S 2.4]{LM16} for a classification of topological configurations of non-singular droplets.

In this paper, we will explicate the construction of all possible topological configurations of extremal multiply connected quadrature domains for $d_f=3,4$. We will also outline an alternative construction of non-singular quadrature domains of maximal connectivity $2d_f-4$. Our construction of multiply connected quadrature domains is fundamentally different from the existing methods in the literature (cf. \cite{Bel04,CM04,LM14}), and is based on some recently developed surgery techniques in conformal dynamics (cf. \cite{LMMN20}).

\subsection{Organization of the paper}\label{organize_subsec}
The paper is organized as follows. In Section~\ref{prelim_sec}, we recall several background results on quadrature domains and associated Schwarz reflection maps. We recall the algebraic description of a Schwarz reflection map given in terms of a meromorphic map (called the \emph{uniformizing meromorphic map} of a quadrature domain) and an anti-conformal involution defined on a symmetric compact Riemann surface. We then describe the singular points on boundaries of quadrature domains and critical points of the Schwarz reflection maps in terms of the uniformizing meromorphic maps. We also recall the fundamental invariant partition of the dynamical plane of a Schwarz reflection map into \emph{escaping} and \emph{non-escaping sets}. Section~\ref{cusp_dp_dyn_sec} contains a basic account of the local dynamics of a Schwarz reflection map near the singularities on the quadrature domain boundary. This allows us to relate the singular points to certain critical points of the Schwarz reflection map (equivalently, to certain critical points of the 
uniformizing meromorphic map).

Section~\ref{special_case_sec} is the technical heart of the paper. Here we prove a weaker version of Theorem~\ref{main_thm_1} by associating critical points of the Schwarz reflection map (equivalently, of the uniformizing meromorphic map) to each complementary component of a quadrature domain. The proof uses conformal dynamics and hyperbolic geometry techniques, and requires separate analysis of the cases where the quadrature domain has (i) non-singular boundary, (ii) singular boundary without double points, and (iii) singular boundary with double points.

In Section~\ref{pf_main_thm_sec}, we  complete the proofs of our main theorems by combining the results of Sections~\ref{cusp_dp_dyn_sec} and~\ref{special_case_sec} with the information of critical points of the uniformizing meromorphic map associated with the nodes of the quadrature domain (prepared in Section~\ref{prelim_sec}).

The final Section~\ref{sharpness_sec} shows, by means of worked out examples, that the upper bound of Theorem~\ref{main_thm_1} is sharp in an effective manner. This paves the way of future investigation of topological shapes of extremal quadrature domains of arbitrary order.

\noindent\textbf{Notation.}
\begin{itemize}
    \item For a set $X\subset\widehat{\C}$, we denote the interior (respectively, exterior) of $X$ by $\Int{X}$ (respectively, $\Ext{X})$.
    \item An open (respectively, closed) ball with center at $x$ and radius $r>0$ will be denoted by $B(x,r)$ (respectively, $\overline{B}(x,r)$).
\end{itemize}

\section{Preliminaries}\label{prelim_sec}

\subsection{Quadrature domains and Schwarz reflection maps}\label{qd_schwarz_subsec}
Throughout this section, we let $\Omega\subsetneq\widehat{\C}$ be a domain such that $\infty\notin\partial\Omega$ and $\Int{\overline{\Omega}}=\Omega$. 
We will denote the complex conjugation map by $\iota$.

\begin{defn}\label{quad_domain_def}
A domain $\Omega$ is called a \emph{quadrature domain} if there exists a continuous function $\sigma:\overline{\Omega}\to\widehat{\C}$ satisfying the following two properties:
\begin{enumerate}\upshape
\item $\sigma=\mathrm{id}$ on $\partial \Omega$.

\item $\sigma$ is anti-meromorphic on $\Omega$.
\end{enumerate}

\noindent The map $\sigma$ is called the \emph{Schwarz reflection map} of $\Omega$. Its complex conjugate $\iota\circ\sigma$ is called the \emph{Schwarz function} of $\Omega$.
\end{defn}

By \cite{Sak91}, except for a finite number of singular points, which are necessarily cusps and double points, the boundary of a quadrature domain consists of finitely many disjoint non-singular real-analytic curves. 
Thus, for a quadrature domain $\Omega$, the map $\sigma$ is the anti-meromorphic extension of the Schwarz reflection map with respect to $\partial \Omega$ (the reflection map fixes $\partial\Omega$ pointwise).

\begin{defn}[Quadrature functions]\label{quad_func_def}
Let $\Omega\subsetneq\widehat{\C}$ be a domain with $\infty\notin\partial\Omega$ and $\Int{\overline{\Omega}}=\Omega$. 
Functions in $H(\Omega)\cap C(\overline{\Omega})$ are called \emph{test functions} for $\Omega$ (if $\Omega$ is unbounded, we further require test functions to vanish at $\infty$). A rational map $R_\Omega$ is called a \emph{quadrature function} of $\Omega$ if all poles of $R_\Omega$ are inside $\Omega$ (with $R_\Omega(\infty)=0$ if $\Omega$ is bounded), and the identity 
\begin{equation}
\displaystyle \int_{\Omega} f dA=\frac{1}{2i} \oint_{\partial\Omega} f(z) R_{\Omega}(z) dz
\label{quad_identity_1_eqn}
\end{equation}
holds for every test function $f$ for $\Omega$.
\end{defn}

For a bounded domain $\Omega$, Relation~\eqref{quad_identity_1_eqn} can be rewritten as
\begin{equation}
\displaystyle \int_{\Omega} f dxdy= \sum_{r=1}^n \sum_{s=0}^{m_r-1} c_{r_s} f^{(s)}(z_r),
\label{quad_identity_2_eqn}
\end{equation}
where the points $z_1,\cdots,z_n\in\Omega$ are the poles of $R_\Omega$; $m_1,\cdots,m_n$ are the orders of these poles; and $c_{r_s}$, $r\in\{1,\cdots,n\}, s\in\{0,\cdots,m_r-1\}$, are complex numbers. Relations of the form~\eqref{quad_identity_2_eqn} are called \emph{quadrature identities}.

The following theorem is classical, and a proof can be found in \cite[Lemma~2.3]{AS76}, \cite[Lemma~3.1]{LM16}.

\begin{theorem}[Characterization of quadrature domains]\label{characterization}
The following are equivalent.
\begin{enumerate}
\item $\Omega$ is a quadrature domain; i.e. it admits a Schwarz reflection map.

\item $\Omega$ admits a quadrature function $R_\Omega$.

\item The Cauchy transform $\widehat{\chi}_{\Omega}$ of the characteristic function $\chi_{\Omega}$ (of $\Omega$) is rational outside $\Omega$. 
\end{enumerate}
\end{theorem}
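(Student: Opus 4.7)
The plan is to prove the cycle $(1)\Rightarrow(2)\Rightarrow(3)\Rightarrow(1)$, with the complex form of Green's theorem
\[
\int_\Omega f\,dA=\frac{1}{2i}\oint_{\partial\Omega}f(z)\,\overline{z}\,dz,\qquad f\in H(\Omega)\cap C(\overline\Omega),
\]
as the common tool. By Sakai's regularity result recalled after Definition~\ref{quad_domain_def}, $\partial\Omega$ is real-analytic off finitely many cusps and double points, so this identity is justified after exhausting $\Omega$ from inside by smoothly bounded subdomains and passing to the limit.

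For $(1)\Rightarrow(2)$, set $S:=\iota\circ\sigma$. This is meromorphic on $\Omega$, continuous on $\overline\Omega$, equals $\overline z$ on $\partial\Omega$, and has only finitely many poles in $\Omega$ (boundary continuity prevents accumulation at $\partial\Omega$, and $\overline\Omega$ is compact). Let $R_\Omega$ be the sum of the principal parts of $S$ at its poles, a rational function with poles in $\Omega$ and satisfying $R_\Omega(\infty)=0$. Then $g:=S-R_\Omega\in H(\Omega)\cap C(\overline\Omega)$, so Cauchy's theorem yields $\oint_{\partial\Omega}fg\,dz=0$ for every test function $f$, and the Green identity becomes
\[
\int_\Omega f\,dA=\frac{1}{2i}\oint_{\partial\Omega}f\,S\,dz=\frac{1}{2i}\oint_{\partial\Omega}f\,R_\Omega\,dz,
\]
which is \eqref{quad_identity_1_eqn}. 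The unbounded case is handled identically, using the normalisation $R_\Omega(\infty)=0$ together with the required decay of test functions at $\infty$.

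For $(2)\Rightarrow(3)$, fix $z\in\widehat\C\setminus\overline\Omega$ and apply \eqref{quad_identity_1_eqn} to the test function $w\mapsto 1/(z-w)$. The left-hand side is a fixed multiple of $\widehat\chi_\Omega(z)$, while the right-hand side reduces by the residue theorem in $\Omega$ to $\sum_k\Res_{w=w_k}\!\bigl(R_\Omega(w)/(z-w)\bigr)$, a rational function in $z$ whose poles sit at the $w_k\in\Omega$. Conversely, for $(3)\Rightarrow(1)$, recall that $\partial_{\overline z}\widehat\chi_\Omega=-\chi_\Omega$ in the distributional sense, so $h(z):=\widehat\chi_\Omega(z)+\overline z$ is holomorphic on $\Omega$, and continuity of $\widehat\chi_\Omega$ on $\C$ places $h\in H(\Omega)\cap C(\overline\Omega)$. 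Letting $\widetilde R$ be the rational extension of $\widehat\chi_\Omega\vert_{\Omega^\complement}$ (whose poles necessarily lie in $\Omega$, again by the global continuity of $\widehat\chi_\Omega$), we read off $\overline z=h(z)-\widetilde R(z)$ on $\partial\Omega$; the right-hand side is meromorphic on $\Omega$ and continuous up to $\partial\Omega$, so conjugating gives the desired Schwarz reflection $\sigma=\iota\circ(h-\widetilde R)$.

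The step I expect to require the most care is the regularity analysis at cusps and double points: justifying the Green computation across $\partial\Omega$, the continuity of $\widehat\chi_\Omega$ and $S$ up to $\partial\Omega$, and the absence of poles of $S$ and $\widetilde R$ on $\partial\Omega$. I would handle these uniformly by an exhaustion argument, leveraging the $L^1_{\mathrm{loc}}$ integrability of $1/|w-z|$ and the globally continuous behaviour of the Cauchy transform, rather than by attempting pointwise control at the singular points themselves.
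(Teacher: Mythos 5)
The paper itself does not prove this theorem; it records it as classical and defers to \cite[Lemma~2.3]{AS76} and \cite[Lemma~3.1]{LM16}. Your argument reproduces precisely the standard circle of ideas from those references: the complex Green identity $\int_\Omega f\,dA=\tfrac{1}{2i}\oint_{\partial\Omega}f\,\overline{z}\,dz$, the Schwarz function $S=\iota\circ\sigma$ and its principal part, the residue computation applied to $w\mapsto 1/(z-w)$, and the distributional identity $\partial_{\overline z}\widehat\chi_\Omega=-\chi_\Omega$ to reverse direction. For bounded $\Omega$ the proof as written is correct, and your remark that the boundary regularity issues at cusps and double points can be handled by exhaustion is consistent with how the cited sources proceed.

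The one place where ``handled identically'' papers over a real issue is the unbounded case. When $\infty\in\Omega$, the Cauchy integral $\widehat\chi_\Omega(z)=\tfrac1\pi\int_\Omega\tfrac{dA(w)}{w-z}$ does not converge, so statement (3) cannot be interpreted literally; one must either renormalize (e.g.\ replace $\widehat\chi_\Omega$ by $-\widehat\chi_{\Omega^\complement}$ modulo the term $\overline z$ coming from $\chi_\Omega+\chi_{\Omega^\complement}=1$) or work with the Cauchy transform of the bounded droplet $\Omega^\complement$. Similarly, in $(1)\Rightarrow(2)$ the Green identity over an unbounded $\Omega$ requires justifying that the contribution from a large circle tends to zero, which is exactly where the vanishing of the test function at $\infty$ and the allowed pole of $R_\Omega$ at $\infty$ enter; this does not follow from the bounded argument ``identically'' but needs a separate estimate. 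Neither point is a conceptual obstruction, but your exhaustion argument addresses boundary singularities rather than these $\infty$ issues, so the unbounded case would need to be spelled out before the proof is complete.
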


We call $d_\Omega:=\deg (R_\Omega)$ the \emph{order} of the quadrature domain $\Omega$. The poles of $R_\Omega$ are called \emph{nodes} of $\Omega$. The number of distinct nodes of $\Omega$ is denoted by $n_\Omega$.

\subsection{Algebraicity and mapping degrees of Schwarz reflections}\label{schwarz_algebraic_subsec}
Let $\Omega$ be a quadrature domain of connectivity $k$; i.e.,
$$
\mathrm{conn}(\Omega):= \#\ \left(\mathrm{connected\ components\ of}\ \Omega^\complement\right)  = k,
$$
where $\Omega^\complement:=\widehat{\C}\setminus\Omega$.
Denote the Schwarz reflection map of $\Omega$ by $\sigma$.

The following result, which is implicitly proved in \cite{Gus83} (cf. \cite[Lemma~4.1]{LM16}), gives an algebraic description of Schwarz reflections.
We supply a proof for completeness.
\begin{prop}\label{schwarz_alg_prop}
There exist 
\begin{enumerate}
\item a (symmetric) compact Riemann surface $\Sigma$ of genus $k-1$, 
\item an antiholomorphic involution $\eta:\Sigma\to\Sigma$ whose fixed-point set $P$ is a disjoint union of $k$ simple, closed, non-singular real-analytic curves, and 
\item a meromorphic function $f:\Sigma\to\widehat{\C}$,
\end{enumerate} 
such that
\begin{enumerate}
\item[(i)] $\Sigma\setminus P$ has two connected components $W^\pm$,
\item[(ii)] $f:W^+\to\Omega$ is a conformal isomorphism, and
\item[(iii)] $\sigma\equiv f\circ \eta\circ (f\vert_{\overline{W^+}})^{-1}$ on $\overline{\Omega}$.
\end{enumerate}
\end{prop}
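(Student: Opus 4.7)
The plan is to realize $\Sigma$ as the \emph{Schottky double} of (a suitable normalization of) $\overline{\Omega}$, with $\eta$ the tautological involution swapping the two sheets and $f$ obtained by piecing together the inclusion $\overline{\Omega}\hookrightarrow\widehat{\C}$ on one sheet and the Schwarz reflection $\sigma$ on the other. First I would take two disjoint copies $W^+$ and $W^-$ of $\overline{\Omega}$, endow $W^+$ with the complex structure inherited from $\Omega\subset\widehat{\C}$ and $W^-$ with the conjugate complex structure, and then glue them along their common boundary. By Sakai's theorem, $\partial\Omega$ is a real-analytic curve away from finitely many cusp and double-point singularities, so the gluing has to be performed along a suitable normalization $\widetilde{\partial\Omega}$ of $\partial\Omega$; this normalization will be the fixed-point set $P$ of $\eta$.

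The local behavior at boundary points dictates the normalization and the complex structure on $\Sigma$. At a non-singular $p\in\partial\Omega$, the Schwarz reflection $\sigma$ furnishes an anti-holomorphic extension of $\mathrm{id}_{W^+}$ across $p$ into $W^-$, which upgrades the set-theoretic gluing to a holomorphic chart at $p$. At a double point $p$, the two real-analytic branches meeting transversally are separated into disjoint arcs, so they lift to two disjoint smooth pieces of $P$. At a cusp $p$ of type $(n,2)$, the two tangentially meeting branches admit a joint real-analytic parameterization by a single real parameter crossing $0$, so they combine into one smooth arc of $P$. Together these local models produce a compact Riemann surface $\Sigma$ equipped with an anti-holomorphic involution $\eta$ whose fixed-point set $P$ consists of $k$ disjoint simple closed non-singular real-analytic curves, and $\Sigma\setminus P$ has two connected components $W^\pm$ swapped by $\eta$.

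Next I would define $f:\Sigma\to\widehat{\C}$ by letting $f|_{\overline{W^+}}$ be the conformal identification with $\overline{\Omega}$ (so that $f:W^+\to\Omega$ is a conformal isomorphism) and letting $f|_{\overline{W^-}}=\sigma\circ\eta$. The two definitions agree on $P$ since $\eta|_P=\mathrm{id}$ and $\sigma|_{\partial\Omega}=\mathrm{id}$; on $W^-$, the composition $\sigma\circ\eta$ of two anti-holomorphic maps is holomorphic; and continuity of $f$ across the smooth curve $P$ then upgrades to meromorphy by removable singularities along a real-analytic curve. The claimed identity $\sigma=f\circ\eta\circ(f|_{\overline{W^+}})^{-1}$ on $\overline{\Omega}$ is now immediate. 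Finally, a short Euler-characteristic calculation using $\chi(W^+)=2-k$ (since, after resolving singularities, $\overline{\Omega}$ is a planar surface of genus $0$ with $k$ boundary components) yields $\chi(\Sigma)=2(2-k)=4-2k$ and hence $g(\Sigma)=k-1$.

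The main obstacle is the local analysis at singular boundary points: one must verify that the two tangential branches at a cusp genuinely combine into a single real-analytically parameterized arc on $\Sigma$, and that the resulting transition functions across $P$ extend real-analytically so that $\Sigma$ carries a bona fide complex structure compatible with all three desired properties. Once this is in place, the definitions of $\eta$ and $f$, the verification of the reflection identity, and the genus computation are all routine.
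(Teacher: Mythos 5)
The proposal takes the same underlying approach as the paper—construct the Schottky double, the swap involution $\eta$, and the map $f$ defined by the inclusion on one sheet and $\sigma\circ\eta$ on the other—but differs in one important structural respect: you try to double $\overline{\Omega}$ directly, which forces you to resolve the cusp and double-point singularities of $\partial\Omega$ by hand (your ``normalization $\widetilde{\partial\Omega}$''), whereas the paper first invokes Koebe uniformization to replace $\Omega$ by a circle domain $\cD$ of the same connectivity. Since $\partial\cD$ is a disjoint union of $k$ round circles, the Schottky double of $\cD$ is the standard, singularity-free construction, and the extension of the conformal map $\varphi:\cD\to\Omega$ to a continuous surjection $\overline{\cD}\to\overline{\Omega}$ automatically furnishes the ``normalization'' you were building by hand: $f:=\varphi\circ\xi_+^{-1}$ on $\overline{W^+}$ is 2-to-1 onto a double point and has a simple critical point over a cusp, which is exactly the local behavior you are trying to encode in your transition functions. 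Your route can be made to work, but the step you flag as ``the main obstacle''—verifying that the charts across the singular boundary points cohere into a genuine complex structure—is precisely what the detour through $\cD$ makes unnecessary, so the paper's argument is shorter and cleaner.

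Two further points, one a genuine error and one a potential pitfall. First, your local descriptions of the singularities are off. At a double point of $\partial\Omega$ the two branches meet \emph{tangentially}, not transversally (a quadrature domain boundary cannot cross itself); and a cusp is a \emph{single} real-analytic branch with a cuspidal parameterization, not ``two tangentially meeting branches.'' Your conclusions (two disjoint lifts for a double point, one smooth arc for a cusp) are still correct, but the justifications as written would not hold up. Second, when you define $f|_{\overline{W^-}}=\sigma\circ\eta$ and then appeal to ``removable singularities along a real-analytic curve'' to deduce meromorphy, you need a concrete reflection principle on the doubled surface; in the paper's setup this is automatic, because near a point of $P$ one has honest holomorphic coordinates coming from reflecting the circle-domain chart across $\partial\cD$, and in those coordinates $f$ is visibly holomorphic by the Schwarz reflection property of $\sigma$. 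In your setup you would first have to build such coordinates near the images of singular points, which again is the unresolved part of the argument.
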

\begin{proof}
Given a quadrature domain $\Omega$ of connectivity $k$, by the Koebe uniformization theorem, there exist a circle domain $\cD$ of the same connectivity (i.e., a domain $\cD\subset\widehat{\C}$ whose boundary consists of $k$ disjoint round circles) and a conformal isomorphism $\varphi:\cD\to\Omega$. Since $\partial\Omega$ is real-analytic, the map $\varphi$ extends to a continuous surjection $\varphi:\overline{\cD}\to\overline{\Omega}$. Further, $\varphi:\partial\cD\to\partial\Omega$ semi-conjugates $\mathrm{Id}\vert_{\partial\cD}$ to $\sigma\vert_{\partial\Omega}$.
\begin{figure}[ht!]
\captionsetup{width=0.98\linewidth}
\includegraphics[width=0.96\linewidth]{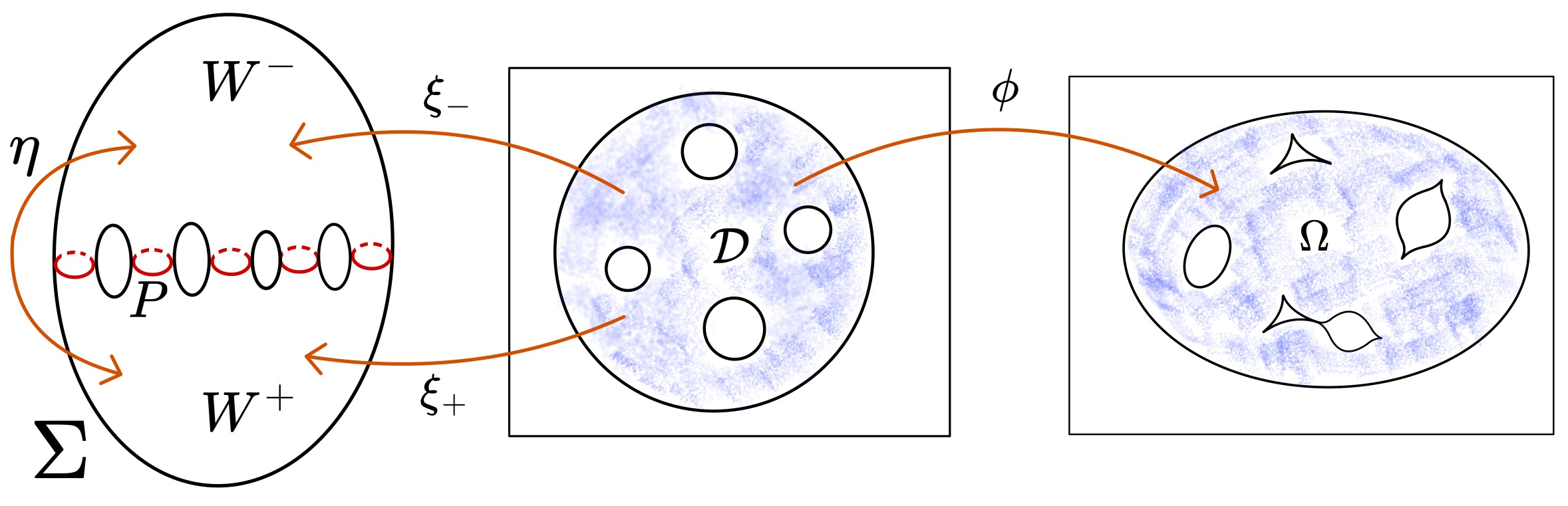}
\caption{Illustrated is the construction of the Riemann surface double $\Sigma$ of the quadrature domain $\Omega$ and the meromorphic map $f$ (cf. Proposition~\ref{schwarz_alg_prop}).}
\label{schottky_double_fig}
\end{figure}

We now construct a compact Riemann surface $\Sigma$ of genus $k-1$ by `doubling' $\cD$; i.e., $\Sigma$ is obtained by welding a copy of $\cD$ carrying the usual complex structure with another copy of $\cD$ carrying the opposite complex structure along the boundary $\partial\cD$, where the identification is given by the identity map $\mathrm{Id}:\partial\cD\to\partial\cD$ (see \cite[Chapter~II, \S 3]{AS60} for a detailed account of the Riemann surface double construction). 
The welding construction gives topological embeddings $\xi_\pm:\overline{\cD}\hookrightarrow\Sigma$ such that
\begin{enumerate}
\item $\xi_+$ is conformal on $\cD$, while $\xi_-$ is anti-conformal on $\cD$;
\item $\xi_+(\cD)\cap\xi_-(\cD)=\emptyset$;
\item $\xi_+(\partial\cD)=\xi_-(\partial\cD)$ is a disjoint union of $k$ non-singular, real-analytic, closed curves;
\item $\xi_+(w)=\xi_-(w)$, for $w\in\partial\cD$; and
\item $\Sigma=\xi_+(\cD\cup\partial\cD)\cup\xi_-(\cD)$.
\end{enumerate}
We set $W^\pm:=\xi_\pm(\cD)$ and $P:=\xi_\pm(\partial\cD)=\partial W^\pm$. Clearly, the map 
$$
\eta:\Sigma\to\Sigma,\quad 
\eta:= \begin{cases}
    \xi_-\circ\xi_+^{-1},\quad \mathrm{on}\ \overline{W^+},\\
    \xi_+\circ\xi_-^{-1},\quad \mathrm{on}\ \overline{W^-},    
\end{cases}
$$
is an anti-conformal involution whose fixed point set is $P$.

We now define a meromorphic map
\begin{align*}
& \hspace{4cm} f:\Sigma\to\widehat{\C}\\ 
& f:= \begin{cases}
    \varphi\circ\xi_+^{-1},\quad \mathrm{on}\ \overline{W^+},\\
    \sigma\circ\varphi\circ\xi_-^{-1}=\sigma\circ\varphi\circ(\xi_+^{-1}\circ\eta)=\sigma\circ\left(f\vert_{\overline{W^+}}\right)\circ\eta,\quad \mathrm{on}\ \overline{W^-}.    
\end{cases}
\end{align*}
Evidently, $f:W^+\to\Omega$ is a conformal isomorphism, and $\sigma\equiv f\circ \eta\circ (f\vert_{\overline{W^+}})^{-1}$ on~$\overline{\Omega}$.
\end{proof}

We will refer to the meromorphic function $f$ associated with a quadrature domain $\Omega$ (constructed in Proposition~\ref{schwarz_alg_prop}) the \emph{uniformizing meromorphic map} of $\Omega$. The degree $d_f$ of the meromorphic function $f:\Sigma\to\widehat{\C}$ is related to the mapping degrees of the Schwarz reflection map $\sigma$. 

\begin{prop}\label{schwarz_deg_prop}
\begin{align*}
d_f:=\deg\left(f:\Sigma\to\widehat{\C}\right) = \deg\left(\sigma:\sigma^{-1}(\Int{\Omega^\complement})\to\Int{\Omega^\complement}\right)\\
 = 1+\deg\left(\sigma:\sigma^{-1}(\Omega)\to\Omega\right).
\end{align*}
Moreover,
$$
d_f=\begin{cases}
d_\Omega\hspace{1.6cm} \mathrm{if}\ \Omega\ \mathrm{is\ bounded},\\
1+d_\Omega\hspace{0.95cm} \mathrm{if}\ \Omega\ \mathrm{is\ unbounded}.
\end{cases}
$$
\end{prop}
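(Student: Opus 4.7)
The plan is to use the algebraic description $\sigma \equiv f \circ \eta \circ (f|_{\overline{W^+}})^{-1}$ supplied by Proposition~\ref{schwarz_alg_prop} to translate the mapping degrees of $\sigma$ into preimage counts for $f : \Sigma \to \widehat{\C}$, and then to match those counts with the pole count of the quadrature function $R_\Omega$.

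First I would fix a regular value $z_0$ of $f$ away from $\partial \Omega$ and split $f^{-1}(z_0)$ between $W^+$ and $W^-$. Since $f|_{W^+} : W^+ \to \Omega$ is a conformal isomorphism, exactly one $f$-preimage of $z_0$ lies in $W^+$ when $z_0 \in \Omega$ and none when $z_0 \in \Int{\Omega^\complement}$; all remaining preimages lie in $W^-$. Composing with the anti-conformal bijection $\eta : W^- \to W^+$ and with $f|_{W^+}$, I would show that preimages in $W^-$ correspond bijectively, with matching local multiplicities, to the preimages of $z_0$ under $\sigma$ inside $\Omega$. Taking $z_0 \in \Int{\Omega^\complement}$ and $z_0 \in \Omega$ in turn then yields $\deg(\sigma : \sigma^{-1}(\Int{\Omega^\complement}) \to \Int{\Omega^\complement}) = d_f$ and $\deg(\sigma : \sigma^{-1}(\Omega) \to \Omega) = d_f - 1$, which gives the first chain of equalities. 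These degrees are well-defined because $\sigma|_{\partial \Omega} = \mathrm{id}$ forces both restrictions of $\sigma$ to be proper.

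For the second assertion, I would specialize $z_0 = \infty$. The preimages of $\infty$ in $\Omega$ under $\sigma$, counted with multiplicity, are exactly the poles of the Schwarz function $S := \iota \circ \sigma$ in $\Omega$. By applying the Cauchy--Pompeiu formula to $g(w) = \overline{w}$ and invoking the Cauchy transform characterization in Theorem~\ref{characterization}(3), one sees that $S - R_\Omega$ extends to a function holomorphic on $\Omega$; hence $S$ and $R_\Omega$ share identical principal parts at every point of $\Omega$. For bounded $\Omega$, all $d_\Omega$ poles of $R_\Omega$ lie inside $\Omega$ while $\infty \in \Int{\Omega^\complement}$, so the first chain at $z_0 = \infty$ forces $d_f = d_\Omega$. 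For unbounded $\Omega$, the weighted pole count of $R_\Omega$ in $\Omega$ is still $d_\Omega$, and the first chain at $z_0 = \infty \in \Omega$ forces $d_f - 1 = d_\Omega$.

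The main obstacle is the pole-matching step, namely verifying that the nodes of $\Omega$ (defined via $R_\Omega$) coincide as a weighted set with the poles of $S$ in $\Omega$, with correct bookkeeping at infinity in the unbounded case. This is classical and flows from Theorem~\ref{characterization} together with the Cauchy--Pompeiu identity; once it is in hand, the degree identities produced in the first step close out both asserted equalities of the proposition.
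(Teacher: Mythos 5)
Your argument follows essentially the same route as the paper. The first chain of identities is derived, as in the paper, from the algebraic description $\sigma = f \circ \eta \circ (f|_{\overline{W^+}})^{-1}$ together with the univalence of $f$ on $W^+$, by matching the $f$-preimages of a point in $W^-$ with the $\sigma$-preimages of that point in $\Omega$; and the numerical identification with $d_\Omega$ is obtained by specializing to preimages of $\infty$ and distinguishing the bounded ($\infty \in \Int{\Omega^\complement}$) from the unbounded ($\infty \in \Omega$) case, exactly as in the paper. The only deviation is that the paper simply invokes \cite[Lemma~3.1]{LM16} for the fact that $\sigma$ and $R_\Omega$ have the same poles with the same multiplicities, whereas you sketch a Cauchy--Pompeiu rederivation; that is morally what underlies the cited lemma, but as written it glosses over the unbounded case (where the naive Cauchy transform $\widehat{\chi}_\Omega$ diverges and one must pass to $\chi_{\Omega^\complement}$ or a regularized transform), so it is cleaner to quote the reference as the paper does. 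One small phrasing slip: you announce you will "fix a regular value $z_0$" and later take $z_0=\infty$, which need not be a regular value of $f$; this is harmless since the degree of a proper map equals the weighted fiber count over every point, but the wording should be adjusted.
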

\begin{proof}
The first part follows from the relation $\sigma\vert_{\Omega}\equiv f\circ \eta\circ (f\vert_{W^+})^{-1}$ and the fact that $f$ maps $W^+$ injectively onto $\Omega$ (see Proposition~\ref{schwarz_alg_prop}).

By \cite[Lemma~3.1]{LM16}, the Schwarz reflection $\sigma$ and the quadrature function $R_\Omega$ have the same poles. Thus, $\infty$ has $d_\Omega$ preimages under $\sigma$ counted with multiplicity. By our standing assumption, $\infty\notin\partial\Omega$. Hence, $\Omega$ is bounded (respectively, unbounded) if $\infty\in\Omega^\complement$ (respectively, $\infty\in\Omega$). It now follows from the above facts and the first part of the proposition that 
$$
d_f=\deg\left(\sigma:\sigma^{-1}(\Int{\Omega^\complement})\to\Int{\Omega^\complement}\right)=d_\Omega\quad \mathrm{if\ \Omega\ is\ bounded}
$$
and
$$
d_f-1=\deg\left(\sigma:\sigma^{-1}(\Omega)\to\Omega\right)=d_\Omega\quad \mathrm{if\ \Omega\ is\ unbounded}.
$$
\end{proof}

\subsection{Relation between singularities of quadrature domains and uniformizing meromorphic maps}\label{singularity_subsec}

A point $p\in\partial\Omega$ is called \emph{regular} if there is a disc $B(p,\epsilon)$ such that $\Omega\cap B(p,\epsilon)$ is a Jordan domain and $\partial\Omega\cap B(p,\epsilon)$ is a simple non-singular real-analytic arc. A point $p\in\partial\Omega$ is called \emph{singular} if it is not a regular point (cf. \cite[\S 2.2]{LMM21}).

By \cite{Sak91} (cf. \cite{Gus83}), any singular point on the boundary of a quadrature domain $\Omega$ is a cusp or a double point. In fact, these singularities can be read from the uniformizing meromorphic map $f$ associated with $\Omega$.

We will use the notation of Proposition~\ref{schwarz_alg_prop}. Let $K$ be a component of $\Omega^\complement$ and $P_K$ be the component of $P$ that is mapped onto $\partial K$ by $f$. Further, let $\beta\in\partial K$ be a singular point of $\partial K$. 
\smallskip

\noindent\textbf{Case I: Cusps.} Suppose that $\beta$ is not a cut-point of $\partial K$.  Then, there exists a unique point $w\in P_k$ with $f(w)=\beta$. The facts that $P_K$ is a non-singular real-analytic curve, $f$ is analytic, and $\beta$ is a singular point of $\partial K$, together imply that $w$ is a critical point of $f$. Further, the conformality of $f$ on $W^+$ implies that $w$ is a simple critical point of $f$. Thus, the point $\beta$ is a cusp singularity of $\partial K$ and the curve $\partial K$ points towards $\Omega$ at $\beta$. 
In particular, each cusp on $\partial\Omega$ is a critical value of $f$ with an associated critical point on~$P$. 
\smallskip

\noindent\textbf{Case II: Double points.}
Now let $\beta\in\partial K$ be a cut-point of $\partial K$. Then, there are distinct points $w_1,\cdots, w_r\in P_K$, $r\geq 2$, such that $f(w_i)=\beta$, $i\in\{1,\cdots,r\}$. Since $f:W^+\to\Omega$ is a conformal isomorphism and $P_K$ is a non-singular curve, it follows that $r=2$ and neither $w_1$, nor $w_2$, is a critical point of $f$. Thus, the pieces of $P_K$ near $w_1, w_2$ are mapped conformally by $f$, and hence for $\epsilon>0$ small enough, $\partial K\cap B(\beta,\epsilon)$ is the union of two non-singular real-analytic arcs meeting tangentially at $\beta$. Hence, $\beta$ is a double point singularity of $\partial K$.

\subsection{Critical points of Schwarz reflections and uniformizing meromorphic maps}\label{critical_subsec}
We denote the critical points of $f$ and $\sigma$ by $\mathrm{crit}(f)$ and $\mathrm{crit}(\sigma)$, respectively.
The following elementary fact will play a fundamental role in the proof of our main results. We recall the notation $k=\mathrm{conn}(\Omega)$.
\begin{prop}\label{schwarz_crit_pnt_prop} 
We have
$$
\#_m\ \mathrm{crit}(f)=2d_f+2k-4,
$$
where $\#_m$ means counted with multiplicity.
In particular, $\sigma$ has at most $2d_f+2k-4$ critical points in $\Omega$, counted with multiplicity.
\end{prop}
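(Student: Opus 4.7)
The proposition asks for two things: an exact count of critical points of $f:\Sigma\to\widehat{\C}$, and a bound on critical points of $\sigma$ inside $\Omega$.

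The plan for the first identity is a direct application of the Riemann--Hurwitz formula. By Proposition~\ref{schwarz_alg_prop}, $\Sigma$ is a compact Riemann surface of genus $g = k-1$ and $f:\Sigma\to\widehat{\C}$ is meromorphic of degree $d_f$. Applying Riemann--Hurwitz to $f$, with target of genus $0$, yields
$$
2g-2 \;=\; d_f\cdot(-2) + \sum_{p\in\Sigma}(e_p-1),
$$
where $e_p$ denotes the local degree of $f$ at $p$. Rearranging and substituting $g=k-1$ gives
$$
\#_m\,\crit(f) \;=\; \sum_{p\in\Sigma}(e_p-1) \;=\; 2(k-1)-2 + 2d_f \;=\; 2d_f + 2k-4,
$$
which is the claimed count. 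No obstacle here; this is routine.

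For the second claim, I would use the formula $\sigma = f\circ\eta\circ(f|_{\overline{W^+}})^{-1}$ on $\overline{\Omega}$ from Proposition~\ref{schwarz_alg_prop}(iii). Since $f|_{W^+}:W^+\to\Omega$ is a conformal isomorphism (so it and its inverse have no critical points) and $\eta:\Sigma\to\Sigma$ is an anti-conformal involution (hence locally an anti-biholomorphism, with no critical points and preserving local degrees), the critical points of $\sigma$ in $\Omega$ correspond bijectively, with multiplicity preserved, to the critical points of $f$ lying in $\eta(W^+)=W^-$. In particular, the number of critical points of $\sigma$ in $\Omega$ (counted with multiplicity) equals $\#_m\bigl(\crit(f)\cap W^-\bigr)$, which is at most the total $\#_m\,\crit(f) = 2d_f+2k-4$.

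The only mildly subtle point is that critical points of $f$ are in principle distributed over all of $\Sigma = W^+\sqcup P\sqcup W^-$; those on $P$ correspond to cusps of $\partial\Omega$ (as recalled in Case~I of Section~\ref{singularity_subsec}) and those on $W^+$ would, by symmetry under $\eta$, be mirrored by critical points on $W^-$ of the same multiplicity. Since we only want an upper bound on the number of critical points of $\sigma$ lying in $\Omega$, no careful accounting is needed, and the bound $2d_f+2k-4$ follows immediately. There is no genuine obstacle in this proposition; it is essentially Riemann--Hurwitz combined with the algebraic description of $\sigma$.
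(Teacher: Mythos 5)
Your proof is correct and takes essentially the same approach as the paper: the count of $\mathrm{crit}(f)$ is the Riemann--Hurwitz formula applied to $f\colon\Sigma\to\widehat{\C}$ with $\Sigma$ of genus $k-1$, and the bound on $\mathrm{crit}(\sigma)\cap\Omega$ follows from the factorization $\sigma = f\circ\eta\circ(f|_{\overline{W^+}})^{-1}$. One small remark: your aside that critical points of $f$ on $W^+$ would ``by symmetry under $\eta$'' be mirrored on $W^-$ is not actually true --- $f$ is not $\eta$-equivariant (if it were, $\sigma$ would be trivial) --- but the point is moot, since $f|_{W^+}$ is a conformal isomorphism onto $\Omega$ and hence has no critical points on $W^+$ at all; the inequality $\#_m(\mathrm{crit}(f)\cap W^-)\le \#_m\,\mathrm{crit}(f)$ needs no such symmetry.
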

\begin{proof}
Recall that $\Sigma$ has genus $k-1$, and $f:\Sigma\to\widehat{\C}$ has degree $d_f$. The result is a simple consequence of the Riemann-Hurwitz formula.
\end{proof}

By the description of $\sigma$ given in Proposition~\ref{schwarz_alg_prop}, we have that
$$
\mathrm{crit}(\sigma)= f(\eta(\mathrm{crit(f)\cap W^-})).
$$
Further, the critical values of $\sigma$ are also critical values of~$f$. 

As mentioned in Section~\ref{singularity_subsec}, a critical point of $f$ on $P$ creates a cusp on $\partial\Omega$. Hence, points of $\mathrm{crit}(f)\cap P$ do not give rise to critical points of the Schwarz reflection $\sigma$.
Consequently, a cusp of $\partial\Omega$ is a critical value of $f$, but in general not a critical value of $\sigma$. In fact, a cusp $\beta\in\partial\Omega$ is a critical value of $\sigma$ if and only if there exists a critical point of $f$ in $f^{-1}(\beta)\cap W^-$; i.e., when there are at least two distinct critical points of $f$ in the fiber $f^{-1}(\beta)$: one in $P$ and another in $W^-$.

\subsubsection{Critical points in the pole set}\label{pole_crit_points_subsec}
Let us record the connection between critical points of $f$ (respectively, $\sigma$) and the poles of $f$ (respectively, $\sigma$) for future reference.

We first consider the case of an unbounded quadrature domain $\Omega$. As $\infty\in\Omega$ and $\sigma\colon\sigma^{-1}(\Omega)\xrightarrow{d_f-1:1}\Omega$ is a branched covering, we have that $\sigma$ has $d_f-1$ many poles (counted with multiplicity). Consider the set $P_\sigma$ consisting of the poles of $\sigma$. If $\sigma$ has a pole $\omega$ of order $m_\omega>1$, then it is a critical point of $\sigma$ of multiplicity $(m_\omega-1)$. So the total number of critical points of $\sigma$ (counted with multiplicity) in the pole set $P_\sigma$ is equal to
$$
\sum_{\omega \in P_\sigma}(m_\omega-1) = (d_f-1)-\# P_\sigma = (d_f-1)-n_\Omega.
$$
Hence, in the unbounded case, the number of critical points of $f$ (counted with multiplicity) in $f^{-1}(\infty)=\eta((f\vert_{W^+})^{-1}(P_\sigma))$ is also equal to $d_f-n_\Omega-1$.

Now suppose that $\Omega$ is a bounded quadrature domain. In this case, $\infty\in\Int{\Omega^\complement}$ and $\sigma\colon\sigma^{-1}(\Int{\Omega^\complement})\xrightarrow{d_f:1}\Int{\Omega^\complement}$ is a branched covering, so $\sigma$ has $d_f$ many poles (counted with multiplicity). As before, let $P_\sigma$ be the pole set of $\sigma$. Since $\# P_\sigma = n_\Omega$, it follows that the total number of critical points of $\sigma$ (counted with multiplicity) in the pole set $P_\sigma$ is $d_f-n_\Omega$.
Hence, in the bounded case, the number of critical points of $f$ (counted with multiplicity) in $f^{-1}(\infty)=\eta((f\vert_{W^+})^{-1}(P_\sigma))$ is also equal to $d_f-n_\Omega$.

\subsection{Dynamics of Schwarz reflections}\label{schwarz_dyn_basic_subsec}

For a quadrature domain $\Omega$, we denote the set of singular points (cusps and double points) on $\partial\Omega$ by $\mathcal{S}$, and set 
$$
T(\sigma):=\widehat{\C}\setminus\Omega,\ \mathrm{and}\ T^0(\sigma):=T(\sigma)\setminus\mathcal{S}.
$$ 
We call the set $T(\sigma)$ the \emph{droplet} of $\sigma$, and the set $T^0(\sigma)$ (obtained by removing the singular points from the droplet boundary) the \emph{desingularized droplet/fundamental tile} of $\sigma$. We remark that $T^0(\sigma)$ is neither open, nor closed. 
  
The \emph{escaping/tiling set} of $\sigma$ is
$$
T^\infty(\sigma):=\displaystyle\bigcup_{n=0}^\infty \sigma^{-n}(T^0(\sigma)).
$$ 
It is the set of all points that eventually land in the fundamental tile $T^0(\sigma)$. Connected components of $\sigma^{-n}(T^0(\sigma))$ are called \emph{tiles of rank $n$}. When $T^0(\sigma)$ is disconnected, the tiling set admits a further partition based on which component of $T^0(\sigma)$ a point escapes to:
$$
T^\infty(\sigma)=\bigsqcup_{\substack{\mathrm{components\ K_0}\\ \mathrm{of\  T^0(\sigma)}}} \widehat{T_{K_0}^\infty}(\sigma),\quad \widehat{T_{K_0}^\infty}(\sigma):=\displaystyle\bigcup_{n=0}^\infty \sigma^{-n}(K_0).
$$
We will denote the component of $\widehat{T_{K_0}^\infty}(\sigma)$ containing $K_0$ by $T_{K_0}^\infty(\sigma)$.

The \emph{non-escaping set} of $\sigma$ is defined as $\mathscr{K}(\sigma):=\widehat{\C}\setminus T^\infty(\sigma)$. The escaping and non-escaping sets provide a $\sigma$-invariant partition of $\widehat{\C}$.

\begin{prop}
    The tiling set $T^\infty(\sigma)$ is open and the non-escaping set $\mathscr{K}(\sigma)$ is compact.
\end{prop}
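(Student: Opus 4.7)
The plan is to prove that every $z\in T^\infty(\sigma)$ admits an open neighborhood in $\widehat{\C}$ contained in $T^\infty(\sigma)$; compactness of $\mathscr{K}(\sigma)=\widehat{\C}\setminus T^\infty(\sigma)$ is then automatic, since $\widehat{\C}$ is compact. Given $z\in T^\infty(\sigma)$, let $n=n(z)\geq 0$ be the smallest integer with $\sigma^n(z)\in T^0(\sigma)$. I would argue by analyzing the two cases $n=0$ and $n\geq 1$ separately, with the base case $n=0$ being the structurally interesting one.

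\emph{Base case} $(n=0)$. If $z\in\Int T(\sigma)$, a small ball $B(z,\epsilon)$ missing the finite set $\mathcal{S}$ already lies in $T^0(\sigma)\subseteq T^\infty(\sigma)$. If instead $z$ is a regular boundary point, i.e., $z\in\partial\Omega\setminus\mathcal{S}$, then $\partial\Omega$ is a non-singular real-analytic arc near $z$, and $\sigma$ extends anti-holomorphically across $\partial\Omega$ in a disc $B(z,\epsilon)$ as a local involution. Choosing $\epsilon$ small enough that $B(z,\epsilon)\cap\overline{T(\sigma)}\subseteq T^0(\sigma)$, this local reflection sends $B(z,\epsilon)\cap\overline{\Omega}$ into $B(z,\epsilon)\cap\overline{T(\sigma)}\subseteq T^0(\sigma)$; consequently $B(z,\epsilon)\subseteq T^0(\sigma)\cup\sigma^{-1}(T^0(\sigma))\subseteq T^\infty(\sigma)$.

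\emph{Inductive case} $(n\geq 1)$. Since every singular point $p\in\mathcal{S}\subset\partial\Omega$ is a fixed point of $\sigma$ lying outside $T^0(\sigma)$, no such point can belong to $T^\infty(\sigma)$. Hence the entire orbit $z,\sigma(z),\ldots,\sigma^{n-1}(z)$ lies in the open set $\Omega$, where $\sigma$ is anti-meromorphic and, viewed as a map into $\widehat{\C}$, continuous (poles mapping to $\infty$). A routine induction using the openness of $\Omega$ produces an open neighborhood $U$ of $z$ on which $\sigma^n$ is defined and continuous: if $\sigma^j(z)\in\Omega$ for every $j<n$, then for $w$ sufficiently close to $z$ the iterate $\sigma^j(w)$ remains close to $\sigma^j(z)\in\Omega$ and hence in $\Omega$, so $\sigma^{j+1}(w)$ is defined. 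Shrinking $U$ further, we can arrange $\sigma^n(U)$ to sit inside the open neighborhood of $\sigma^n(z)\in T^0(\sigma)$ produced by the base case, which was shown to lie in $T^\infty(\sigma)$. Because $T^\infty(\sigma)$ is closed under taking $\sigma$-preimages in the domain of definition, we conclude $U\subseteq T^\infty(\sigma)$.

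The main obstacle is the regular boundary part of the base step: the set $T^0(\sigma)$ is neither open nor closed (it contains the arcs of $\partial\Omega\setminus\mathcal{S}$), so one cannot simply invoke continuity of iterates to push open neighborhoods of boundary points into $T^\infty(\sigma)$. Overcoming this requires genuinely using the defining property of a quadrature domain --- the anti-holomorphic extension of $\sigma$ across the non-singular real-analytic boundary --- to realize $(\overline{\Omega},\overline{T(\sigma)})$ locally as the two halves of a disc exchanged by an involution. Once this reflection picture is installed at regular boundary points, the inductive case is just continuity bookkeeping.
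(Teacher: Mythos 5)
Your proof is correct and takes the standard route---openness is trivial in $\Int T(\sigma)$, at regular boundary points you use the local anti-conformal reflection across the non-singular arc, and at points escaping after $n\geq 1$ steps you pull back by continuity of $\sigma^n$ along an orbit that your fixed-point observation at $\mathcal{S}$ confines to the open set $\Omega$---which is essentially the argument the paper defers to in \cite[Proposition~2.3]{LMM24}. The only minor imprecision is the claim that the local reflection sends $B(z,\epsilon)\cap\overline{\Omega}$ into $B(z,\epsilon)$ itself; the involution need not preserve a round disc, but this is harmless since all you actually need is for the reflected image of $B(z,\epsilon)\cap\overline{\Omega}$ to lie in $T^0(\sigma)$, which a further shrinking of $\epsilon$ (to keep the image clear of the finite set $\mathcal{S}$) guarantees.
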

\begin{proof}
    The proof of \cite[Proposition~2.3]{LMM24} applies verbatim to the current setting.
\end{proof}

\section{Dynamics near cusps and double points}\label{cusp_dp_dyn_sec}

In this section, we will expound the local dynamics of a Schwarz reflection map $\sigma:\overline{\Omega}\to\widehat{\C}$ near the set $\mathcal{S}$ consisting of the singular points of $\partial\Omega$.

\subsection{Cusp dynamics and critical points}

Let $p$ be a cusp of $\partial \Omega$ of type $(n,2)$, for some odd integer $n\geq 3$; i.e., there is a local diffeomorphic change of coordinates that brings $\partial\Omega$ (near $p$) to the cuspidal curve $y^2=x^n$ (near $0$). By \cite[Proposition~A.3]{LMM24}, we have the following asymptotic development of $\sigma^{\circ 2}$ near $p$:
\begin{equation}
\sigma^{\circ 2}(z)= z+ c\cdot (z-p)^{n/2}+o(\vert z-p\vert^{n/2}),
\label{cusp_expansion_eqn}
\end{equation}
where $c\neq 0$ (for a suitable choice of the branch of square root). By \cite[Proposition~A.3]{LMM24}, $\sigma^{\circ 2}$ has $n-2$ invariant directions in $\Omega$, and these directions are attracting and repelling in an alternating manner. Standard arguments from the local fixed point theory of holomorphic parabolic germs now show that there exist $n-2$ `attracting and repelling petals' of $\sigma^{\circ 2}$ at $p$, one for each attracting/repelling direction. We collect some basic properties of attracting/repelling petals below. For the proofs of analogous statements in the classical setting of parabolic germs, we refer the reader to \cite[\S 10]{Mil06}. For the analysis of local dynamics of a Schwarz reflection map at a cusp (i.e., the local fixed theory of a tangent-to-identity Puiseux series), see \cite[\S 4.2.1]{LLMM1}, \cite[Theorem~5.4]{LLMM3}, \cite[Appendix~A]{LMM24}.

\subsubsection*{Attracting petals}
For each attracting direction $\vec{v}$ at $p$, there is an open set $\cP$ containing the attracting direction $\vec{v}$ (called an \emph{attracting petal} associated with $\vec{v}$) such that $p\in\partial\cP$, the map $\sigma^{\circ 2}$ is injective on $\cP$, $\sigma^{\circ 2}(\cP)\subset\cP$, and the $\sigma^{\circ 2}$-orbits of all points in $\cP$ converge to $p$ asymptotic to $\vec{v}$. Further, $\cP$ contains a wedge of angle arbitrarily close to $2\pi/(n-2)$ based at $p$ such that a change of coordinate of the form
\begin{equation}
z\mapsto \frac{c_1}{(z-p)^{\frac{n-2}{2}}}
    \label{pre_fatou_formula}
\end{equation}
(for suitable $c_1\neq 0$ and a choice of a branch of square root) carries $\cP$ to an approximate right half-plane and conjugates $\sigma^{\circ 2}\vert_\cP$ to a map of the form $\zeta\mapsto \zeta+1+O(1/\zeta)$ near $\infty$. Thus, the proof of the existence of Fatou coordinates for parabolic germs can be applied mutatis mutandis to the current situation to prove the existence of a conformal map on $\cP$ that conjugates $\sigma^{\circ 2}$ to the translation $\zeta\mapsto\zeta+1$ on a right half-plane (cf. \cite[Theorem~10.9]{Mil06}). With this conformal conjugacy at our disposal, we can apply the arguments of \cite[Theorem~10.15]{Mil06} to conclude that every attracting direction at $p$ corresponds to an immediate attracting basin (the set of points that converge to $p$ asymptotic to this attracting direction under iterates of $\sigma^{\circ 2}$, and has $p$ on its boundary), and such an immediate attracting basin contains a critical point of $\sigma^{\circ 2}$. Clearly, such an immediate attracting basin of $\sigma^{\circ 2}$ is either fixed by $\sigma$ or it forms a $2-$cycle under $\sigma$. It follows that every cycle (under $\sigma$) of immediate attracting basins at $p$ contains a critical point of $\sigma$.

\subsubsection*{Repelling petals}
Similarly, for each repelling direction $\vec{v}$ at $p$, there is an open set $\cP$ containing the repelling direction $\vec{v}$ (called a \emph{repelling petal} associated with $\vec{v}$) such that $p\in\partial\cP$, a branch $g$ of $\sigma^{-2}$ is well-defined on $\cP$, $g(\cP)\subset\cP$, and the $g$-orbits of all points in $\cP$ converge to $p$ asymptotic to $\vec{v}$. The repelling petal also contains a wedge of angle arbitrarily close to $2\pi/(n-2)$ based at $p$ such that a change of coordinate of the form~\eqref{pre_fatou_formula} carries $\cP$ to an approximate left half-plane conjugating $\sigma^{\circ 2}\vert_{g(\cP)}$ to a map of the form $\zeta\mapsto \zeta+1+O(1/\zeta)$ near $\infty$.

The union of the attracting and repelling petals (along with the point $p$) covers the domain of definition of $\sigma^{\circ 2}$ near $p$.

\begin{lem}\label{cusp_tiling_dyn_lem}
Let $U\subset T^\infty(\sigma)$ be an open set such that there exists an inverse branch $g$ of $\sigma^{\circ 2}$ on $U$ with $g(U)\subset U$ and $p\in\partial g(U)$. Then, there exists $z\in U$ such that $U\ni g^{\circ k}(z)\xrightarrow{k\to\infty} p$.    
\end{lem}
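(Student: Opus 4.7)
The plan is to exploit the parabolic petal structure of $\sigma^{\circ 2}$ at the cusp $p$, identify the inverse branch $g$ locally with the repelling-petal inverse $g_\cP$, and then iterate this identification along a suitably chosen $g$-orbit.

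First, I would use the hypothesis $p\in\partial g(U)$ to extract a sequence $z_j\in g(U)$ with $z_j\to p$, and write $z_j=g(w_j)$. Continuity of $\sigma^{\circ 2}$ at its fixed point $p$ then gives $w_j=\sigma^{\circ 2}(z_j)\to p$. The inclusion $U\subset T^\infty(\sigma)$ is now used crucially: the attracting petals of $\sigma^{\circ 2}$ at $p$ lie in the non-escaping set $\mathscr{K}(\sigma)$ and are therefore disjoint from $U$, so for large $j$ both $w_j$ and $z_j$ must lie in the union of the repelling petals at $p$. Passing to a subsequence, I may assume $w_j,z_j\in\cP$ for a fixed repelling petal $\cP$.

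Next, I would identify $g$ with $g_\cP$ near this orbit. Since $\sigma^{\circ 2}$ is locally injective near the parabolic fixed point $p$ (via the Fatou coordinate change~\eqref{pre_fatou_formula}), and distinct petals have disjoint interiors in a small punctured neighborhood of $p$, the only $\sigma^{\circ 2}$-preimage of $w_j$ lying sufficiently close to $p$ is $g_\cP(w_j)\in\cP$; all other preimages stay at a definite Euclidean distance from $p$. Since $g(w_j)=z_j$ lies in $\cP$ close to $p$, this forces $g(w_j)=g_\cP(w_j)$, and by uniqueness of analytic continuation of inverse branches, $g\equiv g_\cP$ throughout the connected component $V$ of $U\cap\cP$ containing $w_j$.

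The heart of the argument, and the main obstacle I anticipate, will be iterating this identification: I need the forward $g_\cP$-orbit of $w_j$ to remain inside $V$, so that $g^{\circ k}(w_j)=g_\cP^{\circ k}(w_j)$ for all $k\geq 0$. I plan to work in the Fatou coordinate $\phi\colon\cP\to\{\operatorname{Re}\zeta>N\}$ in which $g_\cP$ becomes the translation $\zeta\mapsto\zeta+1$ and $p$ corresponds to $+\infty$, so the orbit is $\{\phi(w_j)+k\}_{k\geq 0}$. By the asymptotic $\phi^{-1}(\zeta)-p=O(|\zeta|^{-2/(n-2)})$ for a cusp of type $(n,2)$, the Euclidean step sizes of this orbit decay to zero as $\operatorname{Re}\zeta\to\infty$. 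Using this decay, together with the connectedness of $\widetilde{V}=\phi(V)$ and its accumulation at $+\infty$, I would argue that for $w_j$ chosen deep enough in $\cP$ the translate $\widetilde{V}+1$ must meet $\widetilde{V}$; connectedness then forces a tail of $\widetilde{V}$ to contain a truncated half-plane $\{\operatorname{Re}\zeta>M\}$, which is invariant under $\zeta\mapsto\zeta+1$. With this invariance secured, $g^{\circ k}(w_j)=g_\cP^{\circ k}(w_j)\in V$ for all $k$, and the Fatou-coordinate dynamics yields $g_\cP^{\circ k}(w_j)\to p$. Setting $z:=w_j$ then completes the proof.
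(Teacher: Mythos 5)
Your first three steps mirror the paper's proof closely and correctly: extract $z_j = g(w_j) \to p$ in $g(U)$, rule out attracting petals because $U \subset T^\infty(\sigma)$, and note that $z_j$ close to $p$ forces $g = g_\cP$ on the component $V$ of $U \cap \cP$ containing $w_j$ (the paper passes over this identification quickly, whereas you spell it out). The paper then finishes simply by invoking $g(U)\subset U$ to keep the whole orbit inside $U$ (not inside a single component $V$) and citing the repelling-petal dynamics; your instinct that the persistence of $g = g_\cP$ along the orbit is the delicate point is sound.

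The problem is Step~4, which you correctly flag as the crux. Two issues. (i) The assertion that connectedness of $\widetilde V$ together with $\widetilde V + 1 \cap \widetilde V \neq \emptyset$ ``forces a tail of $\widetilde V$ to contain a truncated half-plane'' is simply false: a thin horizontal strip $\{\operatorname{Re}\zeta > N,\ |\operatorname{Im}\zeta - a| < 1/4\}$ is connected, accumulates at $+\infty$, is invariant under $\zeta \mapsto \zeta + 1$, and contains no half-plane. Moreover the half-plane is unnecessary even if it were available, since $\widetilde V + 1 = \phi(g_\cP(V))$ is connected and sits inside $\phi(U\cap\cP)$, so meeting $\widetilde V$ already forces $\widetilde V + 1 \subset \widetilde V$, which is all you need. (ii) More seriously, the preliminary claim that $\widetilde V + 1$ must meet $\widetilde V$ ``for $w_j$ deep enough'' is not justified by the step-size decay you quote; it is the genuine content of Step~4, and the argument as outlined does not establish it. So as written, Step~4 does not close the gap you have (rightly) identified.
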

\begin{proof}
As $g(U)$ is an open set having $p$ on its boundary and such that $\sigma^{\circ 2}$ is defined on $g(U)$, it must intersect at least one petal $\cP$. The fact that $T^\infty(\sigma)$ is totally invariant under $\sigma$ implies that $g(U)\subset T^\infty(\sigma)$.
Let $N:= U\cap P$. If $P$ is an attracting petal, then points in $N$ would converge non-trivially to $p$ under iterates of $\sigma^{\circ 2}$. However, this is not possible as $\sigma^{\circ 2}$-iterates of points in $U\subset T^\infty(\sigma)$ land in $T^0(\sigma)$ in finite time.
Thus, $P$ is a repelling petal for $\sigma^{\circ 2}$. Since $g$ is an inverse branch of $\sigma^{\circ 2}$, it now follows from the properties of repelling petals mentioned above that for $z\in N$, the orbit $\{g^{\circ k}(z)\} \xrightarrow{k\to\infty} p$. The $g$-invariance of $U$ implies that $g^{\circ k}(z)\in U$, for all~$k\in\N$.
\end{proof}

\subsubsection*{Critical orbits associated with attracting petals}
\begin{lem}\label{higher_order_cusp_lem}
    Set $\delta_p=\lfloor n/4\rfloor$. Then, there are $\delta_p$ distinct critical orbits of $\sigma$ that converge non-trivially to $p$. In particular, these critical orbits lie in $\Int{\mathscr{K}(\sigma)}$.
\end{lem}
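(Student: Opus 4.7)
The plan is to study the attracting petals of $\sigma^{\circ 2}$ at $p$, organize them into orbits under the antiholomorphic involution $\sigma$, and extract one critical point of $\sigma$ per orbit; the count should turn out to be exactly $\delta_p = \lfloor n/4\rfloor$.

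First I would use the normal form \eqref{cusp_expansion_eqn} together with \cite[Proposition~A.3]{LMM24} to identify the $n-2$ alternating invariant directions of $\sigma^{\circ 2}$ inside $\Omega$ and, via the branched change of coordinate $w = (z-p)^{1/2}$ (which converts $\sigma^{\circ 2}$ into an honest holomorphic parabolic germ of the form $w \mapsto w + \tfrac{c}{2}w^{n-1}+\cdots$ on an approximate half-plane corresponding to $\Omega$), determine precisely which are attracting and which are repelling. Because $\sigma$ near $p$ is an antiholomorphic involution fixing the two tangent boundary branches of $\partial\Omega$ and therefore acts essentially as reflection in the cusp axis, the configuration of invariant directions is symmetric about this axis. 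A case analysis modulo $4$, keyed to whether the cusp axis itself lies along an attracting or a repelling invariant direction (determined by the phase of the coefficient $c$), shows that $\sigma$ groups the immediate attracting basins of $\sigma^{\circ 2}$ into exactly $\lfloor n/4\rfloor$ cycles --- each cycle being either a single basin fixed by $\sigma$ or a pair of basins swapped by $\sigma$.

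For each such $\sigma$-cycle $\mathcal C$ of immediate attracting basins, the parabolic basin theorem (applied in the pre-Fatou coordinate \eqref{pre_fatou_formula} to the normal form on an attracting petal) produces a critical point $c_0$ of $\sigma^{\circ 2}$ inside $\mathcal C$. Since a critical point of $\sigma^{\circ 2}$ is either a critical point of $\sigma$ itself or maps under $\sigma$ to one, and $\mathcal C$ is $\sigma$-invariant as a set, this yields a critical point of $\sigma$ inside $\mathcal C$. Its forward $\sigma$-orbit is trapped in $\mathcal C$ and converges non-trivially to $p\in\partial\Omega$ by the defining property of attracting basins. Because this orbit never enters the fundamental tile $T^0(\sigma)$, the critical point lies in $\mathscr{K}(\sigma)$; the basins being open, it lies in $\Int\mathscr{K}(\sigma)$. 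Distinct $\sigma$-cycles being pairwise disjoint, one obtains $\lfloor n/4\rfloor$ distinct critical orbits.

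The hard part will be the bookkeeping in the first paragraph: reading off the attracting/repelling status of each of the $n-2$ invariant directions from the local normal form, identifying the $\sigma$-pairing precisely in each residue class $n\bmod 4$, and checking that the resulting count of $\sigma$-cycles always collapses to the unified expression $\lfloor n/4\rfloor$. A secondary (but more routine) point is verifying that once a critical point of $\sigma^{\circ 2}$ is located in a cycle $\mathcal C$, the promotion to a critical point of $\sigma$ inside the same cycle can always be arranged without leaving $\Int\mathscr K(\sigma)$.
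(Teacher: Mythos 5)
Your proposal follows essentially the same route as the paper: organize the $n-2$ invariant directions of $\sigma^{\circ 2}$ into $\sigma$-orbits, count the attracting ones, and extract a critical point of $\sigma$ from each $\sigma$-cycle of immediate parabolic basins via the existence-of-critical-points-in-basins theorem. The one place where you have deferred the real work is exactly where the paper leans on an external result. You say the attracting/repelling status of the cusp direction is ``determined by the phase of the coefficient $c$'' and that the case analysis ``collapses to $\lfloor n/4\rfloor$''; but this collapse is \emph{not} automatic. If $n=4j+3$ and the cusp axis were an attracting direction, the count would come out as $j+1$, not $j=\lfloor n/4\rfloor$ (the real direction gives one $\sigma$-fixed cycle, and the $4j$ non-real directions split into $j$ attracting pairs). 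The count is $\lfloor n/4\rfloor$ only because the geometry of a Schwarz reflection at a cusp forces the cusp axis to be repelling when $n\equiv 3\pmod 4$ and attracting when $n\equiv 1\pmod 4$; the paper gets this from \cite[Proposition~A.5]{LMM24}. So the ``secondary, more routine'' part of your plan is fine, but the part you flagged as the hard part truly is the crux: you need to prove (or cite) that the phase of $c$ is \emph{pinned down} by the cusp geometry, not merely use it as a parameter in a case split. Otherwise you would only obtain the weaker bound ``at least $\lfloor n/4\rfloor$ cycles,'' which happens to suffice for Theorem~\ref{main_thm_1} but is not what the lemma asserts and is not how the paper argues.
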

\begin{proof}
   Recall that each $\sigma-$cycle of immediate attracting basins at $p$ contains a critical point of $\sigma$. Thus, it suffices to show that there are $\delta_p$ cycles (under $\sigma$) of attracting directions at $p$. By a local conformal change of coordinates, we may assume that $p=0$, the cusp points towards the positive real axis, and that $c\in\R\setminus\{0\}$ in Equation~\eqref{cusp_expansion_eqn}. This ensures that the positive real axis is an invariant direction under $\sigma$, and the second iterate $\sigma^{\circ 2}$ has $n-3$ non-real invariant directions (at the origin), each of which forms a $2-$cycle under $\sigma$ (see \cite[Proposition~A.4]{LMM24}).
   We consider the following two cases.
   \smallskip

   \noindent\textbf{Case I: $n=4j+3$, for some $j\geq 0$.} In this case, by \cite[Proposition~A.5]{LMM24}, the positive real axis is a repelling direction for $\sigma$. There are $4j$ non-real invariant directions for $\sigma^{\circ 2}$, and they form $2j$ many $2-$cycles under $\sigma$. Since the attracting and repelling directions are arranged in an alternating manner, precisely $j=\lfloor n/4\rfloor$ of these $2-$cycles correspond to attracting directions.
      \smallskip

   \noindent\textbf{Case II: $n=4j+1$, for some $j\geq 1$.} According to \cite[Proposition~A.5]{LMM24}, the positive real axis is an attracting direction for $\sigma$ in this case. There are $4j-2$ non-real invariant directions for $\sigma^{\circ 2}$, and they form $2j-1$ many $2-$cycles under $\sigma$. Since the attracting and repelling directions are arranged in an alternating manner, it is easily seen that $j-1$ of these $2-$cycles correspond to attracting directions. Thus, in this case, there exists a total of $1+(j-1)=j=\lfloor n/4\rfloor$ cycles of attracting directions at the origin.
   
  By the preceding analysis, we have the desired number of distinct critical orbits of $\sigma$ converging non-trivially to $p$ in each case. Finally, it is evident that the $\sigma-$cycles of immediate basins containing these $\delta_p$ critical orbits are disjoint and are contained in $\Int{\mathscr{K}(\sigma)}$.
\end{proof}

\subsection{Double point dynamics and critical points}

Let $p$ be a double point on $\partial\Omega$; i.e., $p$ is the point of touching (more precisely, tangential intersection) of two local non-singular pieces $\gamma^\pm$ of $\partial\Omega$. Suppose further that the curve germs $\gamma^\pm$ have a contact of order $n\geq 1$ at $p$. Note that since $\gamma^\pm$ do not cross each other at $p$, the order of contact is necessarily odd.

\subsubsection*{A convenient change of coordinates}
To study the dynamics of $\sigma$ near $p$, we may assume, possibly after performing a local conformal change of coordinates,  that $\gamma^-$ is contained in the real line, and $\gamma^+$ is a non-singular real-analytic curve contained in the (closure of the) upper half-plane and touching the real line at the origin. This change of coordinates conjugates the map $\sigma$, restricted to a neighborhood of $p$, to the piecewise anti-conformal map near the origin that acts as the complex conjugation map $\iota$ in the lower half-plane, and as the classical Schwarz reflection map $\iota^+$ in $\gamma^+$ above the curve $\gamma^+$ (see Figure~\ref{dp_second_iterate_fig}).

\subsubsection*{Order of contact, and Taylor series expansion}
Let us denote the (signed) curvature of $\gamma^+$ at a point $z_0\in\gamma^+$ by $\kappa(z_0)$, and its $r$-th  derivative (with respect to arc-length) by $\kappa^{(r)}(z_0)$, with the convention that $\kappa^{(0)}=\kappa$. The assumption that $\gamma^+$ and $\gamma^-$ have a contact of order $n=2l+1$ at $p$, for some  non-negative integer $l$, implies that if $n\geq 3$, then
$$
\kappa(0)=\cdots=\kappa^{(n-2)}(0)=0,\ \kappa^{(n-1)}(0)\neq 0;
$$
and if $n=1$, then $\kappa(0)\neq 0$. (Note that the order of contact is preserved by a conformal change of coordinates.)

For any point $z_0\in\gamma^+$, the Taylor series expansion of $\iota^+$ is given by
$$
\iota^+(z_0+\epsilon)= z_0+\overline{\epsilon}+\sum_{r\geq 2} b_r(z_0)\cdot \overline{\epsilon}^r,\ \epsilon\approx 0,
$$
where $b_r(z_0)$ is a polynomial function of $\kappa(z_0),\cdots,\kappa^{(r-2)}(z_0)$ with a trivial constant term (see \cite[\S 7]{Dav74} for details). In particular, we have that if $n\geq 3$, then
$$
b_2(0)=\cdots=b_n(0)=0,\ b_{n+1}(0)\neq 0;
$$
and if $n=1$, then $b_2(0)\neq 0$. Thus, the Taylor series expansion of $\iota^+$ at the origin is given by
\begin{equation}
\iota^+(\epsilon)= \overline{\epsilon}+ b_{n+1}(0)\cdot \overline{\epsilon}^{n+1} + O(\overline{\epsilon}^{n+2}),\ \epsilon\approx 0.
\label{schwarz_taylor_series_eqn}
\end{equation}
\smallskip

\subsubsection*{Attracting and repelling petals for the second iterate $\sigma^{\circ 2}$}
We refer the reader to Figure~\ref{dp_second_iterate_fig} for an illustration of the following discussion.
Note that the curve $\iota(\gamma^+)$ lies below the real line, while the curve $\iota^+(\gamma^-)$ lies above $\gamma^+$. Near the origin, the second iterate of $\sigma$ takes the form
$$
\sigma^{\circ 2}\equiv
\begin{cases}
    \iota\circ\iota^+,\quad \textrm{above}\ \iota^+(\gamma^-),\\
    \iota^+\circ\iota,\quad \textrm{below}\ \iota(\gamma^+).
\end{cases}
$$
By Equation~\eqref{schwarz_taylor_series_eqn}, we have $\left(\iota\circ\iota^+\right)(\epsilon)= \epsilon+ c\cdot \epsilon^{n+1} + O(\epsilon^{n+2})$ for $\epsilon\approx 0$, where $c\neq 0$ (cf. \cite[Proposition~4.6]{LMM25}). Thus, $\iota\circ\iota^+$ is a parabolic germ of multiplicity $n+1$ fixing the origin (cf. \cite[\ 10]{Mil06}). Clearly, the same is true for $ \iota^+\circ\iota$. Further, $\iota$ is a topological conjugacy between $ \iota\circ\iota^+$ and $ \iota^+\circ\iota$.

\begin{figure}[ht!]
\captionsetup{width=0.98\linewidth}
\begin{tikzpicture}
\node[anchor=south west,inner sep=0] at (0,0) {\includegraphics[width=0.6\textwidth]{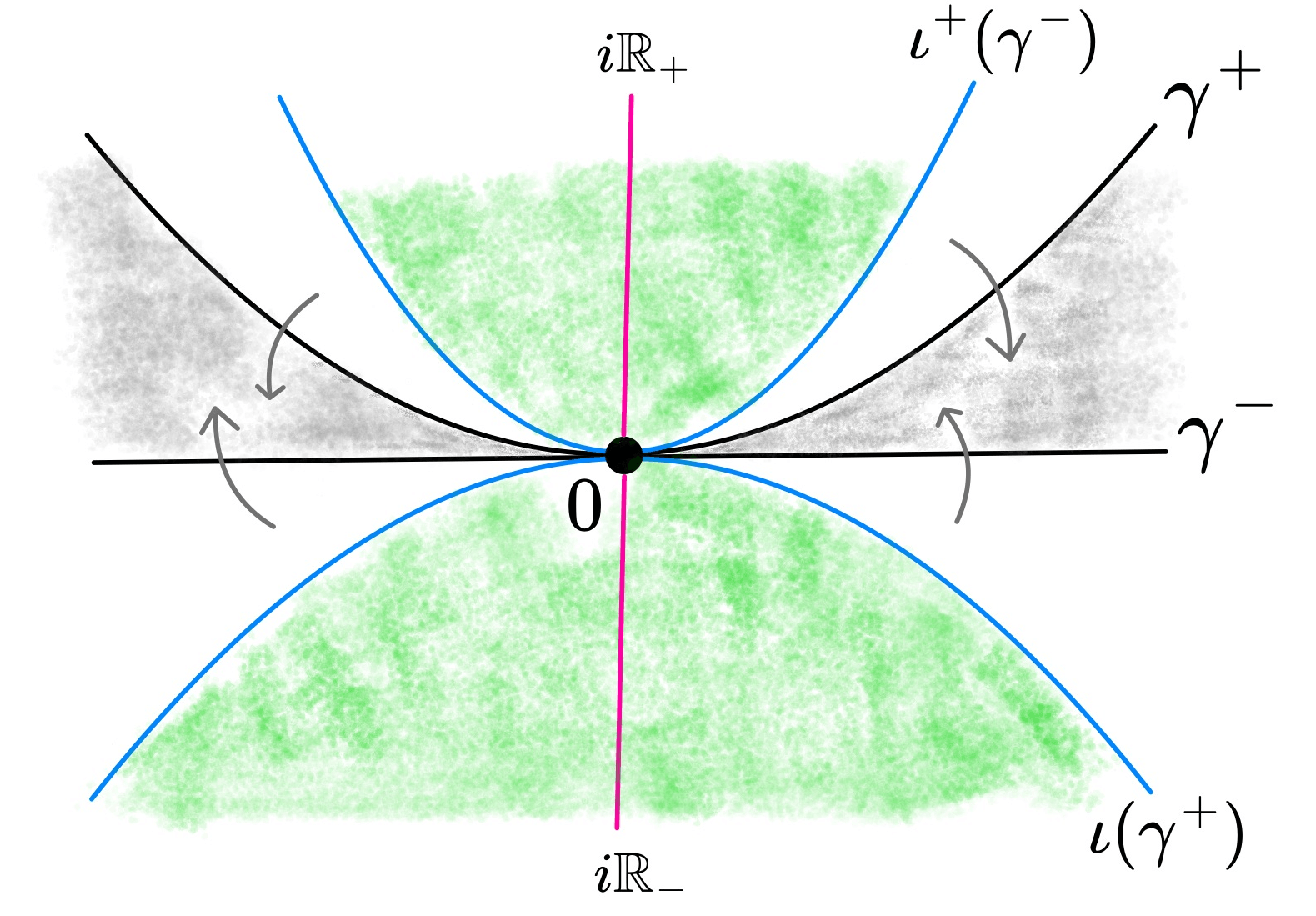}};
\end{tikzpicture}
\caption{The non-singular real-analytic curves $\gamma^\pm\subset\partial\Omega$ touch tangentially to form a double point at the origin. The gray shaded region is contained in $\Omega^\complement$. The second iterate $\sigma^{\circ 2}$ is defined in the green shaded region near the origin. The parabolic germs $\iota\circ\iota^+, \iota^+\circ\iota$ that define $\sigma^{\circ 2}$ near the origin (where $\iota, \iota^+$ are Schwarz reflections in the curves $\gamma^-, \gamma^+$, respectively) have the positive and negative imaginary axes (marked as $i\R_+, i\R_-$) as  invariant directions.}
\label{dp_second_iterate_fig}
\end{figure}

Let $\vec{v}$ be an attracting (respectively, repelling) direction for $\iota\circ\iota^+$ lying above $\iota^+(\gamma^-)$ (cf. \cite[\S 10]{Mil06}).  The fact that $\iota$ conjugates $\iota\circ\iota^+$ to $\iota^+\circ\iota$ implies that $\iota(\vec{v})$ is an attracting (respectively, repelling) direction for $\iota^+\circ\iota$ lying below $\iota(\gamma^+)$.
By the description of the map $\sigma^{\circ 2}$ given above, both $\vec{v}$ and $\iota(\vec{v})$ are attracting (respectively, repelling) directions for $\sigma^{\circ 2}$. The $\sigma^{\circ 2}-$invariant directions $\vec{v}, \iota(\vec{v})$ form a $2-$cycle under the antiholomorphic map $\sigma$.
Similarly, if $\cP$ is an attracting (respectively, repelling) petal of $\iota\circ\iota^+$ associated with $\vec{v}$, then $\iota(\cP)$ is an attracting (respectively, repelling) petal of $\iota^+\circ\iota$ associated with $\iota(\vec{v})$. Further, $\cP$ and $\iota(\cP)$ are also attracting (respectively, repelling) petals for $\sigma^{\circ 2}$.
The union of all these attracting and repelling petals of $\sigma^{\circ 2}$ cover the domain of definition of $\sigma^{\circ 2}$ near $0$. 

Thanks to the local dynamical description of $\sigma^{\circ 2}$ given above, the proof of Lemma~\ref{cusp_tiling_dyn_lem} applies verbatim to yield an analogous result for double points.

\begin{lem}\label{dp_tiling_dyn_lem}
Let $U\subset T^\infty(\sigma)$ be an open set such that there exists an inverse branch $g$ of $\sigma^{\circ 2}$ on $U$ with $g(U)\subset U$ and $p\in\partial g(U)$. Then, there exists $z\in U$ such that $U\ni g^{\circ k}(z)\xrightarrow{k\to\infty} p$.    
\end{lem}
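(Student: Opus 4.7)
The plan is to mimic the proof of Lemma~\ref{cusp_tiling_dyn_lem} line for line, replacing the cusp petal decomposition with the double-point petal decomposition established in the paragraphs immediately preceding the statement. First I would observe that since $g$ is an inverse branch of $\sigma^{\circ 2}$ defined on $U$, the image $g(U)$ is an open set on which $\sigma^{\circ 2}$ is well-defined, and we have $p\in\partial g(U)$ by hypothesis. Hence $g(U)$ is a neighborhood-like set accumulating on $p$, so it must intersect at least one of the attracting or repelling petals of $\sigma^{\circ 2}$ at the double point $p$, which together (with $p$ itself) cover the domain of definition of $\sigma^{\circ 2}$ near $0$.

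Next I would rule out the attracting case. Since $T^\infty(\sigma)$ is totally invariant under $\sigma$ and $U\subset T^\infty(\sigma)$, we have $g(U)\subset T^\infty(\sigma)$ as well. If $\cP$ were an attracting petal intersecting $g(U)$, then picking $w\in g(U)\cap \cP$ would yield an infinite forward orbit $\{\sigma^{\circ 2k}(w)\}$ converging nontrivially to $p$ while staying in $T^\infty(\sigma)$. But by definition every point of $T^\infty(\sigma)$ lands in the fundamental tile $T^0(\sigma)$ in finitely many steps, so its forward orbit under $\sigma^{\circ 2}$ cannot accumulate at the boundary point $p\in\mathcal{S}\subset\partial T^0(\sigma)$, a contradiction. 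Therefore the petal met by $g(U)$ must be a repelling petal $\cP$.

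Finally, on a repelling petal $\cP$ the relevant inverse branch of $\sigma^{\circ 2}$ is well defined and drives orbits to $p$; this is precisely the content of the petal description given just before Lemma~\ref{dp_tiling_dyn_lem} (attracting for the inverse is repelling for the forward map). Since $g$ is the unique inverse branch of $\sigma^{\circ 2}$ on $U$ and $g(U)$ accumulates at $p$ through $\cP$, I pick any point $z$ in the nonempty set $N:=U\cap\cP$ (shrinking $\cP$ if necessary so that $\cP\subset U$ near $p$, using that $g(U)$ already meets $\cP$ and iterating back to $U$ if needed). Then $\{g^{\circ k}(z)\}\to p$ as $k\to\infty$ by the repelling-petal asymptotics, and the hypothesis $g(U)\subset U$ yields $g^{\circ k}(z)\in U$ for all $k\in\N$ by induction, completing the proof.

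The only point that requires real care, and which I view as the main (minor) obstacle, is the last step of producing a point $z\in U$ whose backward orbit under $g$ actually lies inside the repelling petal from the start: this is handled exactly as in the cusp case by working with $N=U\cap\cP$, appealing to $g$-invariance of $U$, and invoking the standard Fatou-coordinate description of repelling petals for parabolic germs of multiplicity $n+1$ to guarantee that $g^{\circ k}(z)$ eventually enters and remains in $\cP$ while tending to $p$ tangent to the corresponding repelling direction.
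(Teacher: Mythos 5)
Your proposal is correct and takes essentially the same route as the paper: the paper's proof of this lemma simply states that the argument for Lemma~\ref{cusp_tiling_dyn_lem} applies verbatim using the petal decomposition of $\sigma^{\circ 2}$ at the double point, and your line-by-line adaptation—intersecting $g(U)$ with a petal, ruling out the attracting case by total invariance of $T^\infty(\sigma)$, and then using repelling-petal asymptotics together with $g$-invariance of $U$—matches that argument exactly.
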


\subsubsection*{Critical orbits associated with attracting petals}

\begin{lem}\label{higher_order_dp_lem}
    Set $\delta_p=\lfloor n/2\rfloor$. Then, there are $\delta_p$ distinct critical orbits of $\sigma$ that converge non-trivially to $p$. In particular, these critical orbits lie in $\Int{\mathscr{K}(\sigma)}$.
\end{lem}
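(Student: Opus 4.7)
The preceding discussion shows that near $p$ the holomorphic second iterate $\sigma^{\circ 2}$ is parabolic of multiplicity $n+1$, equal to $\iota\circ\iota^+$ on the upper region (above $\iota^+(\gamma^-)$) and to $\iota^+\circ\iota$ on the lower region (below $\iota(\gamma^+)$). The plan is to count the immediate attracting basins of $\sigma^{\circ 2}$ at $p$ that lie in its domain of definition, pair them into $\sigma$-cycles of period~$2$, and extract one critical orbit of $\sigma$ per cycle, in parallel with the proof of Lemma~\ref{higher_order_cusp_lem}.

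First, I identify $i\R_+$ as an invariant direction of $\iota\circ\iota^+$ (cf.\ Figure~\ref{dp_second_iterate_fig}). The involutive relation $\iota^+\circ\iota^+=\mathrm{id}$ applied to the expansion $\iota^+(\epsilon)=\bar\epsilon+b_{n+1}\bar\epsilon^{n+1}+O(\bar\epsilon^{n+2})$ forces $b_{n+1}+\overline{b_{n+1}}=0$, so $b_{n+1}=ib$ with $b\in\R\setminus\{0\}$; a direct Schwarz-function computation using $\gamma^+\subset\{\operatorname{Im}z\geq 0\}$ further pins down the sign of $b$. Substituting $\epsilon=it$ ($t>0$) into $(\iota\circ\iota^+)(\epsilon)=\epsilon+\overline{b_{n+1}}\,\epsilon^{n+1}+O(\epsilon^{n+2})$ then shows that $i\R_+$ is invariant and is attracting precisely when $(-1)^l b<0$, where $n=2l+1$.

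Next, the $2n$ invariant directions of $\iota\circ\iota^+$ are equally spaced by $\pi/n$ and alternate in type; since $i\R_+$ is one of them, exactly $n$ of these lie in the open upper half-plane, placed symmetrically about $i\R_+$. A short case analysis on the parity of $l$, combining the attractivity of $i\R_+$ just established with this alternation, yields exactly $\delta_p=\lfloor n/2\rfloor$ attracting directions in the upper half-plane. Because $\iota^+(\gamma^-)$ is tangent to $\R$ at the origin, each such direction produces an attracting petal of $\iota\circ\iota^+$ contained in the upper region, i.e., an immediate attracting basin $\cP$ of $\sigma^{\circ 2}$. Its image $\sigma(\cP)=\iota^+(\cP)$ lies in the lower region and, by the intertwining $(\iota^+\circ\iota)\circ\iota^+=\iota^+\circ(\iota\circ\iota^+)$, is itself an immediate attracting basin of $\sigma^{\circ 2}$. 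Hence $\{\cP,\sigma(\cP)\}$ is a period-$2$ $\sigma$-cycle of such basins, and running over the $\delta_p$ upper-region petals produces $\delta_p$ pairwise disjoint $\sigma$-cycles.

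Finally, Fatou's classical theorem applied to the holomorphic map $\sigma^{\circ 2}$ supplies a critical point of $\sigma^{\circ 2}$ in each immediate attracting basin, and the chain rule $(\sigma^{\circ 2})'=\sigma'\cdot(\sigma'\circ\sigma)$ guarantees that its $\sigma$-orbit contains a critical point of $\sigma$. The resulting $\delta_p$ critical orbits are pairwise distinct, converge non-trivially to $p$, and are contained in the open $\sigma$-invariant union of immediate basins, hence in $\Int{\mathscr{K}(\sigma)}$. The delicate step is the combinatorial count in the third paragraph, where the parity of $l$ and the sign of $(-1)^l b$ must be tracked in tandem to give the uniform answer $\lfloor n/2\rfloor$ across all odd $n$; the rest is a direct application of the standard local theory of holomorphic parabolic germs.
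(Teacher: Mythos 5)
Your overall strategy is the same as the paper's: exhibit $\sigma^{\circ 2}$ near $p$ as a parabolic germ of multiplicity $n+1$ (on each of the two regions where it is defined), count attracting directions, pair them into $\sigma$-cycles of period two using $\iota$, and extract one critical orbit per cycle by the Fatou-coordinate/Fatou-theorem argument of \cite[Theorem~10.15]{Mil06}. The involution argument showing $b_{n+1}\in i\R$, the identification of $i\R_\pm$ as invariant directions, and the alternation of attracting/repelling directions also all match the paper. Where you diverge is in claiming to pin down whether $i\R_+$ is attracting — you assert that "a direct Schwarz-function computation" determines the sign of $b$, and then conclude \emph{exactly} $\lfloor n/2\rfloor$ attracting directions in the upper half-plane. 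That sign computation is never carried out, so this step is a genuine gap as written; and whether the sign always cooperates to give exactly $\lfloor n/2\rfloor$ (rather than $\lfloor n/2\rfloor+1$) is not obvious and would require a careful relation between $b_{n+1}(0)$ and $\kappa^{(n-1)}(0)$ with a fixed sign convention. The paper deliberately avoids this: it treats both possibilities for the type of $i\R_+$ (together with the parity of $n\bmod 4$) and observes that the count is \emph{at least} $\lfloor n/2\rfloor$ in every case, which is all the lemma needs. Your case analysis in fact yields the same "at least $\lfloor n/2\rfloor$" bound regardless of the sign of $b$, so the lemma's conclusion still follows from what you have written; but the "exactly" claim and the promissory note about the sign of $b$ should either be substantiated or replaced by the paper's parity/type case analysis, which makes the unresolved sign harmless. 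One minor additional point: $\sigma^{\circ 2}$ is only a germ near $p$, not a global rational map, so invoking "Fatou's classical theorem" directly is slightly too casual; the paper is careful to first build Fatou coordinates on the petals and then run the argument of \cite[Theorem~10.15]{Mil06} in this local setting, and you should do the same.
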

\begin{proof}
Since $n=2l+1$ is odd, the germ $\iota\circ\iota^+$ has $n$ invariant directions above $\iota^+(\gamma^-)$, and the germ $\iota^+\circ\iota$ has $n$ invariant directions below $\iota(\gamma^+)$.
Further, the positive and negative imaginary axes $i\R_+$ and $i\R_-$ are invariant directions for $\sigma^{\circ 2}$.
We will use this information to count the number of 2-cycles of attracting directions of $\sigma$. We consider the following two cases.
\smallskip

\noindent\textbf{Case I: $n=4j+1$, for some $j\geq0$.}
In this case, if $i\R_+$ is an attracting direction for $\sigma^{\circ 2}$, then so is $i\R_-$, and there are
$4j/2+1=2j+1$
attracting directions for $\sigma^{\circ 2}$ above as well as below the real axis, hence
$2j+1=\lfloor n/2\rfloor +1$
many $2$-cycles (under $\sigma$) corresponding to them. On the other hand, if $i\R_+$ is a repelling direction for $\sigma^{\circ 2}$, then so is $i\R_-$, and there are
$4j/2=2j$
attracting directions for $\sigma^{\circ 2}$ above as well as below the real axis, and hence
$2j=\lfloor n/2\rfloor$
many 2-cycles (under $\sigma$) corresponding to them.
\smallskip

\noindent\textbf{Case II: $n=4j+3$, for some $j\geq0$.}
In this case, if $i\R_+$ is an attracting direction for $\sigma^{\circ 2}$, then there are
$4j/2+1=2j+1$
attracting directions for $\sigma^{\circ 2}$ above as well as below the real axis, and hence
$2j+1=\lfloor n/2\rfloor$
many 2-cycles (under $\sigma$) corresponding to them. On the other hand, if $i\R_+$ is a repelling direction for $\sigma^{\circ 2}$, then there are
$4j/2+2=2j+2$
attracting directions for $\sigma^{\circ 2}$ above as well as below the real axis, and hence
$2j+2=\lfloor n/2\rfloor +1$
many 2-cycles (under $\sigma$) corresponding to them.
\smallskip

In all cases, we have at least $\lfloor n/2\rfloor$ many 2-cycles of attracting directions for $\sigma$. By the proof of \cite[Theorem~10.15]{Mil06}, each such $2$-cycle of attracting directions has an associated $2$-cycle of immediate attracting basins containing a critical value of $\sigma$. This yields the
desired number of distinct critical orbits of $\sigma$ converging non-trivially to $p$, and it is evident that the $\sigma-$cycles of immediate basins containing these $\delta_p$ critical orbits are disjoint and are contained in $\Int{\mathscr{K}(\sigma)}$.
\end{proof}

\section{Bound on connectivity and double points}\label{special_case_sec}

In this section, we will prove the following weaker version of Theorem~\ref{main_thm_1}.

\begin{theorem}\label{special_case_thm}
$$
\mathrm{conn}(\Omega) + \# D \leq 2d_f-4.
$$
\end{theorem}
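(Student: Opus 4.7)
The strategy is to locate sufficiently many critical points of the uniformizing meromorphic map $f$ and then combine the count with the Riemann--Hurwitz identity $\#_m \mathrm{crit}(f) = 2d_f + 2k - 4$ from Proposition~\ref{schwarz_crit_pnt_prop}, where $k := \mathrm{conn}(\Omega)$. The target inequality $k + \#D \leq 2d_f - 4$ is equivalent to $\#_m \mathrm{crit}(f) \geq 3k + \#D$, so the goal reduces to exhibiting, with multiplicity and distinctly, three critical points of $f$ forced by each component of $\Omega^\complement$ plus one forced by each double point. Following the organization of Section~\ref{organize_subsec}, I would split the argument into three cases according to the nature of $\partial\Omega$.

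In the \emph{non-singular case} ($\#D = 0$), for each component $K$ of $\Omega^\complement$, I would study the escape set $\widehat{T_K^\infty}(\sigma)$ and the branched covering $\sigma: \sigma^{-1}(K^\circ) \to K^\circ$. Applying Riemann--Hurwitz to $f|_{W^-}$ restricted to $f^{-1}(K^\circ) \cap W^-$, together with a modulus-of-annulus argument that compares the nested preimages $\{\sigma^{-n}(K^\circ)\}_{n\geq 0}$ against the planar topology of $W^-$ (Euler characteristic $2-k$, with $k$ boundary components), would yield three critical points of $f$ in $W^-$ per component. Critical points associated with different components are distinct because the asymptotic escape destination of the corresponding $\sigma$-orbit distinguishes them. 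Summing over components gives the required $3k$ critical points.

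For quadrature domains with \emph{cusps but no double points}, each cusp contributes a (simple) critical point of $f$ on $P$ by Section~\ref{singularity_subsec}, Case~I; these lie in $P$ and are automatically distinct from critical points in $W^-$. The attracting parabolic petals at each cusp harbor critical orbits of $\sigma$ in the non-escaping set by Lemma~\ref{higher_order_cusp_lem}, and do not interfere with the escape-dynamics critical points from the first case; a localization argument (shrinking the modulus annuli away from cusp neighborhoods) preserves the three-per-component count. In the presence of \emph{double points}, I would use Lemma~\ref{dp_tiling_dyn_lem} to extract, for each double point $p$, an orbit of an inverse branch of $\sigma^{\circ 2}$ in the escape set converging to $p$ along a repelling direction, and a refinement of the modulus argument to produce an additional critical point of $f$ in $W^-$ attached to $p$. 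Distinctness across double points and from the component-associated critical points follows from the disjointness of the local repelling petals at distinct singular points and from the mutually incompatible asymptotic behaviour of the three families of orbits.

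The main obstacle is the ``three per component'' count in the non-singular case: unlike the classical Fatou-type argument, which only yields one critical point per attracting basin, the sharper linear bound requires a delicate hyperbolic-geometry analysis that simultaneously exploits the planarity of $W^\pm$ (each a $k$-holed sphere) and the combinatorics of the preimage tree $\{\sigma^{-n}(K^\circ)\}_{n \geq 0}$. Verifying that the double-point critical points are distinct from the component-associated ones is the secondary technical challenge, since several double points can lie on the boundary of a single component of $\Omega^\complement$; here a careful separation of local repelling sectors at each $p$ is needed to keep the bookkeeping consistent.
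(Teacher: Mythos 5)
Your reduction is sound: $k + \#D \leq 2d_f-4$ is indeed equivalent, via Proposition~\ref{schwarz_crit_pnt_prop}, to locating $3k + \#D$ critical points of $f$ with multiplicity, and the paper likewise proceeds by attaching critical points to pieces of the escaping dynamics. Your sketch of the non-singular case also matches the paper's strategy in outline: Riemann--Hurwitz on the first preimage of $K$ gives two critical points, and (assuming no further ones) a nested-annuli modulus estimate forces $\mathscr{K}(\sigma)$ to collapse to a point, contradicting the existence of a superattracting fixed point with an open basin.

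However, the singular cases contain genuine gaps. For a component $K$ with cusps but no double points, you propose to ``preserve the three-per-component count'' via a ``localization argument (shrinking the modulus annuli away from cusp neighborhoods).'' This does not work: as soon as a cusp sits on $\partial K$, the tiles of rank $n\geq 1$ adjacent to $K$ are no longer annuli (they are simply connected pieces pinched at the cusp), so there is no annulus modulus to shrink. The paper instead switches to a qualitatively different argument (Lemmas~\ref{two_cusp_lem} and~\ref{one_cusp_lem}): one builds a strictly contracting inverse branch $g$ of $\sigma^{\circ 2}$ on a simply connected hyperbolic piece of $T_{K_0}^\infty(\sigma)$, uses the parabolic repelling petals (Lemma~\ref{cusp_tiling_dyn_lem}) to produce $g$-orbits converging to two distinct boundary points (or, in the one-cusp case, to two distinct accesses to the cusp, via $p$ being a cut-point of $\partial V$), and contradicts the Schwarz--Pick contraction. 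That is not a localized version of the modulus argument but a replacement for it.

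For double points you propose to attach ``one additional critical point per double point via a refinement of the modulus argument.'' This is also not how the count goes. A double point is a cut-point of a single component $K$, and $\Int K$ decomposes into $\alpha+1$ pieces (if $\alpha = \#D_K$), each with its own escaping dynamics. The paper (Lemmas~\ref{tree_counting_lem},~\ref{droplet_tree_lem},~\ref{dp_lem}) encodes this decomposition as a tree $\mathscr{T}(K)$ and proves that each valence-$1$ vertex contributes at least $2$ critical points and each valence-$2$ vertex at least $1$; the graph-theoretic inequality $2n_1 + n_2 \geq n + 3$ then delivers $\#D_K + 3$ in total. Nothing in your proposal accounts for this combinatorial redistribution; one cannot assign critical points directly ``to the double point'' because the escaping dynamics only sees the pieces of $\Int K$, not the double points themselves. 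The per-piece contributions are again obtained by hyperbolic contraction (with Lemma~\ref{dp_tiling_dyn_lem} replacing Lemma~\ref{cusp_tiling_dyn_lem}), not by modulus estimates. In short: your non-singular argument is essentially right, but your singular arguments fail as stated, and the missing ingredient is the tree lemma together with the hyperbolic-contraction scheme that replaces the modulus argument once singular points appear on $\partial K$.
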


\subsection{From individual contributions to global upper bound}\label{component_wise_reduction_subsec}

The crucial idea in the proof of Theorem~\ref{special_case_thm} is to associate critical points with each component of $\Omega^\complement$, as stated in the following key proposition.
Given a component $K$ of $\Omega^\complement$, we denote the connected components of its `desingularization' $K\setminus\mathcal{S}$ by $K_0^0,\cdots,K_0^n$. Recall the notation $T_{K_0^j}^\infty(\sigma)$ from Section~\ref{schwarz_dyn_basic_subsec}.

\begin{prop}\label{individual_contribution_prop}
Every component $K$ of $\Omega^\complement$ accounts for at least $(\# D_K + 3)$ critical points of $f$, where $\# D_K$ is the number of double points on $\partial K$. More precisely, there are at least $(\# D_K + 3)$ critical points of $f$ (counted with multiplicity) in $\displaystyle f^{-1}(\partial K\cap\mathcal{S})\sqcup\bigsqcup_{j=0}^n f^{-1}(T_{K_0^j}^\infty(\sigma))$.
\end{prop}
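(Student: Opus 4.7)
The strategy is to produce the $\#D_K + 3$ critical points by a Riemann--Hurwitz analysis of the inverse dynamics of $\sigma$ on each tiling-set component $T_{K_0^j}^\infty(\sigma)$, with an additional contribution from the parabolic dynamics at double points supplied by Lemma~\ref{dp_tiling_dyn_lem}. The advantage over the Lee--Makarov approach is that no quasiconformal modification of $\sigma$ is performed, so the critical points are located precisely inside the sets $f^{-1}(T_{K_0^j}^\infty(\sigma))$.

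Concretely, for each desingularized piece $K_0^j$ of $K\setminus\mathcal{S}$, I would analyse the nested family of tiles $\sigma^{-n}(K_0^j)\subset T_{K_0^j}^\infty(\sigma)$. Each step $\sigma\colon\sigma^{-n-1}(K_0^j)\to\sigma^{-n}(K_0^j)$ is an anti-holomorphic branched cover whose degree is controlled by Proposition~\ref{schwarz_deg_prop}. A Riemann--Hurwitz computation along this tower collects the critical points of $\sigma$ lying in $\Omega\cap T_{K_0^j}^\infty(\sigma)$, and via the algebraic description $\sigma = f\circ\eta\circ(f|_{\overline{W^+}})^{-1}$ of Proposition~\ref{schwarz_alg_prop} these transfer to critical points of $f$ in $f^{-1}(T_{K_0^j}^\infty(\sigma))\cap W^-$ (recall that $W^+$ carries no critical points of $f$). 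Summing these contributions across $K_0^0,\ldots,K_0^n$, and exploiting $\eta$-equivariance to compare with the topology of the region of $\Sigma$ sitting over $\overline K$, I expect to obtain a base count of $3$ critical points attributable to $K$.

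For the $\#D_K$ extra critical points, Lemma~\ref{dp_tiling_dyn_lem} produces, at each double point $p\in\partial K$, tiles of some $T_{K_0^j}^\infty(\sigma)$ accumulating on $p$ along repelling petals of $\sigma^{\circ 2}$; each such accumulation injects, after a suitable truncation near $p$, an extra boundary loop into the Riemann--Hurwitz ledger above and therefore forces one extra critical point of $f$. The principal obstacle is making the Euler-characteristic bookkeeping tight enough to yield precisely "$+3$" at the base and exactly one per double point: without good control on the topology of the iterated preimages $\sigma^{-n}(K_0^j)$, Riemann--Hurwitz gives only weak bounds. This is where the planar hyperbolic-geometry and modulus arguments flagged in Section~\ref{proof_comments_subsec} enter, ruling out spurious loops in the inverse-orbit tree. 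A secondary subtlety is that critical points extracted for different $K_0^j$ or different double points must be genuinely distinct, but this is built into the statement via the disjointness of the sets $f^{-1}(T_{K_0^j}^\infty(\sigma))$ in the tiling-set partition.
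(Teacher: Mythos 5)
Your proposal captures the non-perturbative spirit of the paper's argument (Riemann--Hurwitz counting along inverse tile towers, supplemented by hyperbolic-geometry arguments to rule out the ``no critical point'' scenario), and you correctly anticipate that modulus/hyperbolic-contraction estimates must enter where Riemann--Hurwitz alone is slack. However, there are two genuine gaps.

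First, your plan omits the contribution of cusps, which in the paper's proof is the source of a significant fraction of the base count ``$+3$''. A cusp $p\in\partial K$ is a critical value of $f$ whose corresponding critical point lies on $P = \partial W^\pm$, \emph{not} in $W^-$; these points contribute to $f^{-1}(\partial K\cap\mathcal{S})$ in the statement, and they are invisible to any Riemann--Hurwitz count performed on the tiles $\sigma^{-n}(K_0^j)\subset T^\infty_{K_0^j}(\sigma)$, precisely because they are not critical points of $\sigma$ at all (cf.\ Section~\ref{critical_subsec}). The paper's proof therefore splits into cases by the number of cusps: with two cusps you already have two critical points of $f$ on $P$ and only need one more from the escaping dynamics (Lemma~\ref{two_cusp_lem}); with one cusp you get one from $P$, one from Riemann--Hurwitz on the rank-one tile, and one more from a hyperbolic-contraction contradiction using a cut-point/prime-end argument (Lemma~\ref{one_cusp_lem}); with no singularities you get all three from the escaping dynamics via a Branner--Hubbard modulus estimate (Lemma~\ref{non_sing_lem}). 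Your single uniform Riemann--Hurwicz-on-tiles scheme would underperform in the cusp cases.

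Second, and more seriously, the mechanism you propose for the ``$+\#D_K$'' term does not work as stated. You suggest that the accumulation of tiles on a double point along repelling petals ``injects an extra boundary loop into the Riemann--Hurwitz ledger'' and hence forces an extra critical point. This is not how the extra critical points arise: a double point on $\partial K$ does not add a boundary component to any tile, so no Euler-characteristic term changes. The paper instead associates to $K$ a tree $\mathscr{T}(K)$ whose vertices are the components of $\Int{K}$ (equivalently, the $K_0^j$) and whose edges are the double points (Lemma~\ref{droplet_tree_lem}); it shows via the hyperbolic-contraction argument (applied with Lemma~\ref{dp_tiling_dyn_lem} supplying two orbit endpoints at two distinct double points) that a valence-$1$ vertex yields $2$ critical points and a valence-$2$ vertex yields $1$; and the purely combinatorial Lemma~\ref{tree_counting_lem}, $2n_1 + n_2 \geq n + 3$, then delivers the bound $\#D_K + 3$. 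Without this tree bookkeeping, it is unclear how you would aggregate the per-$K_0^j$ counts into the precise total $\#D_K + 3$ rather than something weaker. You also do not verify that the critical points extracted for different $K_0^j$ are disjoint from those on $P$ coming from cusps; this is true (one lives in $W^-$, the other on $P$) but must be stated.

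In short: the broad strategy is aligned with the paper, but the proposal lacks the case decomposition by cusps, misidentifies the source of the $\#D_K$ surplus, and has no analogue of the tree-combinatorial Lemma~\ref{tree_counting_lem} that makes the count close.
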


Most of this section will be devoted to the proof of Proposition~\ref{individual_contribution_prop}. Before we delve into the proof, let us observe that Theorem~\ref{special_case_thm} is an easy consequence of this proposition.

\begin{proof}[Proof of Theorem~\ref{special_case_thm}]\label{pf_special_case_thm}
We recall the notation $k=\mathrm{conn}(\Omega)$.
By Proposition~\ref{schwarz_crit_pnt_prop} and Proposition~\ref{individual_contribution_prop},
\begin{align*}
\sum\limits_{\substack{\mathrm{components\ K}\\ \mathrm{of\ \Omega^\complement}}} & (\# D_K + 3) \leq 2d_f+2k-4\\
\implies &\# D + 3k \leq 2d_f+2k-4\\
\implies &\# D + k \leq 2d_f-4.
\end{align*}
This completes the proof.
\end{proof}

\subsection{Associating critical points with each component of $\Omega^\complement$}\label{crit_points_per_component_subsec}

The proof of Proposition~\ref{individual_contribution_prop} will be divided into various cases. For clarity of exposition, we record these cases as separate statements. The background on local dynamics of Schwarz reflections near cusps and double points from Section~\ref{cusp_dp_dyn_sec} will be extensively used in the following proofs.

\subsubsection{The non-singular case}

\begin{lem}\label{non_sing_lem}
Let $K$ be a component of $\Omega^\complement$ such that $\partial K$ is non-singular. Then, there are at least $3$ critical points of $f$ (counted with multiplicity) in~$\displaystyle f^{-1}(T_{K}^\infty(\sigma))$.
\end{lem}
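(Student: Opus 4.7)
The plan is to apply the Riemann--Hurwitz formula to the proper, degree-$d_f$ branched covering $f \colon f^{-1}(V) \to V$, where $V := T_K^\infty(\sigma)$ is the escape tile containing $K$. Since the ramification of this restricted covering counts exactly the critical points of $f$ in $f^{-1}(V)$ with multiplicity, the lemma reduces to proving the topological inequality $d_f\cdot \chi(V) - \chi(f^{-1}(V)) \geq 3$.

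My first task will be to show that $V$ is simply connected, so $\chi(V) = 1$. Since $\partial K$ is non-singular, $K$ is a closed topological disc in $\widehat{\C}$ and $P_K$ is a single smooth Jordan curve in $\Sigma$. I plan to realize $V$ as the increasing union $V = K \cup \bigcup_{n\geq 1}(\sigma^{-n}(K)\cap V)$, where each term successively adjoins annular preimages along the outer boundary of the previous approximation. A \emph{modulus argument} is key here: comparing the moduli of the successive annular preimages (which form a series summing to the finite $\mathrm{mod}(V\setminus K)$) forces the local degree $d^V$ of the escape dynamics of $\sigma$ on $V$ to satisfy $d^V \geq 2$, thereby guaranteeing geometric convergence and topological tameness of the iterative construction, and in particular the simple connectivity of $V$.

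My second task will be to bound $\chi(f^{-1}(V))$ from above by $d_f - 3$. Decompose $f^{-1}(V) = A^+ \sqcup P_K \sqcup A^-$ with $A^\pm := f^{-1}(V) \cap W^\pm$. Since $f|_{W^+}$ is a conformal isomorphism onto $\Omega$, the piece $A^+$ is conformally equivalent to the annulus $V\cap\Omega$, so $\chi(A^+) = 0$; the circle $P_K$ also contributes $0$ to the Euler characteristic by additivity. The substantial contribution comes from $A^-$, whose topology I will analyze via the proper, degree-$d_f$ branched covering $f \colon f^{-1}(K) \to K$: here $f^{-1}(K)$ is a compact subsurface of $\Sigma$ whose boundary, in the non-singular case, generically consists of $d_f$ Jordan circles (one being $P_K$, the remaining $d_f - 1$ lying in $W^-$), together with the corresponding degree-$d_f$ branched covering of the annulus $V\cap\Omega$; summing the contributions from these two pieces, glued along circles, should yield $\chi(f^{-1}(V)) \leq d_f - 3$. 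Substituting into the Riemann--Hurwitz identity then gives $R_V \geq 3$ as required.

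The \textbf{main obstacle} I anticipate is Step~1 (the simple connectivity of $V$): the inductive adjunction argument requires both the non-singularity of $\partial K$ and precise modulus control to rule out pathological wrapping of the annular preimages. A subtler technical point is that critical points of $f$ in $f^{-1}(V)$ correspond, via $w \mapsto f(\eta(w))$, not only to critical points of $\sigma$ in $V\cap\Omega$ but also to critical points of $\sigma$ lying in other components of $\widehat{T_K^\infty}\cap\Omega$ whose $\sigma$-image falls in $V$ (for instance, poles of $\sigma$ mapping to $\infty\in K$ in the bounded case); carefully accounting for these contributions will be essential for the Euler characteristic estimate in Step~2.
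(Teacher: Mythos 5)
Your proposal attempts a direct Riemann--Hurwitz computation on $f\colon f^{-1}(V)\to V$, whereas the paper's argument is fundamentally a proof by contradiction, and I believe your first step contains a genuine gap.

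The claim that $V=T_K^\infty(\sigma)$ is simply connected is not established, and I suspect it is false in general. You invoke a ``modulus argument'' to force tameness of the inductive construction, but as described it is circular: summing moduli of the annular preimages requires already knowing that $V\setminus K$ is an annulus, and that the tiles assemble as nested annuli rather than wrapping around components of the non-escaping set. In the paper's proof the modulus estimate plays a completely different role: it lives \emph{inside} a contradiction hypothesis (no further critical points in $V$), under which each tile $A_n$ maps isomorphically to $A_{n-1}$, hence $\mathrm{mod}(A_n)=\mathrm{mod}(A_0)$ for all $n$, so $A_\infty$ has \emph{infinite} modulus and $V$ is the sphere minus a single point. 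Unconditionally, the tiles can have degree $\geq 2$, the moduli shrink geometrically, and nothing prevents $T_K^\infty(\sigma)$ from having multiple (or infinitely many, even a Cantor set of) complementary components, in which case $\chi(V)$ is $\leq 0$ or undefined and your Riemann--Hurwitz count does not close.

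The second step has a parallel problem: $\chi(A^+)=0$ asserts that $V\cap\Omega$ is an annulus, which again is not known a priori (it is exactly what holds \emph{under} the contradiction hypothesis), and the bound $\chi(f^{-1}(V))\leq d_f-3$ is asserted only ``generically'' without a proof handling the cases where boundary circles coincide, where components merge, or where $\chi(V)<1$. Since Riemann--Hurwitz gives $\#_m\mathrm{crit}=d_f\chi(V)-\chi(f^{-1}(V))$, bounding $\chi(f^{-1}(V))$ from above without first locating critical points is essentially equivalent to the lemma itself, so an unconditional topological argument for this inequality would need a genuinely new idea. Finally, note you omit the closing contradiction that drives the paper's proof: if $\widehat{\C}\setminus V=\{p\}$, then $p$ is a totally invariant superattracting fixed point of $\sigma$, whose basin would be an open subset of $\mathscr{K}(\sigma)=\{p\}$, a contradiction. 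Without some analogue of this step, the argument cannot terminate.
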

\begin{remark}
We note that Lemma~\ref{non_sing_lem} (combined with the proof of Theorem~\ref{special_case_thm} given above) proves the desired upper bound $2d_f-4$ on $\mathrm{conn}(\Omega)$ in the case when $\partial\Omega$ is non-singular. This gives an alternative proof of the same result first established in \cite[\S 4]{LM16} using quasiconformal surgery techniques.
\end{remark}
\begin{proof}
Since $\partial K$ is non-singular, there exists a relative neighborhood $N_\epsilon(\partial K)\cap\overline{\Omega}$ of $\partial K$ (in $\overline{\Omega}$) such that for each $z\in N_\epsilon(\partial K)\cap\overline{\Omega}$, we have that $\sigma(z)\in K$  (cf. \cite[\S 4]{LM16}).
Let $A_0$ be the component of $\sigma^{-1}(K)$ containing this relative neighborhood $N_\epsilon(\partial K)\cap\overline{\Omega}$. By construction, $\partial K\subset\partial A_0\subset A_0$ (see Figure~\ref{fig:4.1}).
\begin{figure}[ht]
\captionsetup{width=0.98\linewidth}
    \centering
    \includegraphics[width=0.5\linewidth]{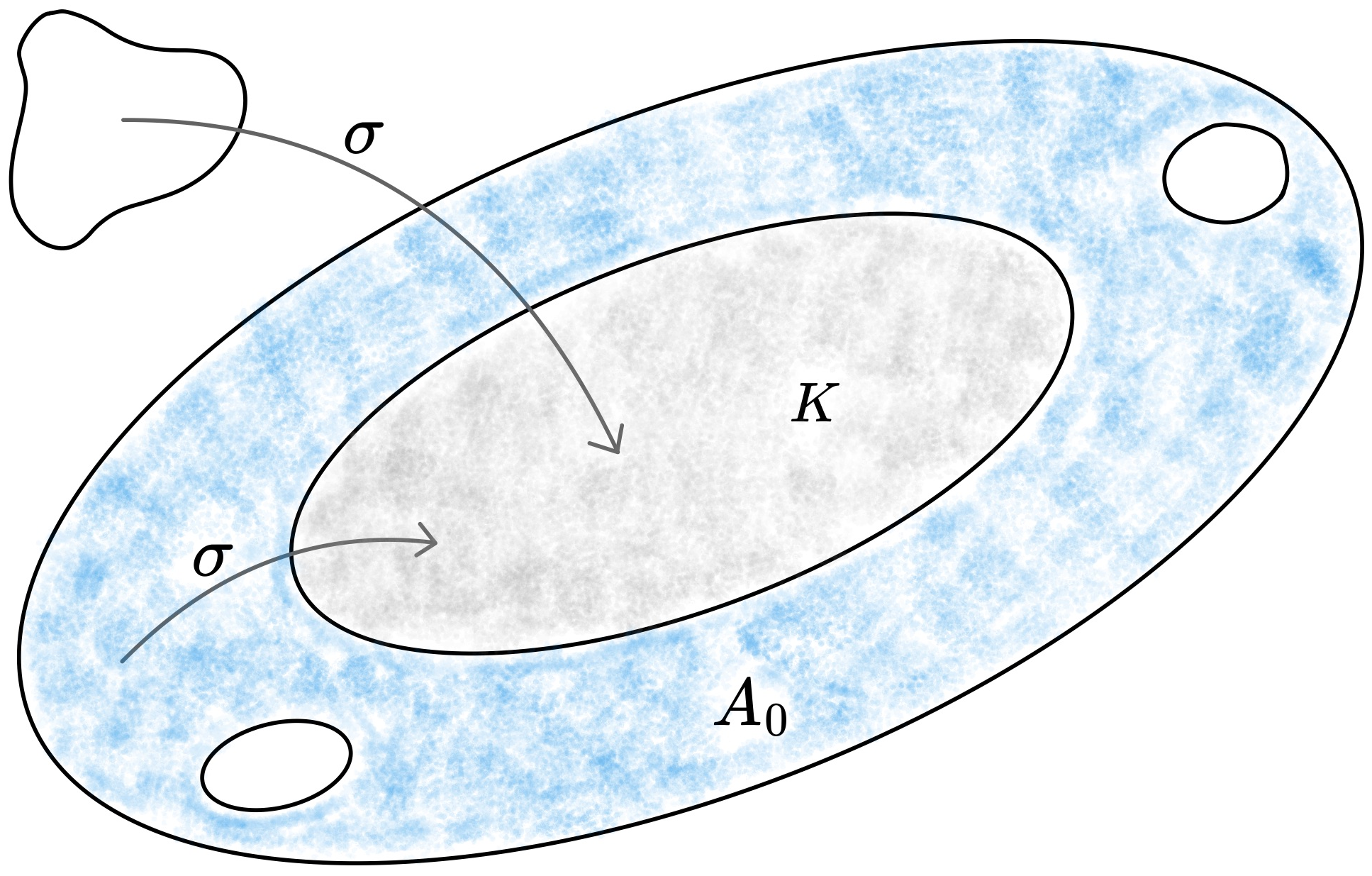}
    \caption{Depicted are the component $K$ of $\Omega^\complement$ and the neighboring component $A_0$ of $\sigma^{-1}(K)$.}
    \label{fig:4.1}
\end{figure}

\textbf{Step I: At least two critical points of $f$ over $K$.} We claim that $A_0$ is not simply connected. To see this, we assume otherwise and note that under this assumption, $A_0$ would be a topological disk. As the boundary $\partial K$ is contained in $\partial A_0$, we must have that $A_0=\widehat{\C}\setminus\Int{K}=\overline{\Omega}$ and hence $\sigma^{-1}(\Omega)=\emptyset$. This is a contradiction to the hypothesis that $\deg\left(\sigma:\sigma^{-1}(\Omega)\to\Omega\right)=d_f - 1 \geq 2$.

Since $A_0$ is not simply connected and $K$ is a closed Jordan disk, it follows that $\sigma : \Int{A_0} \to \Int{K}$ is a branched covering that is not a conformal isomorphism, and hence $d_{A_0} := \deg\left(\sigma: \Int{A_0} \to \Int{K}\right) \geq 2$. By the Riemann-Hurwitz formula,
\begin{equation}
\#_m\ \mathrm{crit}(\sigma\vert_{\Int{A_0}}) = d_{A_0}\cdot\chi(\Int{K}) - \chi(\Int{A_0}) \geq 2 - \chi(\Int{A_0}) \geq 2,
\label{rh_1_eqn}
\end{equation}
as $\chi(\Int{K}) = 1$ and $\chi(\Int{A_0}) \leq 0$ (here and elsewhere, $\#_m$ stands for the number of critical points counted with multiplicity).
Thus, $\sigma$ has at least two critical points in $\Int{A_0}$. This provides us with at least two critical points of $f$ in $f^{-1}(\Int{K})$. 
\smallskip

\textbf{Step II: Construction of annular rank $n$ tiles and successive homeomorphisms.} 
By way of contradiction, let us assume that there is no further critical point of $f$ in $f^{-1}(T_K^\infty(\sigma))$; i.e, there is no further critical point of $\sigma$ in $T_K^\infty(\sigma)$.

If $A_0$ is not an annulus, then $\chi(\Int{A_0}) \leq -1$ and it follows from Equation~\eqref{rh_1_eqn} that $\#_m\ \mathrm{crit}(\sigma\vert_{\Int{A_0}}) \geq 3$; i.e., we are done. So, we consider the case that $A_0$ is an annulus. Again, if $d_{A_0} \geq 3$, then $\#_m\ \mathrm{crit}(\sigma\vert_{A_0}) \geq 3$ and are done. So, we only need to consider the case $d_{A_0} = 2$. Under this assumption, $\sigma: A_0 \xrightarrow{2:1} K$ is a branched covering map and hence $\sigma: \partial A_0 \xrightarrow{2:1} \partial K$ is an unbranched covering of degree two (see Figure~\ref{fig:4.2}).

Let $A_1$ be the component of $\sigma^{-1}(A_0)$ containing the outer boundary of $A_0$; i.e., $\partial A_0 \setminus \partial K \subseteq \partial A_1$. Since $A_0$ is an annulus, it follows from the Riemann-Hurwitz formula that $A_1$ is not simply connected. If $A_1$ is not an annulus, then
\begin{equation}
\#_m\ \mathrm{crit}(\sigma\vert_{\Int{A_1}}) = d_{A_1}\cdot\chi(\Int{A_0}) - \chi(\Int{A_1}) = - \chi(\Int{A_1}) \geq 1.
\label{rh_2_eqn}
\end{equation}
This would give a critical value of $\sigma$, and hence of $f$, in $\Int{A_0}$ and we are done. So, we consider $A_1$ to be an annulus. We will argue that in this case, $\sigma: A_1\rightarrow A_0$ is an isomorphism. Indeed, the boundary of the annulus $A_0$ has two (non-singular) components: $\partial K$ and $\partial A_0 \setminus \partial K$, and $\sigma$ is the identity on $\partial K$, so $\sigma : \partial K \to \partial K$ has degree $1$. As  $\sigma: \partial A_0 \xrightarrow{2:1} \partial K$ is a covering of degree two, the map $\sigma$ must carry $\partial A_0 \setminus \partial K$ homeomorphically onto $\partial K$. Further, as $\sigma: \Int{A_1} \rightarrow \Int{A_0}$ is an annulus-to-annulus covering of degree $d_{A_1}$, the degree of $\sigma$ restricted to each of the components of $\partial A_1$ is also $d_{A_1}$. It follows that $d_{A_1}=1$ (see Figure~\ref{fig:4.2}).

Let $A_2$ be the component of $\sigma^{-1}(A_1)$ containing the outer boundary of $A_1$; i.e., $\partial A_1 \setminus \partial A_0 \subseteq \partial A_2$. As before, the absence of critical points of $\sigma$ in $A_2$ forces $A_2$ to be an annulus with non-singular boundary components. This fact, in turn, implies that the annulus-to-annulus covering $\sigma: A_2 \rightarrow A_1$ is an isomorphism (see Figure~\ref{fig:4.2}).

Proceeding thus, we obtain a sequence of isomorphisms $\{\sigma : A_n \to A_{n-1}\}_{n \geq 1}$ as depicted in Figure~\ref{fig:4.2}.
\begin{figure}[ht]
\captionsetup{width=0.98\linewidth}
    \includegraphics[width=0.8\linewidth]{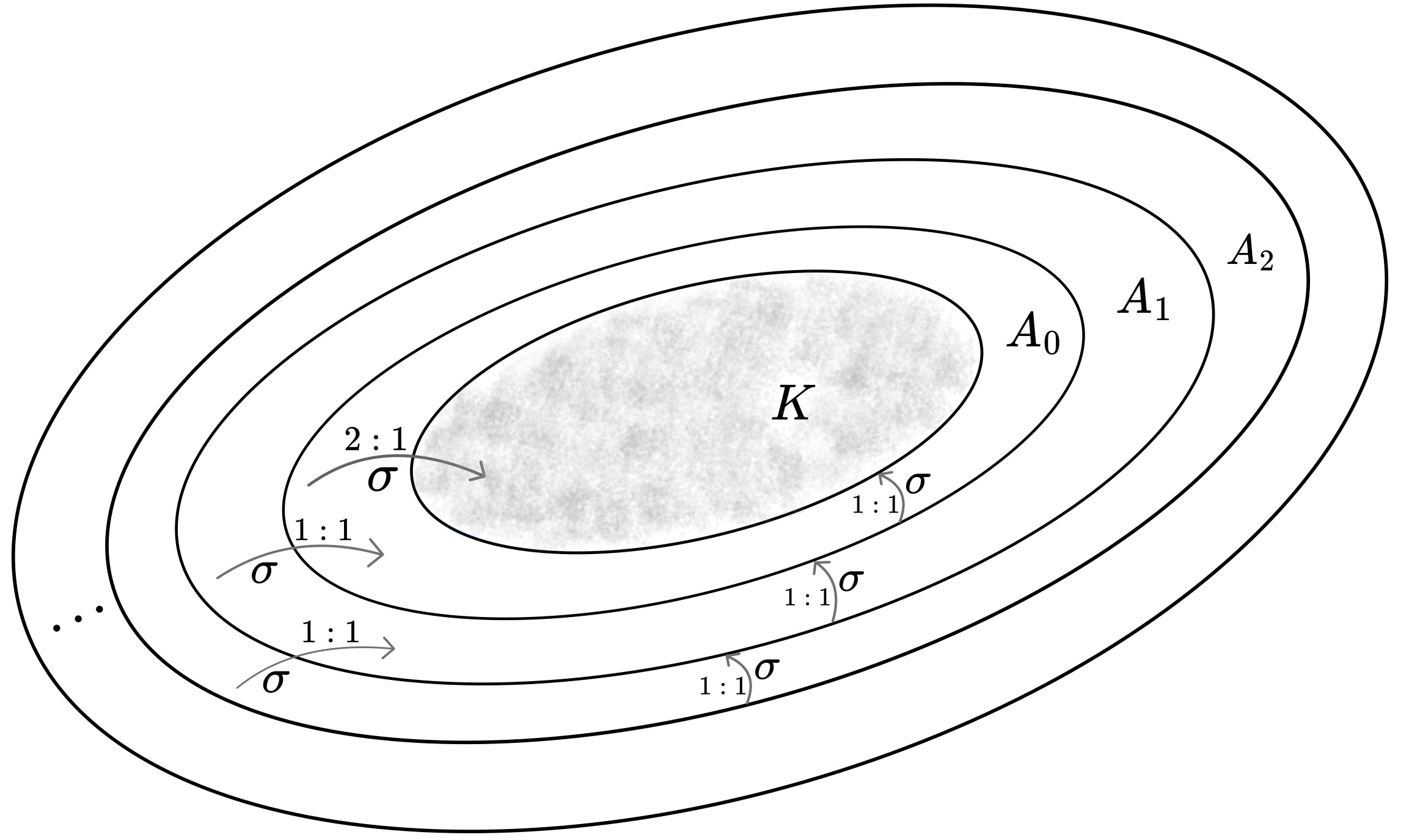}
    \caption{The annular tiles of various ranks and their mapping degrees under the Schwarz reflection map $\sigma$ are shown.}
    \label{fig:4.2}
\end{figure}
\smallskip

\textbf{Step III: A modulus estimate and negligibility of the non-escaping set.}
Let $A_\infty := \bigcup\limits_{n \geq 0}A_n$. Then, 
$$
\Int{A_\infty}=\bigcup_{n=1}^\infty\left(\Int{\bigcup_{i=0}^n A_i}\right);
$$
i.e., $\Int{A_\infty}$ is an increasing union of open topological annuli, and hence an open topological annulus itself.
As $\sigma : A_n \to A_{n-1}$ is an isomorphism, we have that $\mathrm{mod}(A_n) = \mathrm{mod}(A_0)$, $n\geq 1$ (where, $\mathrm{mod}(B)$ stands for the modulus of an annulus $B$). By \cite[Proposition~5.4]{BH92},
\[
\mathrm{mod}(A_\infty) \geq \sum_{n}\mathrm{mod}(A_n) = \sum_{n}\mathrm{mod}(A_0).
\]
Hence, $\Int{A_\infty}$ is an annulus of infinite modulus and by \cite[Proposition 5.5]{BH92}, the boundary component of $\Int{A_\infty}$ different from $\partial K$ is a singleton. Hence, $\widehat{\C}\setminus T_K^\infty(\sigma)=\widehat{\C} \setminus \left(A_\infty \cup K\right) = \{p\}$, for some $p\in\widehat{\C}$.

It follows from the above discussion that $K$ is the desingularized droplet, and that the tiling set $T^\infty(\sigma)$ is equal to $K\cup A_\infty$. Hence, the non-escaping set $\mathscr{K}(\sigma)$ is the singleton $\{p\}$. 
\smallskip

\textbf{Step IV: Contradicting the degree of $\sigma$.}
As the tiling set and non-escaping set are both completely invariant under $\sigma$, we have $\sigma^{-1}(p)~=~\{p\}$.

Finally, as $\deg\left(\sigma:\sigma^{-1}(\Omega)\to\Omega\right) \geq 2$, we conclude that $\sigma$ maps $p$ to itself with local degree at least two, and hence $p\in \mathrm{crit}(\sigma)$.
In fact, $p$ is a super-attracting fixed point of $\sigma$ and hence has a basin of attraction $\mathcal{A}_\sigma(p)$, which is an open subset of $\mathscr{K}(\sigma)$ (cf. \cite[\S 9]{Mil06}). This is not possible, as $\mathscr{K}(\sigma)=\{p\}$, and we obtain a contradiction. This completes the proof of the lemma.
\end{proof}

Recall that the quadrature function $R_\Omega$ and the Schwarz reflection $\sigma$ have the same poles of the same multiplicity (by \cite[Lemma~3.1]{LM16}). The number of such distinct poles is given by $n_\Omega$. We record a conditional improvement of Lemma~\ref{non_sing_lem} that will be useful in the next section.

\begin{cor}\label{non_sing_pole_cor}
Assume that one of the two following conditions holds true.
\noindent\begin{enumerate}
    \item $\Omega$ is unbounded, $K$ is a non-singular component of $\Omega^\complement$, and $\infty\in\widehat{T_K^\infty}(\sigma)$. 
    \item $\Omega$ is bounded, $K$ is a non-singular component of $\Omega^\complement$, and $\infty\in\Int{K}$.    
\end{enumerate}    
Then, at least $\max\{d_f-n_\Omega+1,3\}$ critical points of $f$ (counted with multiplicity) lie in~$f^{-1}(\widehat{T_K^\infty}(\sigma))$.
\end{cor}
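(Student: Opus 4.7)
The plan is to combine Lemma~\ref{non_sing_lem} with the pole-set critical-point count from Section~\ref{pole_crit_points_subsec}. The bound $3$ in the max is immediate from Lemma~\ref{non_sing_lem} (since $T_K^\infty(\sigma)\subseteq\widehat{T_K^\infty}(\sigma)$), so the real work lies in establishing the bound $d_f-n_\Omega+1$.

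By Section~\ref{pole_crit_points_subsec}, the multi-poles of $\sigma$ account for exactly $d_f-n_\Omega$ critical points of $f$ (counted with multiplicity) in $f^{-1}(\infty)\cap W^-$ in the bounded case, and $d_f-n_\Omega-1$ in the unbounded case. Under either hypothesis of the corollary, $\infty\in\widehat{T_K^\infty}(\sigma)$, so all of these critical points already lie in $f^{-1}(\widehat{T_K^\infty}(\sigma))$. In case~(1) I would supplement them with the two critical points of $\sigma$ in $\Int A_0$ produced by Step~I of the proof of Lemma~\ref{non_sing_lem}; via the identification $\mathrm{crit}(\sigma)=f(\eta(\mathrm{crit}(f)\cap W^-))$, these lift to two critical points of $f$ whose $f$-values lie in $\Int K\subseteq\Omega^\complement$. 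Since $\infty\in\Omega$ in the unbounded setting, $\infty\notin\Int K$, so the two $A_0$-critical points are disjoint from the pole-set critical points, delivering $2+(d_f-1-n_\Omega)=d_f-n_\Omega+1$ critical points of $f$ in $f^{-1}(\widehat{T_K^\infty}(\sigma))$.

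The main obstacle is case~(2), where now $\infty\in\Int K$ and the two critical points of $\sigma$ in $A_0$ may themselves be multi-poles, potentially overlapping with the pole-set count. To separate the contributions, I would count the non-pole critical multiplicity in $A_0$ directly. Since $A_0$ is not simply connected (Step~I of Lemma~\ref{non_sing_lem}), $\chi(\Int A_0)\leq 0$, and Riemann--Hurwitz gives $d_{A_0}-\chi(\Int A_0)$ as the total critical multiplicity of $\sigma\vert_{\Int A_0}$. The poles of $\sigma$ inside $A_0$ are precisely the $\sigma\vert_{A_0}$-preimages of $\infty\in\Int K$, so their multiplicities sum to $d_{A_0}$; letting $n_{A_0}$ denote the number of distinct such poles (with $n_{A_0}\geq 1$, since $d_{A_0}\geq 2$), their contribution to the critical multiplicity is $d_{A_0}-n_{A_0}$. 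Subtracting, the non-pole critical multiplicity in $\Int A_0$ is at least $n_{A_0}-\chi(\Int A_0)\geq 1$. These non-pole critical points have $f$-values in $\Int K\setminus\{\infty\}$ and hence are disjoint from the $d_f-n_\Omega$ pole-set critical points, producing at least $d_f-n_\Omega+1$ critical points of $f$ in $f^{-1}(\widehat{T_K^\infty}(\sigma))$. Taking the maximum with the bound from Lemma~\ref{non_sing_lem} finishes the proof.
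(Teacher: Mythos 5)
Your proposal is correct. Part (1) matches the paper's argument essentially verbatim: the two non-pole critical points from Step~I of Lemma~\ref{non_sing_lem} are disjoint from the $d_f-n_\Omega-1$ pole-set critical points of Section~\ref{pole_crit_points_subsec}, giving $d_f-n_\Omega+1$ in total. For Part~(2) you correctly identify the issue (the $A_0$-critical points may coincide with poles) and resolve it by a direct Riemann--Hurwitz computation: you subtract the pole contribution $d_{A_0}-n_{A_0}$ from the total critical multiplicity $d_{A_0}-\chi(\Int{A_0})$ to isolate a non-pole critical point. The paper instead packages this into a standalone statement, Lemma~\ref{val_lem}, which asserts that a branched cover from a non-simply-connected domain onto a simply connected one has at least two distinct critical values; applying it to $\sigma|_{\Int A_0}\colon\Int{A_0}\to\Int K$ yields a critical value other than $\infty$ and hence a non-pole critical point. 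The two arguments are mathematically equivalent (the paper's Lemma~\ref{val_lem} is itself proved by the same Riemann--Hurwitz/valency count you perform inline), so you have unpacked the lemma rather than invoked it; the paper's modular version is reusable for later corollaries, while yours is self-contained here. One minor slip in your write-up: the reason $n_{A_0}\geq 1$ is simply that $\sigma|_{\Int A_0}$ is surjective onto $\Int K\ni\infty$, not that $d_{A_0}\geq 2$, though the conclusion is unaffected.
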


The proof of part $\mathrm{(2)}$ of Corollary~\ref{non_sing_pole_cor} will use the following elementary topological~result. 

\begin{lem}\label{val_lem}
Let $D_1$ and $D_2$ be two domains in $\widehat{\C}$, such that, $D_2$ is simply connected and $D_1$ is not simply connected. Then, any branched covering $g \colon D_1 \to D_2$ has at least two distinct critical values in $D_2$.
\end{lem}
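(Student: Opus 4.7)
The plan is to apply the Riemann-Hurwitz formula to $g$ and exploit the mismatch between the Euler characteristics of $D_1$ and $D_2$. First I would record the topological data: since $D_1\subset\widehat{\C}$ is connected but not simply connected, the complement $\widehat{\C}\setminus D_1$ has at least two components, so $D_1$ is homotopy equivalent to a wedge of at least one circle, giving $\chi(D_1)\leq 0$. Because any branched covering of finite degree is proper, and $D_1$, being a proper open subset of the sphere, is non-compact, the target $D_2$ must also be non-compact; combined with simple connectivity this forces $D_2$ to be homeomorphic to an open disk with $\chi(D_2)=1$.

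Writing $d$ for the degree of $g$, Riemann-Hurwitz gives
\[
\chi(D_1)\;=\;d\cdot\chi(D_2)-\sum_{p\in\crit(g)}(m_p-1)\;=\;d-\sum_{p\in\crit(g)}(m_p-1),
\]
where $m_p$ denotes the local degree of $g$ at $p$. I would then argue by contradiction: assume $g$ has at most one critical value in $D_2$. If there are no critical values at all, the ramification sum vanishes and $\chi(D_1)=d\geq 1$, contradicting $\chi(D_1)\leq 0$. If there is exactly one critical value $v\in D_2$, then $\crit(g)\subset g^{-1}(v)=\{p_1,\dots,p_k\}$, and the standard fiber identity $\sum_i m_{p_i}=d$ gives $\sum_i(m_{p_i}-1)=d-k$. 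Substituting into the Riemann-Hurwitz formula yields $\chi(D_1)=k\geq 1$, again contradicting $\chi(D_1)\leq 0$. Hence $g$ must admit at least two distinct critical values in $D_2$.

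I do not anticipate a serious obstacle, since this is essentially a one-line application of Riemann-Hurwitz once the Euler characteristics are pinned down. The only step that warrants care is ensuring that Riemann-Hurwitz for open surfaces is legitimately applicable here, which reduces to the properness of $g$; this is built into the definition of a branched covering of finite degree, and incidentally also rules out the degenerate possibility $D_2=\widehat{\C}$.
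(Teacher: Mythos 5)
Your proof is correct and follows essentially the same route as the paper's: both start from the Riemann--Hurwitz inequality $\#_m\,\mathrm{crit}(g) = d - \chi(D_1) \geq d$, and both derive a contradiction from the assumption of a single critical value by comparing this inequality against the fiber identity $\sum_{z\in g^{-1}(v)} m_z = d$. The only cosmetic differences are that you explicitly split off the no-critical-value case (which the paper subsumes into the same inequality) and that you phrase the final contradiction as $\chi(D_1) = k \geq 1$ rather than as $r=0$; these are equivalent bookkeeping choices.
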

\begin{proof}
Let $d$ be the degree of $g$. By the Riemann-Hurwitz formula,
\begin{equation}
\#_m\ \mathrm{crit}(g) = d\chi(D_2) - \chi(D_1) = d - \chi(D_1) \geq d,
\label{rh_3_eqn}
\end{equation}
(counted with multiplicity), as $\chi(D_2) = 1$ and $\chi(D_1) \leq 0$.

Assume that $g$ has a unique critical value in $D_2$, say $\omega$. Then, every critical point of $g$ maps to $\omega$. Let $\mathrm{crit}(g) = \{c_1,\cdots,c_r\}$, and $m_i$ be the multiplicity of the critical point $c_i$. By Inequality~\ref{rh_3_eqn}, we have $\sum_{i=1}^r m_i\geq d$.

We denote the valency (i.e., the local degree) of $g$ at a point $z\in D_1$ by $v(z)$. Note that $v(c_i)=m_i+1$. Then,
\begin{equation}
d=\sum\limits_{z \in g^{-1}\omega} v(z) \implies d\geq \sum\limits_{i=1}^{r} v(c_i) \implies d\geq \sum_{i=1}^r m_i +r \geq d+r 
\label{val_eqn}
\end{equation}
(cf. \cite[\S 2.5]{Bea91}). 
This implies that $r=0$, which is impossible by Inequality~\ref{rh_3_eqn}. Hence, $g$ must have at least two distinct critical values in $D_2$.
\end{proof}

\begin{proof}[Proof of Corollary~\ref{non_sing_pole_cor}\ \upshape (Part (1)).]
By Step I in the proof of Lemma~\ref{non_sing_lem}, we have at least two critical points of $f$ (counted with multiplicity) in $f^{-1}(\Int{K})$, say $c_1$ and $c_2$, which are not poles of $f$ as $f(c_1), f(c_2) \in K$ but $\infty \notin K$.
By  Section~\ref{pole_crit_points_subsec} and our assumption, there are $(d_f-n_\Omega-1)$ critical points of $f$ (counted with multiplicity) in $f^{-1}(\infty)\subset f^{-1}(\widehat{T_K^\infty}(\sigma))$. Counting them along with $c_1$ and $c_2$, we get at least
$$
(d_f-n_\Omega-1) + 2 = d_f-n_\Omega+1
$$
critical points of $f$ (counted with multiplicity) in $f^{-1}(\widehat{T_K^\infty}(\sigma))$.
\end{proof}

\begin{proof}[Proof of Corollary~\ref{non_sing_pole_cor}\upshape (Part (2)).]
Note that $\Int{K}$ is simply connected, and by Step~I of the proof of Lemma~\ref{non_sing_lem}, the interior of the neighboring component $A_0$ of $\sigma^{-1}(K)$ is not simply connected. By Lemma~\ref{val_lem}, the restriction of the Schwarz reflection map $\sigma\vert_{\Int{A_0}}$ has a critical value in $\Int{K}$ distinct from $\infty$, and hence a critical point in $\Int{A_0}$, say $c$, which is not a pole. Counting $\eta((f\vert_{W^+})^{-1}(c))$ with the $(d_f-n_\Omega)$ critical points of $f$ in $f^{-1}(\infty)\subset f^{-1}(\Int K)\subset f^{-1}(\widehat{T_{K}^\infty}(\sigma))$ gives us the desired $(d_f-n_\Omega+1)$ critical~points.
\end{proof}

\subsubsection{The case of no double points}\label{4.2.2}

Recall from Section~\ref{singularity_subsec} that every cusp on $\partial\Omega$ is a critical value of $f$. Hence, if $\partial K$ has no double points and at least $3$ cusps, then it trivially satisfies the conclusion of Proposition~\ref{individual_contribution_prop}. In what follows, we consider the cases where $\partial K$ has no double points and one/two cusp(s). Observe that when $\partial K$ contains no double points, the desingularization $K\setminus\mathcal{S}$ is connected; i.e., $K\setminus\mathcal{S}$ consists of a single component~$K_0$.

\begin{lem}[The case of two cusps]\label{two_cusp_lem}
Let $K$ be a component of $\Omega^\complement$ such that $\partial K$ has no double points and exactly two cusps. Then, there are at least $3$ critical points of $f$ (counted with multiplicity) in $\displaystyle f^{-1}(\partial K\cap\mathcal{S})\sqcup f^{-1}(T_{K_0}^\infty(\sigma))$.
\end{lem}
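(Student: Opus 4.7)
The plan is to use the two cusps to produce two critical points of $f$ on $P_K$, and then to adapt Step~I of Lemma~\ref{non_sing_lem} to produce a third critical point of $f$ in the rank-one tile of $\sigma$ adjacent to $\partial K$. By the analysis in Section~\ref{singularity_subsec}, each cusp $p_j \in \partial K$ ($j=1,2$) has a unique preimage $w_j \in P_K$ that is a critical point of $f$; these two critical points lie in $f^{-1}(\partial K \cap \mathcal{S})$. For the third, I will let $\beta_1, \beta_2$ denote the two non-singular arcs of $\partial K \setminus \{p_1, p_2\}$, and take $A_0$ to be the component of $\sigma^{-1}(K) \cap \overline{\Omega}$ containing one-sided relative neighborhoods of both $\beta_1$ and $\beta_2$ in $\overline{\Omega}$. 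Since the $\Omega$-side of each cusp $p_j$ is locally connected (the cusp is a single branch whose $\Omega$-exterior in a small disk is connected) and is mapped into $K$ by $\sigma$, the one-sided neighborhoods of $\beta_1$ and $\beta_2$ will lie in the same component~$A_0$.

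Mimicking Step~I of Lemma~\ref{non_sing_lem}, I will argue that $A_0$ is not a topological disk. Were it one, $\partial A_0$ would be a Jordan curve containing the Jordan curve $\partial K$, forcing $\partial A_0 = \partial K$ and hence $A_0 = \widehat{\C} \setminus \Int{K}$; in the multiply connected case this is incompatible with $A_0 \subset \overline{\Omega}$ (the other droplet components $K_i$ lie in $\widehat{\C} \setminus \Int{K}$ but not in $\overline{\Omega}$), while in the simply connected case it forces $A_0 = \overline{\Omega}$ and $\sigma^{-1}(\Omega) = \emptyset$, contradicting $d_f - 1 \geq 2$. Consequently, $\sigma\colon \Int{A_0} \to \Int{K}$ will be a branched covering of degree $d_{A_0} \geq 2$, and Riemann--Hurwitz will give
\[
\#_m\, \mathrm{crit}(\sigma|_{\Int{A_0}}) = d_{A_0} \cdot \chi(\Int{K}) - \chi(\Int{A_0}) \geq 2,
\]
using $\chi(\Int{K}) = 1$ and $\chi(\Int{A_0}) \leq 0$. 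Under the identification $\eta \circ (f|_{W^+})^{-1}$, these correspond to critical points of $f$ in $f^{-1}(\Int{K}) \subset f^{-1}(T_{K_0}^\infty(\sigma))$, which together with $w_1, w_2$ furnish at least three critical points of $f$ (counted with multiplicity) in the required disjoint union.

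The main point I expect will need careful treatment is verifying that under the topological-disk hypothesis on $A_0$, the boundary $\partial A_0$ is genuinely a Jordan curve (rather than some more exotic continuum), so that the step $\partial A_0 = \partial K$ is rigorous. This should follow from the real-analyticity of $\sigma$ on $\overline{\Omega} \setminus \{p_1,p_2\}$ together with the standing non-degeneracy hypothesis that $\partial \Omega$ contains no critical values of $\sigma$, which should force $A_0$ to have piecewise real-analytic boundary with singularities only at $p_1, p_2$.
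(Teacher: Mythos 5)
Your extraction of two critical points of $f$ on $P_K$ from the two cusps is correct, but the proposed third critical point is where the argument collapses. The key claim---that the one-sided neighborhoods of $\gamma^+ = \beta_1$ and $\gamma^- = \beta_2$ in $\Omega$ belong to a single component $A_0$ of $\sigma^{-1}(K)$---is unjustified and fails in precisely the scenario the lemma must rule out. While it is true that the $\Omega$-side of a cusp $p_j$ in a small disk is connected, it is \emph{not} true that this whole $\Omega$-side is mapped into $K$ by $\sigma$; near a cusp the Schwarz reflection has parabolic (petal) dynamics, and $\sigma^{-1}(K_0)$ near $p_j$ consists only of thin strips along $\gamma^\pm$, separated from each other by a chain of tiles of higher and higher rank accumulating at $p_j$. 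Thus the rank-one tiles adjacent to $\gamma^+$ and $\gamma^-$ (the paper's $K_1^+$ and $K_1^-$) are generally \emph{distinct}, and in the hard case---no critical point in $T_{K_0}^\infty(\sigma)$---the paper shows that $K_1^+ \ne K_1^-$ and that $\sigma\colon K_1^\pm \to K_0$ are both homeomorphisms of simply connected sets, so the Riemann--Hurwitz count yields nothing at all. (Contrast with Lemma~\ref{one_cusp_lem}, where the one-cusp structure forces $\sigma\colon \partial K_1 \to \partial K_0$ to have degree $\ge 2$; that mechanism is absent here because $\partial K_0$ splits into two arcs.)

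Because the extremal configuration has no critical point in any finite-rank tile, the paper's actual argument is of a fundamentally different character: it iterates the homeomorphic tile structure to all ranks $K_n^\pm$, builds an inverse branch $g=\sigma^{-2}$ that is a strict hyperbolic contraction of a simply connected domain $Y^+\subsetneq T_{K_0}^\infty(\sigma)$, and uses the parabolic local dynamics at the two cusps (Lemma~\ref{cusp_tiling_dyn_lem}) to produce two distinct $g$-orbits in $Y^+$ converging to the two distinct boundary points $p_1 \neq p_2$---impossible for a hyperbolic contraction by the shrinking-disk estimate. This modulus/hyperbolic-geometry contradiction is what the lemma genuinely requires and is entirely absent from your proposal; a single Riemann--Hurwitz computation on a rank-one tile cannot reach it.
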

\begin{proof}
In this case, $f$ has two distinct critical values given by the cusps on $\partial K$, and our goal is to associate another critical point of $f$ with $K$, by locating a critical point of $\sigma$ escaping to $K_0$.
\smallskip

\textbf{Step I: The desingularized droplet.}
Let $p_1$ and $p_2$ be the two cusps on $\partial K$. Then, we have that $K_0 = K\setminus\{p_1,p_2\}$, and $\partial K_0 \setminus \{p_1,p_2\}$ is a non-singular curve consisting of two components, say $\gamma^+$ and $\gamma^-$ (see Figure~\ref{fig:4.3}).

As in Lemma~\ref{non_sing_lem}, we assume by way of contradiction that there is no critical point of $f$ in $\displaystyle f^{-1}(T_{K_0}^\infty(\sigma))$; i.e, there is no critical value of $f$ (or equivalently of $\sigma$) in $T_{K_0}^\infty(\sigma)$.
\smallskip

\textbf{Step II: Construction of simply connected rank $n$ tiles and successive homeomorphisms.}
Let $K_1^+$ and $K_1^-$ be the components of $\sigma^{-1}(K_0)$ adjacent to $\gamma^+$ and $\gamma^-$ respectively; i.e., $\gamma^\pm\subset\partial K_1^\pm$. At this point, we do not know that $K_1^+\neq K_1^-$, but this will be established shortly. By Proposition~\ref{schwarz_deg_prop}, $\sigma:\sigma^{-1}(\Int{\Omega^\complement})\to\Omega^\complement$ is a branched covering. Let $d_1^\pm$ be the degrees of $\sigma:K_1^\pm \to K_0$, respectively.

By our hypothesis, $K_1^+ \cup K_1^-$ does not contain any critical point of $\sigma$. Then $\sigma:K_1^\pm \to K_0$ are unbranched covering maps. As $K_0$ is simply connected, it follows that $\sigma:K_1^\pm \to K_0$ are homeomorphisms. Hence, $d_1^\pm=1$ and $K_1^\pm$ are simply connected. In particular, $\sigma:\partial K_1^\pm\to\partial K_0$ are also homeomorphisms. If $K_1^+=K_1^-$, then the above observations, combined with the fact that $\sigma\vert_{\partial K_0}$ is the identity map, imply that $\partial K_0=\partial K_1^\pm$. But this would force the equality $K_0\cup K_1^\pm=\widehat{\C}\setminus\{p_1,p_2\}$ to hold true, implying that $\deg(\sigma:\sigma^{-1}(\Omega)\to\Omega)=0$, which contradicts our hypothesis. Hence, $K_1^+$ and $K_1^-$ are disjoint simply connected sets such that $\partial K_1^\pm$ are non-singular away from $p_1, p_2$.

Observe that $\sigma\vert_{K_1^+ \sqcup K_1^-}$ is an anti-conformal reflection across $\partial K$. Hence, there are components $K_2^+$ and $K_2^-$ of $\sigma^{-1}(K_1^+ \sqcup K_1^-)\subset\sigma^{-2}(K_0)$, adjacent to $K_1^+$ and $K_1^-$ respectively, such that $\sigma$ maps $K_2^+$ onto $K_1^-$ and $K_2^-$ onto $K_1^+$, say with degrees $d_2^+$ and $d_2^-$ respectively (see Figure~\ref{fig:4.3}). 

Once again, our hypothesis implies that $K_2^+ \sqcup K_2^-$ does not contain any critical point of $\sigma$, so by the previous argument, $d_2^\pm=1$ and $K_2^\pm$ are simply connected. Further, $\partial K_2^\pm$ are non-singular away from $p_1, p_2$.

Proceeding thus, we obtain a pair of sequences of simply connected sets $\{K_n^+\}_{n \geq 1}$ and $\{K_n^-\}_{n \geq 1}$, such that,
\[
\sigma:K_n^+ \to K_{n-1}^- \text{ and } \sigma:K_n^-\to K_{n-1}^+  
\]
are homeomorphisms, $K_n^\pm$ are adjacent to $K_{n-1}^\pm$, and $\partial K_n^\pm\setminus\{p_1, p_2\}$ are non-singular for all $n \geq 1$ (see Figure~\ref{fig:4.3}).

\begin{figure}[ht]
\captionsetup{width=0.98\linewidth}
    \includegraphics[width=0.66\linewidth]{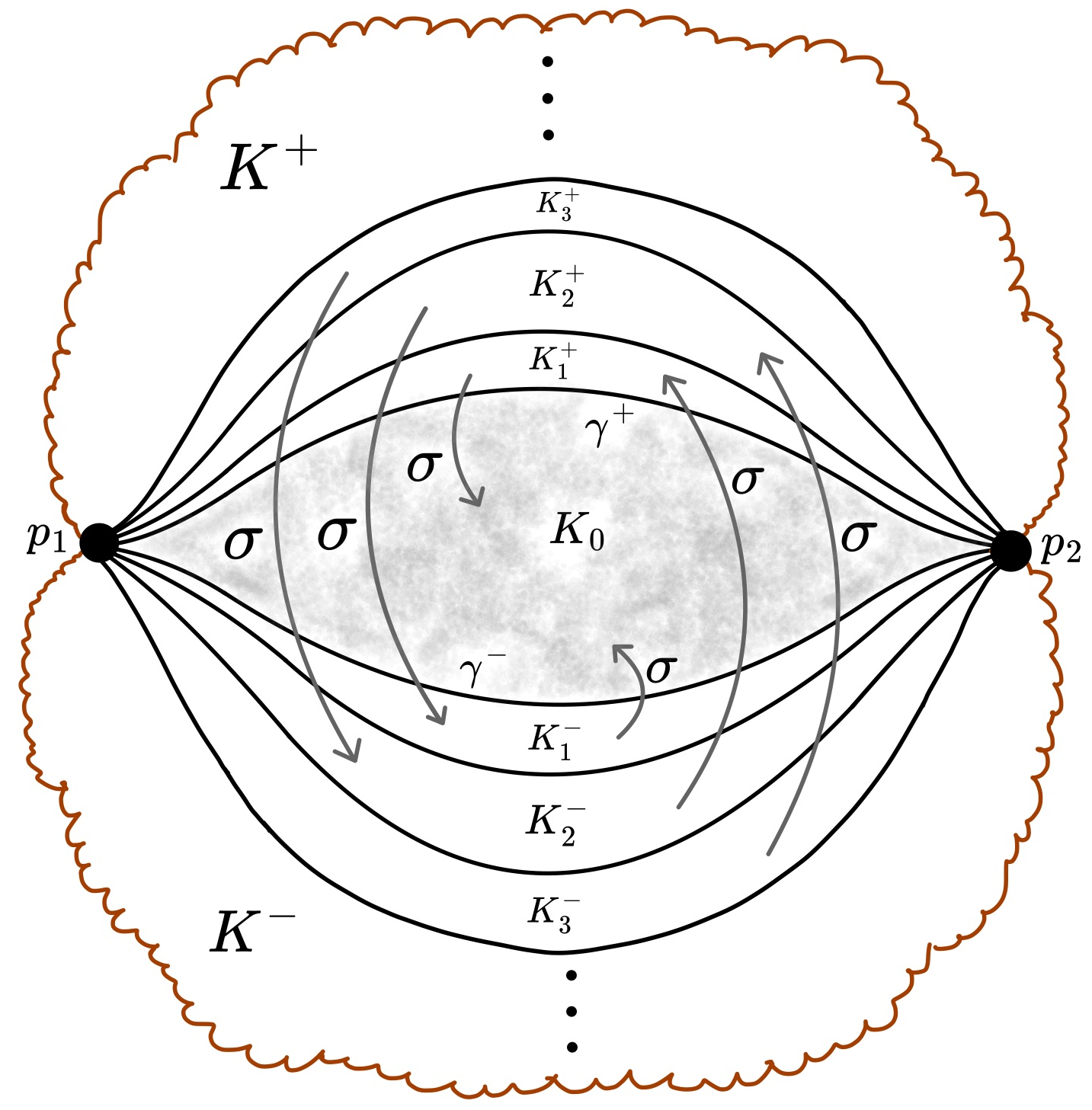}
    \caption{Illustrated is the dynamics of the Schwarz reflection $\sigma$ on the simply connected tiling component $T_{K_0}^\infty(\sigma)$, which has the brown curve as its boundary. The gray region is the rank $0$ tile $K_0$ having two cusps on its boundary. The part of $T_{K_0}^\infty(\sigma)$ above (respectively, below) $K_0$ is the set $K^+$ (respectively, $K^-$).}
    \label{fig:4.3}
\end{figure}
\smallskip

\textbf{Step III: The simply connected, forward-invariant tiling component $T_{K_0}^\infty(\sigma)$.}  
Let $K^+ := \bigcup\limits_{n \geq 1}K_n^+$ and $K^- := \bigcup\limits_{n \geq 1}K_n^-$. Then
\[
T_{K_0}^\infty(\sigma) = K_0 \cup K^+ \cup K^-,
\]
and it is an invariant component of the tiling set $T^\infty(\sigma)$ containing $K_0$.
Observe that 
$$\sigma^{\circ2}: K_n^+\to K_{n-2}^+,\ \mathrm{and}\ \sigma^{\circ 2}: K_n^-\to K_{n-2}^-
$$ 
are homeomorphisms for all $n \geq 2$ (where we use the convention $K_0^+=K_0^-=K_0$). Hence,
\[
\sigma^{\circ2}: X^+:=\Int{\bigcup\limits_{n \geq 2}K_n^+} \longrightarrow Y^+:=\left(\bigcup\limits_{n \geq 0}K_n^+\right) \setminus\, \gamma^-
\]
is a biholomorphism. Further, we have that $X^+\subsetneq  Y^+$.
\smallskip

\textbf{Step IV: A contraction mapping.}
Consider the biholomorphism 
$$
g := \sigma^{-2} \colon Y^+ \to X^+.
$$ 
As $X^+, Y^+$ are hyperbolic Riemann surfaces, it follows by the Schwarz--Pick Theorem that $g$ is an isometry with respect to the hyperbolic metrics on $X^+$ and $Y^+$. Hence,
\begin{equation}
d_{Y^+}(z,w) = d_{X^+}(g(z),g(w))\quad \forall\quad z,w \in Y^+.
\label{isom_eqn}
\end{equation}
On the other hand, since $X^+ \subsetneq Y^+$, it again follows by the Schwarz--Pick Theorem that the inclusion map $(X^+,d_{X^+}) \hookrightarrow (Y^+,d_{Y^+})$ is a strict contraction; i.e.,
\begin{equation}
d_{X^+}(z,w) > d_{Y^+}(z,w)\quad \forall\quad z \neq w \in X^+.
\label{contract_eqn}
\end{equation}
From Equations~\eqref{isom_eqn} and~\eqref{contract_eqn}, we have
\[
d_{Y^+}(g(z),g(w)) < d_{Y^+}(z,w)\quad \forall\quad z \neq w \in Y^+.
\]
Hence, $g \colon (Y^+,d_{Y^+}) \to (Y^+,d_{Y^+})$ is a strict contraction.
\smallskip

\textbf{Step V: Dynamics at $p_i$, $i\in\{1,2\}$.}
Now, we investigate the dynamics of $g$, or equivalently $\sigma^{\circ2}$, at the cusp points $p_1$ and $p_2$. We will refer to Section~\ref{cusp_dp_dyn_sec} for the same.
\begin{figure}[h!]
\captionsetup{width=0.98\linewidth}
    \includegraphics[width=0.96\linewidth]{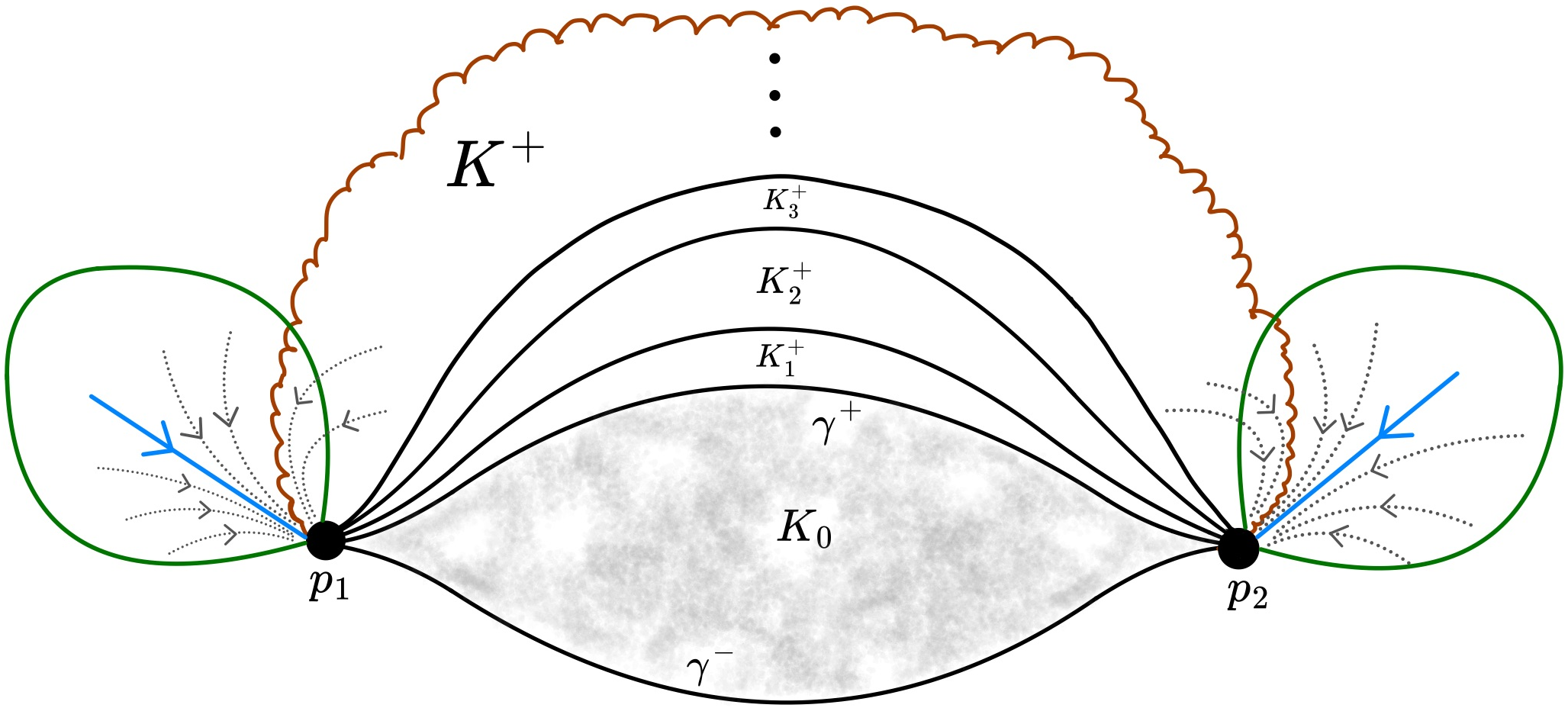}
    \caption{The dynamics of the branch $g$ of $\sigma^{-2}$ near the cusps $p_1, p_2$ is shown. The blue directions are repelling for $\sigma^{\circ 2}$ and hence attracting for $g$. The domains enclosed by the green curves are the corresponding attracting petals for $g$. Some $g-$orbits converging to $p_i$ through these petals are shown in gray.}
    \label{fig:4.4}
\end{figure}
Clearly, the set $Y^+$ satisfies all the conditions of Lemma~\ref{cusp_tiling_dyn_lem} for both cusps $p_1, p_2$. Hence, there exist $z_i\in Y^+$ such that $g^{\circ n}(z_i)\xrightarrow{n\to\infty} p_i$, $i\in\{1,2\}$. (See Figure~\ref{fig:4.4}.)
\smallskip

\textbf{Step VI: Contradicting hyperbolic contraction.}
By the previous paragraph, there exist $z_i\in Y^+$ with $g^{\circ n}(z_i)\xrightarrow{n\to\infty} p_i\in\partial Y^+$, $i\in\{1,2\}$. Let $d_{Y^+}(z_1,z_2)=\delta>0$. Since $g$ is a hyperbolic contraction, it follows that $d_{Y^+}(g^{\circ n}(z_1),g^{\circ n}(z_2))\leq\delta$, for all $n\geq 0$. The fact that $g^{\circ n}(z_1)\xrightarrow{n\to\infty} p_1\in\partial Y^+$ implies that the closed hyperbolic balls $\overline{B_{Y^+}}(g^{\circ n}(z_1),\delta)$ also converge to the cusp $p_1$ (as the Euclidean diameter of $\overline{B_{Y^+}}(g^{\circ n}(z_1),\delta)$ goes to $0$, cf. \cite[Theorem~3.4]{Mil06}). Thus, we must have that $g^{\circ n}(z_2)\xrightarrow{n\to\infty} p_1$ as well, which contradicts the fact that~$p_1\neq p_2$.
\end{proof}

\begin{cor}\label{two_cusp_pole_cor}
Let $K$ be a component of $\Omega^\complement$ such that $\partial K$ has no double points and at least two cusps. 
\noindent\begin{enumerate}
    \item If $\Omega$ is unbounded and $\infty\in\widehat{T_{K_0}^\infty}(\sigma)$, then at least $\max\{d_f-n_\Omega+1,3\}$ critical points of $f$ (counted with multiplicity) lie in $\displaystyle f^{-1}(\partial K\cap\mathcal{S})\sqcup f^{-1}(\widehat{T_{K_0}^\infty}(\sigma))$.
    \item If $\Omega$ is bounded and $\infty\in\Int{K}$, then at least $\max\{d_f-n_\Omega+2,3\}$ critical points of $f$ (counted with multiplicity) lie in $\displaystyle f^{-1}(\partial K\cap\mathcal{S})\sqcup f^{-1}(\widehat{T_{K_0}^\infty}(\sigma))$.   
\end{enumerate}    
\end{cor}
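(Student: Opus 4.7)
The plan is to mimic the proof of Corollary~\ref{non_sing_pole_cor} line by line, with Lemma~\ref{two_cusp_lem} in place of Lemma~\ref{non_sing_lem} and with the cusp critical points playing the role of the ``extra'' critical points found in Step~I of Lemma~\ref{non_sing_lem}. Fix the two cusps $\{p_1,p_2\}=\partial K\cap\mathcal{S}$, so that $K_0=K\setminus\{p_1,p_2\}$. By the discussion in Section~\ref{singularity_subsec}, each $p_i$ has a unique preimage in $f^{-1}(p_i)\cap P$, and this preimage is a simple critical point of $f$; this gives two critical points of $f$ (with multiplicity) in $f^{-1}(\partial K\cap\mathcal{S})$. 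These cusp critical points lie on $P$ and map to finite points, so they are automatically disjoint from $f^{-1}(\infty)$, which by Proposition~\ref{schwarz_alg_prop} sits in $W^-\sqcup W^+$. Moreover, $\{p_1,p_2\}$ is $\sigma$-fixed, so no cusp lies in any $\sigma^{-n}(K_0)$ for $n\ge 0$; hence $\partial K\cap\mathcal{S}$ is disjoint from $\widehat{T_{K_0}^\infty}(\sigma)$ and the disjoint-union notation in the statement is warranted.

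For Part~(1), I would invoke the unbounded case of Section~\ref{pole_crit_points_subsec} to get $d_f-n_\Omega-1$ critical points of $f$ (counted with multiplicity) lying in $\eta((f|_{W^+})^{-1}(P_\sigma))\subset W^-$, all mapping to $\infty$. The hypothesis $\infty\in \widehat{T_{K_0}^\infty}(\sigma)$ means $\sigma^{\circ k}(\infty)\in K_0$ for some $k\ge 0$; since every pole $\omega$ of $\sigma$ satisfies $\sigma(\omega)=\infty$, one has $\sigma^{\circ(k+1)}(\omega)\in K_0$, so every pole of $\sigma$ lies in $\widehat{T_{K_0}^\infty}(\sigma)$. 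Consequently all $d_f-n_\Omega-1$ pole-derived critical points of $f$ lie in $f^{-1}(\widehat{T_{K_0}^\infty}(\sigma))$. Adding the two cusp critical points yields $d_f-n_\Omega+1$ critical points in the prescribed set, and combining with the lower bound of $3$ furnished by Lemma~\ref{two_cusp_lem} gives the desired $\max\{d_f-n_\Omega+1,\,3\}$.

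For Part~(2), the argument is identical, but I would use the bounded case of Section~\ref{pole_crit_points_subsec}, which gives $d_f-n_\Omega$ critical points of $f$ in $f^{-1}(\infty)\cap W^-$. Here $\infty\in\Int K\subset K_0\subset \widehat{T_{K_0}^\infty}(\sigma)$, so again these critical points lie in $f^{-1}(\widehat{T_{K_0}^\infty}(\sigma))$, and adding the two cusp critical points gives $d_f-n_\Omega+2$. Taking the maximum with the $3$ from Lemma~\ref{two_cusp_lem} completes the argument.

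I do not expect a real obstacle here. The only point that needs to be verified rather than proved is the disjointness allowing the two counts to be added: cusp critical points sit on the fixed curve $P$ and map to finite boundary singularities, whereas the pole-derived critical points sit in $W^-$ and map to $\infty$, so no point is double-counted. Everything else is a direct accounting of the ingredients already assembled in Lemma~\ref{two_cusp_lem} and Section~\ref{pole_crit_points_subsec}.
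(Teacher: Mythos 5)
Your proof is correct and follows essentially the same route as the paper: two cusp-derived critical points of $f$ on $P$ plus the $d_f-n_\Omega-1$ (respectively $d_f-n_\Omega$) pole-derived critical points from Section~\ref{pole_crit_points_subsec}, combined with the lower bound of $3$ from Lemma~\ref{two_cusp_lem}, and a disjointness check to allow the counts to be added. One small remark: in Part~(1) you argue that the pole-derived critical points of $f$ lie in $f^{-1}(\widehat{T_{K_0}^\infty}(\sigma))$ by first showing that the poles of $\sigma$ lie in $\widehat{T_{K_0}^\infty}(\sigma)$; this is a detour, since those critical points all have $f$-image equal to $\infty$, and the hypothesis $\infty\in\widehat{T_{K_0}^\infty}(\sigma)$ already gives $f^{-1}(\{\infty\})\subset f^{-1}(\widehat{T_{K_0}^\infty}(\sigma))$ directly. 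The conclusion is unaffected.
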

\begin{proof}[Proof of Part $\mathrm{(1)}$]
The meromorphic map $f$ has at least two distinct critical values given by the cusps on $\partial K$ and hence two associated critical points, say $c_1$ and $c_2$, which are not poles of $f$ as $f(c_1) = p_1, f(c_2) = p_2 \in \partial K$ but $\infty \in \Ext{K}$. Similarly as in Corollary~\ref{non_sing_pole_cor}, the critical points $c_1, c_2$ along with the $(d_f-n_\Omega-1)$ critical points of $f$ in $f^{-1}(\infty)\subset f^{-1}(\widehat{T_{K_0}^\infty}(\sigma))$ (cf. Section~\ref{pole_crit_points_subsec}) yield $(d_f-n_\Omega+1)$ critical points of $f$ (counted with multiplicity) in $\displaystyle f^{-1}(\partial K\cap\mathcal{S})\sqcup f^{-1}(\widehat{T_{K_0}^\infty}(\sigma))$.
\end{proof}
\begin{proof}[Proof of Part $\mathrm{(2)}$]
Once again, the two cusps $p_1, p_2\in \partial K$ give rise to two distinct critical points $c_1, c_2$ of $f$ with $f(c_i)=p_i$, $i\in\{1,2\}$. Further, $c_1, c_2$ are not poles of $f$ as $f(c_1) = p_1, f(c_2) = p_2 \in \partial K$ but $\infty \in \Int{K}$.
Hence, counting $c_1$ and $c_2$ with the $(d_f-n_\Omega)$ critical points of $f$ in $f^{-1}(\infty)\subset f^{-1}(\Int K)\subset f^{-1}(\widehat{T_{K_0}^\infty}(\sigma))$ gives us the desired $(d_f-n_\Omega+2)$ critical points. 
\end{proof}

\begin{lem}[The case of one cusp]\label{one_cusp_lem}
Let $K$ be a component of $\Omega^\complement$ such that $\partial K$ has no double points and exactly one cusp. Then, there are at least $3$ critical points of $f$ (counted with multiplicity) in $\displaystyle f^{-1}(\partial K\cap\mathcal{S})\sqcup f^{-1}(T_{K_0}^\infty(\sigma))$.
\end{lem}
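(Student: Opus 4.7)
The strategy mirrors Lemma~\ref{two_cusp_lem} (two cusps) but requires a finer argument because only a single cusp is available. By Section~\ref{singularity_subsec}, the cusp $p$ supplies exactly one critical point of $f$ on $P$ lying in $f^{-1}(\partial K\cap\mathcal{S})$, so the task reduces to exhibiting at least two critical points of $\sigma$ inside $T_{K_0}^\infty(\sigma)$.

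\textbf{Step I (the rank-one tile).} Let $A_0$ be the component of $\sigma^{-1}(K_0)$ in $T_{K_0}^\infty(\sigma)$ adjacent to the non-singular arc $\partial K\setminus\{p\}$, and set $d_{A_0}:=\deg(\sigma\colon A_0\to K_0)$. If $A_0$ carried no critical points of $\sigma$, then $\sigma\colon A_0\to K_0$ would be an unbranched cover of the simply connected $K_0$, hence a homeomorphism; this forces $A_0=\Omega$ (the unique simply connected region bordered by $\partial K$ on the $\Omega$-side), giving $d_f=1$ and contradicting the standing hypothesis $d_f\geq 3$. The Riemann--Hurwitz formula then yields
\[
\#_m\,\mathrm{crit}(\sigma\vert_{A_0}) \;=\; d_{A_0}-\chi(A_0).
\]
If $A_0$ is non-simply connected, then $\chi(A_0)\leq 0$ and $d_{A_0}\geq 2$ force at least two critical points inside $A_0\subset T_{K_0}^\infty(\sigma)$; if $A_0$ is simply connected with $d_{A_0}\geq 3$, again the count is at least two. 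The only surviving sub-case is $A_0$ simply connected with $d_{A_0}=2$ and exactly one critical point of $\sigma$ in $A_0$.

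\textbf{Step II (linear tile chain).} In the surviving sub-case, assume for contradiction that this is the sole critical point of $\sigma$ in $T_{K_0}^\infty(\sigma)$. Mimicking the induction in Step~II of Lemma~\ref{two_cusp_lem}, the absence of further critical points forces each rank-$n$ tile of $T_{K_0}^\infty(\sigma)$ with $n\geq 2$ to be simply connected and to be mapped as a homeomorphism onto the tile of rank $n-1$ by $\sigma$. One thus obtains a linear chain $K_0,\,A_0,\,A_1,\,A_2,\,\ldots$ of simply connected tiles, with $\sigma\colon A_n\to A_{n-1}$ a homeomorphism for every $n\geq 1$ and the cusp $p$ lying on the boundary of each tile.

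\textbf{Step III (hyperbolic contraction).} Setting
\[
Y:=\mathrm{Int}\Bigl(\bigcup_{n\geq 0}A_n\Bigr),\qquad X:=\mathrm{Int}\Bigl(\bigcup_{n\geq 2}A_n\Bigr)\subsetneq Y,
\]
the map $\sigma^{\circ 2}\colon X\to Y$ is a biholomorphism (as a composition of two homeomorphisms on each tile for $n\geq 1$). The domain $Y$ is hyperbolic because $\widehat{\C}\setminus Y$ contains $\mathrm{Int}(K)$ and therefore has non-empty interior. By the Schwarz--Pick theorem, $g:=\sigma^{-2}\colon Y\to X$ is a strict hyperbolic contraction of $(Y,d_Y)$, exactly as in Steps~III--IV of Lemma~\ref{two_cusp_lem}.

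\textbf{Step IV (contradiction).} Applying Lemma~\ref{cusp_tiling_dyn_lem} to $U=Y$ with $p\in\partial g(Y)$ produces an orbit $\{g^{\circ k}(z_1)\}\subset Y$ converging to $p$ through the repelling petal of the parabolic germ $\sigma^{\circ 2}$ at $p$. Following the template of Step~VI of Lemma~\ref{two_cusp_lem}, the contradiction comes from producing a \emph{second} orbit $\{g^{\circ k}(z_2)\}$ in $Y$ whose Euclidean accumulation point is distinct from that of the first: strict hyperbolic contraction bounds $d_Y(g^{\circ k}(z_1),g^{\circ k}(z_2))$, and, since the Euclidean diameter of closed $d_Y$-balls shrinks to $0$ near $\partial Y$, the two orbits are forced to share their Euclidean limit, a contradiction.

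\textbf{Main obstacle.} The essential novelty compared to Lemma~\ref{two_cusp_lem} is producing the second orbit without a second cusp. My plan is to exploit the detailed structure of $\partial Y$ at $p$: the linear chain approaches $p$ both through the repelling petal (as rank-$n$ tiles pile up) and via the two tangent branches of the cusp curve $\partial K$ that bound the identity arc on $\partial A_0$, producing two distinct prime ends of $Y$ lying over the single point $p\in\widehat{\C}$, from one of which a second $g$-orbit with a genuinely different prime-end limit can be extracted. Carefully verifying the distinctness of these prime ends, and confirming the Euclidean separation of the resulting orbits, is the technical crux of the one-cusp argument.
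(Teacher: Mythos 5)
Your proposal correctly reproduces Steps~I--IV of the paper's argument (extracting one critical point of $f$ over the cusp, one more inside the rank-one tile, building the linear chain of homeomorphic tiles, and obtaining a hyperbolic contraction $g=\sigma^{-2}$ on a simply connected hyperbolic domain). However, the crux of the one-cusp case is precisely the part you flag as the ``main obstacle'' and do not resolve, and that is a genuine gap, not a detail. As you yourself note, both $g$-orbits must converge to the \emph{same} Euclidean point $p$, so the contradiction via distinct Euclidean limits in your Step~IV (borrowed from the two-cusp template) cannot go through verbatim; the contradiction has to be produced at the level of accesses/prime ends of $V$ at $p$, and you explicitly say you have not verified the distinctness of those accesses.

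The paper fills this gap with four additional steps, none of which appear in your proposal. First it shows $T_{K_0}^\infty(\sigma)\subsetneq\widehat{\C}\setminus\{p\}$, using $d_f\geq 3$: if equality held, then $\Omega^\complement=K$, the tile chain would exhaust $\Omega$, and one would compute $\deg(\sigma\colon\sigma^{-1}(\Omega)\to\Omega)=1$, i.e.\ $d_f=2$. This guarantees that $\partial T_{K_0}^\infty(\sigma)\setminus\{p\}$ is nonempty. Next it shows $\partial V=\partial K_0\cup\partial T_{K_0}^\infty(\sigma)$ and that $p$ is a cut-point of $\partial V$, because $\gamma=\partial K_0\setminus\{p\}$ and $\partial T_{K_0}^\infty(\sigma)\setminus\{p\}$ are separated by $\Int{K_1}\cup\{p\}$. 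Only then can one pick the \emph{two} components $U_1,U_2$ of $B(p,\epsilon)\cap V$ meeting $K_1$ and apply the petal lemma (Lemma~\ref{cusp_tiling_dyn_lem}) in each to obtain two $g$-orbits converging to $p$ through genuinely different accesses. Finally one passes to a Riemann map $\varphi\colon\D\to V$: a loop $\Gamma\subset\overline{K_1}$ through $p$ and a base point $q\in\Int{K_1}$ is not null-homotopic in $V$ (again by the cut-point property), so the two accesses correspond to two distinct landing points $\tau_1\neq\tau_2\in\partial\D$, and the conjugate map $\widetilde{g}=\varphi^{-1}\circ g\circ\varphi\colon\D\to\D$ then has two orbits with distinct boundary accumulation points, contradicting the Schwarz--Pick contraction. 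Without establishing (a) properness of $T_{K_0}^\infty(\sigma)$ in $\widehat{\C}\setminus\{p\}$ and (b) the cut-point structure of $\partial V$ at $p$, the prime-end distinction you gesture at cannot be asserted; these are exactly the facts your ``careful verification'' would have to deliver, and they occupy the bulk of the actual proof.

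A minor additional remark on your Step~I: you argue that if $\sigma\colon A_0\to K_0$ were a homeomorphism then $A_0=\Omega$ and $d_f=1$, but the intermediate step (why $\partial A_0=\partial K_0$?) is left unjustified. The paper instead observes directly that each point of $\gamma$ has a preimage on itself (by the identity on $\partial\Omega$) and another on $\partial K_1\setminus\partial K_0$ (nonempty since $d_f\geq 3$ forces $K_0\cup K_1\subsetneq\widehat{\C}\setminus\{p\}$), so $\sigma\colon\partial K_1\to\partial K_0$ has degree at least two, hence $d_1\geq 2$. This is cleaner and avoids the topological ambiguity in your version.
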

\begin{proof}
In this case,  $f$ has a critical value given by the cusp on $\partial K$, and our goal is to associate two more critical points of $f$ with $K$.
\smallskip

\textbf{Step I: The desingularized droplet contains a critical value of $f$.}
Let $p$ be the cusp on $\partial K$. We have the algebraic droplet $K$ and consider the desingularized droplet $K_0 = K\setminus\{p\}$. Let $\gamma := \partial K_0 \setminus \{p\}$. Then, $\gamma$ is a non-singular real-analytic curve (see Figure~\ref{fig:4.5}).

Let $K_1$ be the component of $\sigma^{-1}(K_0)$ adjacent to $K_0$. Since $d_f\geq 3$, we have that $K_0\cup K_1\subsetneq\widehat{\C}\setminus\{p\}$. By Proposition~\ref{schwarz_deg_prop}, $\sigma:\sigma^{-1}(\Int{\Omega^\complement})\to\Omega^\complement$ is a branched covering. Let $d_1$ be the degree of $\sigma : \Int{K_1} \to \Int{K_0}$. As every point on $\gamma$ has at least two pre-images under $\sigma$; one on $\partial K_1 \setminus \partial K_0$ and another on itself, it follows that $\sigma : \partial K_1 \to \partial K_0$ has degree at least two and hence $d_1 \geq 2$. By the Riemann-Hurwitz formula, we have:
\[\#_m\ \mathrm{crit}(\sigma\vert_{\Int{K_1}}) = d_1\cdot\chi(\Int{K_0}) - \chi(\Int{K_1}) = d_1 - \chi(\Int{K_1}) \geq 2 - 1 = 1.\]
So, there is at least one critical point of $\sigma$ in $\Int{K_1}$, and hence at least one critical point of $f$ in $f^{-1}(\Int{K_0})$.
\smallskip

\textbf{Step II: Simple connectedness of $K_1$ and the degree of $\sigma$ on $K_1$.}
We assume by way of contradiction that there is no critical point of $f$, other than the one already accounted for, in $f^{-1}(T_{K_0}^\infty(\sigma))$; or equivalently, $\sigma$ has a unique, simple critical point in $\Int{K_1}$ and no further critical point in in $T_{K_0}^\infty(\sigma)$.

If $K_1$ is not simply connected or if $d_1 \geq 3$, then there are at least two critical points of $\sigma$ in $K_1$, which contradicts our hypothesis. Thus, $K_1$ is simply connected and $d_1 = 2$. Therefore, $\sigma: \Int{K_1} \xrightarrow{2:1} \Int{K_0}$ is a degree two branched covering. Further, $\partial K_1\setminus\{p\}$ is non-singular.
\smallskip

\textbf{Step III: Construction of simply connected rank $n$ tiles and successive homeomorphisms.}
Let $K_2$ be the component of $\sigma^{-1}(K_1)$ adjacent to $K_1$ and $d_2$ be the degree of $\sigma : \Int{K_2} \to \Int{K_1}$. Once again, the Riemann-Hurwitz formula gives:
\[
\#_m\ \mathrm{crit}(\sigma\vert_{\Int{K_2}}) = d_2\cdot\chi(\Int{K_1}) - \chi(\Int{K_2}) = d_2 - \chi(\Int{K_2}).
\]
Since $K_2$ does not contain any critical point of $\sigma$, we must have $d_2 = \chi(K_2) = 1$ (note that $d_2 \geq 1$ and $\chi(\Int{K_2}) \leq 1$). So, $K_2$ is simply connected, $\sigma : \Int{K_2} \xrightarrow{1:1} \Int{K_1}$ is a conformal isomorphism, and $\partial K_2\setminus\{p\}$ is non-singular.

Proceeding thus, we obtain a sequence of simply connected domains $\{\Int{K_n}\}_{n \geq 1}$ and a sequence of homeomorphisms $\{\sigma : K_{n+1} \to K_n\}_{n \geq 1}$ (see Figure~\ref{fig:4.5}).
\begin{figure}[ht]
\captionsetup{width=0.98\linewidth}
    \includegraphics[width=0.72\linewidth]{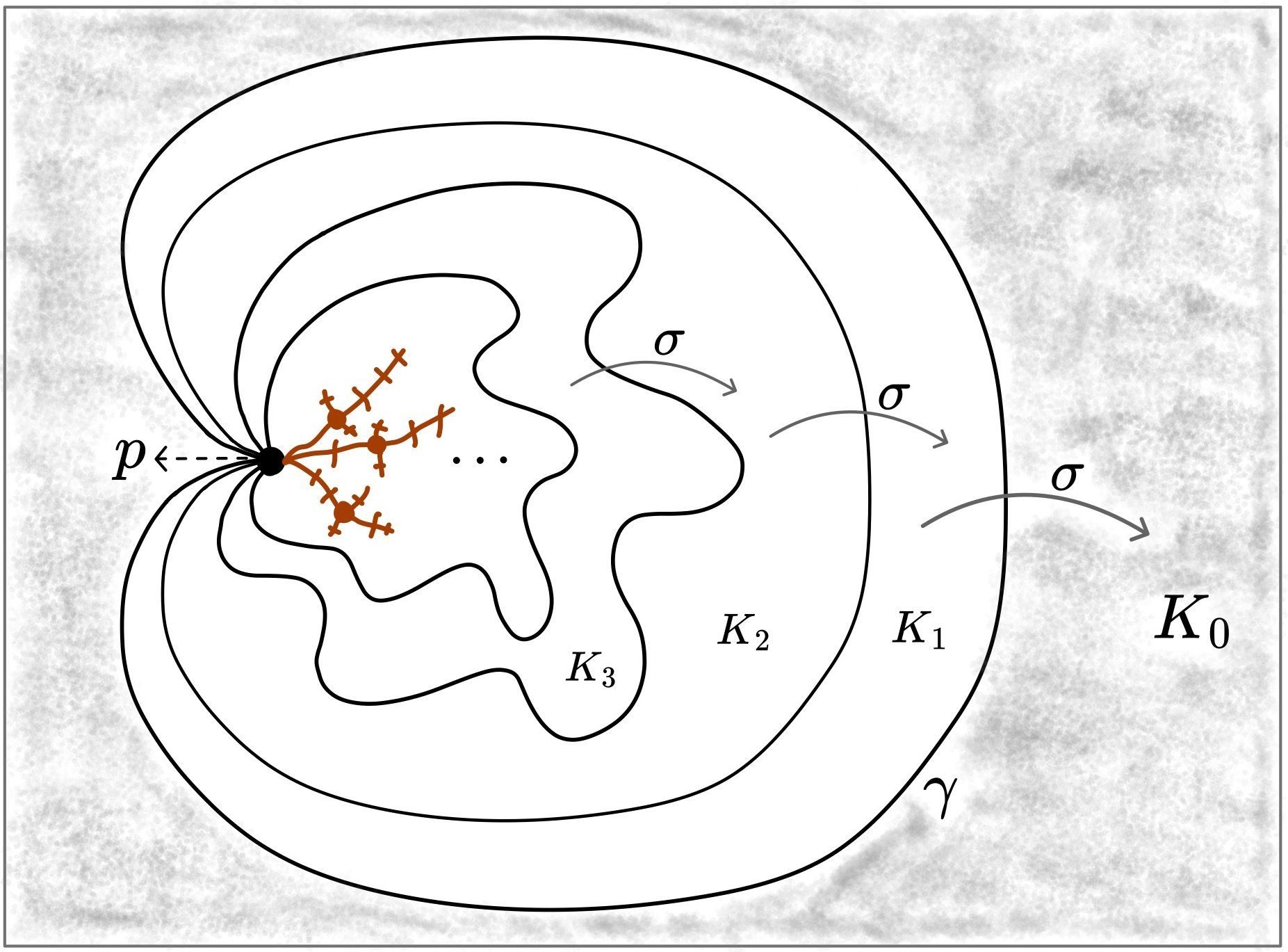}
    \caption{Illustrated is the dynamics of the Schwarz reflection $\sigma$ on the simply connected tiling component $T_{K_0}^\infty(\sigma)$, whose boundary is drawn in brown. The gray region is the rank $0$ tile $K_0$ that has a unique cusp on its boundary. The white region is the domain $V$, which maps into itself under a branch $g$ of $\sigma^{-2}$.}
    \label{fig:4.5}
\end{figure}
\smallskip

\textbf{Step IV: The simply connected, forward-invariant tiling component $T_{K_0}^\infty(\sigma)$ and a contraction mapping.}
Evidently,
\[
T^\infty_{K_0}(\sigma) = \bigcup\limits_{n \geq 0}K_n
\]
is a simply connected, forward invariant component of the tiling set $T^\infty(\sigma)$.
Observe that $\sigma^{\circ2} : K_{n+2} \to K_n$ is an isomorphism for all $n \geq 1$.
We define
\[
U := T_{K_0}^\infty(\sigma)\setminus\left(K_0 \cup K_1 \cup K_2\right) = \Int{\bigcup\limits_{n \geq 3}K_n},
\]
and
\[
V := T_{K_0}^\infty(\sigma)\setminus K_0 = \Int{\bigcup\limits_{n \geq 1}K_n} \supsetneq U.
\]
Evidently, $\sigma^{\circ2} : U \to V$ is a biholomorphism. Let $g := \sigma^{-2} \colon V \to U\subsetneq V$. As in the proof of Lemma~\ref{two_cusp_lem}, the map $g$ is a strict contraction with respect to the hyperbolic metric on $V$.
\smallskip

\textbf{Step V: The conformal type of $T_{K_0}^\infty(\sigma)$.}
\smallskip

\noindent\textbf{Claim: $T_{K_0}^\infty(\sigma) \subsetneq \widehat{\C} \setminus \{p\}$ and hence $T_{K_0}^\infty(\sigma)$ is a hyperbolic surface. In particular, $\partial T_{K_0}^\infty(\sigma)\setminus\{p\} \neq\emptyset$.}
\begin{proof}[Proof of Claim]
Suppose that $T_{K_0}^\infty(\sigma) = \widehat{\C} \setminus \{p\}$. Then $T_{K_0}^\infty(\sigma)$ is the entire tiling set $T^\infty(\sigma)$ of $\sigma$, and in particular, $\Omega^\complement = K$. It follows that $\Omega = T^\infty(\sigma) \setminus K_0 = V$ and $\sigma^{-1}(\Omega) = T^\infty(\sigma)\setminus\left(K_0 \cup K_1\right) = \Int{\bigcup\limits_{n \geq 2}K_n}$. Hence, $\sigma : \sigma^{-1}(\Omega) \xrightarrow{1:1} \Omega$ is a biholomorphism and $d_f = 2$, which contradicts our hypothesis. Thus, $T_{K_0}^\infty(\sigma) \subsetneq \widehat{\C} \setminus p$, and consequently, $T_{K_0}^\infty(\sigma)$ is conformally equivalent to $\D$.   
\end{proof}
\smallskip

\textbf{Step VI: $p$ is a cut-point of $\partial V$.}
\smallskip

\noindent\textbf{Claim: $\partial V\setminus\{p\}$ is disconnected.}
\begin{proof}[Proof of Claim]
By construction, $\gamma=\partial K_0\setminus\{p\}$ is separated from $\partial T_{K_0}^\infty(\sigma)\setminus\{p\}$ by $\Int{K_1}\cup\{p\}$.
Using this fact, we will show that $\partial V = \partial T_{K_0}^\infty(\sigma) \cup \partial K_0$. From basic topological arguments, it follows that $\partial V \subseteq \partial T_{K_0}^\infty(\sigma) \cup \partial K_0$. So, we only need to prove the reverse containment $\partial T_{K_0}^\infty(\sigma) \cup \partial K_0 \subseteq \partial V$.
Any neighborhood $N(z)$ of a point $z (\neq p) \in \partial K_0$ contains points of $\Int{K_0}\subset\Ext{V}$ as well as points of $\Int{K_1} \subset V$, and $p \in \partial V$, so $\partial K_0 \subseteq \partial V$. Any neighborhood $N(w)$ of a point $w \in \partial T_{K_0}^\infty(\sigma)$ contains points of $\Ext {T_{K_0}^\infty(\sigma)}\subset\Ext {V}$ as well as points of $\Int{K_n} (\subset V)$ for some $n\in\N$; so $\partial T_{K_0}^\infty(\sigma) \subseteq \partial V$. Hence, we have $\partial T_{K_0}^\infty(\sigma) \cup \partial K_0 \subseteq \partial V$.

Further, $p \in \partial T_{K_0}^\infty(\sigma) \cap \partial K_0$. Once again, the separation of $\gamma$ from $\partial T_{K_0}^\infty(\sigma)\setminus\{p\}$ implies that $p$ is a cut point of $\partial V= \partial T_{K_0}^\infty(\sigma) \cup \partial K_0$.
\end{proof}
\smallskip

\textbf{Step VII: Dynamics at $p$.}
For $\epsilon>0$ sufficiently small, the domain $B(p,\epsilon)\cap V$ (here $B(p,\epsilon)$ is a Euclidean disk) has at least two components (as $p$ is a cut-point of $\partial V$). Let $U_1, U_2$ be the components of $B(p,\epsilon)\cap V$ intersecting $K_1$.
Then $U_j\subset T_{K_0}^\infty(\sigma)$, $U_j$ is forward-invariant under $g$ (this follows from the local dynamics of $\sigma^{\circ 2}$ near cusps), and $p\in\partial g(U_j)$, $j\in\{1,2\}$.
By Lemma~\ref{cusp_tiling_dyn_lem}, there exist $x_j\in U_j$, such that $\{g^{\circ n}(x_j)\}_{n\geq 0}$ converges to $p$ through $U_j$, $j\in\{1,2\}$ (see Figure~\ref{fig:4.7}). 
\smallskip

\textbf{Step VIII: Dynamics on the disk and contradicting hyperbolic contraction.}
Observe that $V (\subsetneq T_{K_0}^\infty(\sigma))$ is an increasing union of simply connected domains and hence is a simply connected hyperbolic surface.

Now, consider a simple loop $\Gamma$ in $\overline{K_1}$ based at $q\in\Int{K_1}$, such that $\Gamma$ passes through $p$ and $\Gamma \setminus \{p\} \subseteq\Int{K_1}$ (see Figure~\ref{fig:4.7}). Let $\varphi : \D \to V$ be a Riemann uniformizing map, normalized to send $0$ to $q$. As $\Gamma$ passes through $p$, it intersects both $U_1$ and $U_2$. Let $\Gamma_k$ be the segment of $\Gamma$ from $q$ to $p$ passing through $U_k$, $k\in\{1,2\}$. By \cite[Theorem~4.18]{BF14}, $\varphi^{-1}\circ\Gamma_k$ lands at some point $\tau_k\in\partial\D$.

We claim that $\tau_1 \neq \tau_2$. To this end, note that $\Gamma_1$ and $\Gamma_2$ are not homotopic in $V$; for if $\Gamma_1 \sim_V \Gamma_2$, then $\Gamma = \Gamma_1 \cup \Gamma_2$ would bound a Jordan domain in $V$, which contradicts the fact that $p$ is a cut-point of $\partial V$ (see Step~VI). Hence, $\Gamma_1$ and $\Gamma_2$ do not lie in same access of $V$ to $p$, and hence by \cite[Theorem~4.18]{BF14}, $\tau_1 \neq \tau_2$.
\begin{figure}[ht]
\captionsetup{width=0.98\linewidth}
    \includegraphics[width=1\linewidth]{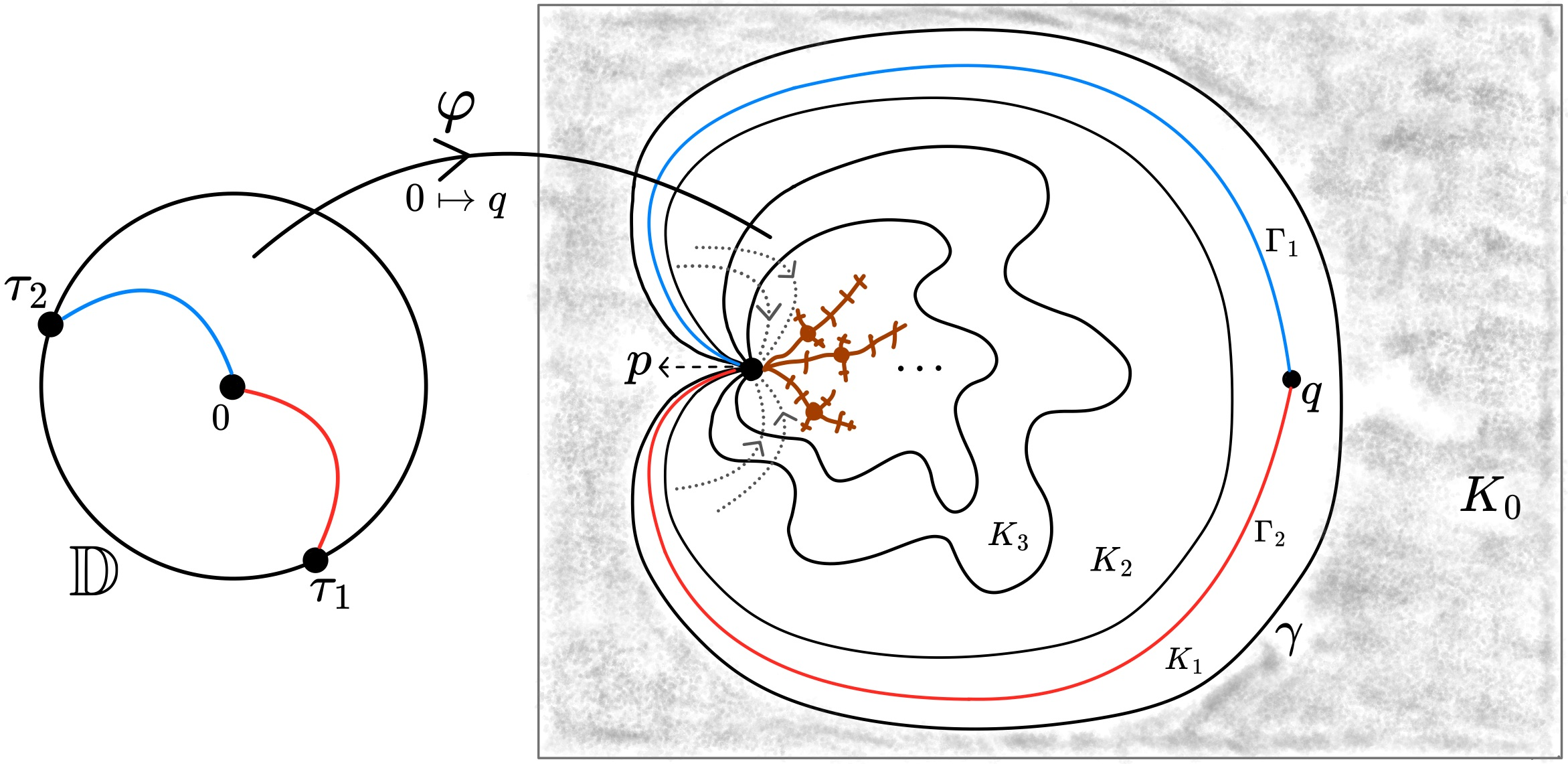}
    \caption{Pictured is the loop $\Gamma=\Gamma_1\cup\Gamma_2\subset\overline{K_1}$ passing through the cusp $p$ and an interior point $q$ of $K_1$. Under the conformal map $\phi:\D\to V$ (where $V$ is the white region), the loop $\Gamma$ lifts to two distinct accesses to $\partial\D$. Some $g-$orbits (where $g$ is a branch of $\sigma^{-2}$ preserving $V$) converging to $p$ are shown in gray. Under the uniformization $\phi$, these orbits give rise to orbits of $\widetilde{g}=\varphi^{-1} \circ g \circ \varphi:\D\to\D$ that converge to two distinct points on $\partial\D$.}
    \label{fig:4.7}
\end{figure}
\[
\begin{tikzcd}
(\D,d_\D) \arrow[r,"\widetilde{g}"] \arrow[d,swap,"\varphi"]
& (\D,d_\D) \arrow[d,"\varphi"] \\
(V,d_V) \arrow[r,swap,"g"]
& (V,d_V)
\end{tikzcd}
\]
Finally, consider the holomorphic self-map $\widetilde{g} := \varphi^{-1} \circ g \circ \varphi:\D\to\D$ and two distinct points $y_k := \varphi^{-1}(x_k)$, $k\in\{ 1,2\}$. Since $g^{\circ n}(x_k)\xrightarrow{n\to\infty} p$ through $U_k$, it now follows that $\widetilde{g}^{\circ n}(y_k) \xrightarrow{n\to\infty} \tau_k$, $k\in\{1,2\}$. As in Step~VI of Lemma~\ref{two_cusp_lem}, this is a contradiction to hyperbolic contraction of $\widetilde{g}$.
\end{proof}

\begin{cor}\label{one_cusp_pole_cor}
Let $K$ be as in Lemma~\ref{one_cusp_lem}, and assume that one of the two following conditions holds true.
\noindent\begin{enumerate}
    \item $\Omega$ is unbounded and $\infty\in\widehat{T_{K_0}^\infty}(\sigma)$. 
    \item $\Omega$ is bounded and $\infty\in\Int{K}$.
\end{enumerate} 
Then, at least $\max\{d_f-n_\Omega+1,3\}$ critical points of $f$ (counted with multiplicity) lie in $\displaystyle f^{-1}(\partial K\cap\mathcal{S})\sqcup f^{-1}(\widehat{T_{K_0}^\infty}(\sigma))$.
\end{cor}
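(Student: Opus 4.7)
The plan is to mimic the template of the proofs of Corollaries~\ref{non_sing_pole_cor} and~\ref{two_cusp_pole_cor}: combine the critical point(s) of $f$ forced by the cusp geometry on $\partial K$ (via Section~\ref{singularity_subsec}), the interior critical point produced in Step~I of Lemma~\ref{one_cusp_lem}, and the critical points of $f$ sitting over $\infty$ (counted via Section~\ref{pole_crit_points_subsec}), all while carefully keeping these contributions distinct. The unique cusp $p\in\partial K$ supplies a simple critical point $c_1$ of $f$ on the symmetry curve $P$ with $f(c_1)=p\in\partial K\cap\mathcal{S}$; in particular, $c_1\notin f^{-1}(\infty)$.

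For Part~(1), since $\Omega$ is unbounded we have $\infty\in\Omega$, and the hypothesis $\infty\in\widehat{T_{K_0}^\infty}(\sigma)$ places the $d_f-n_\Omega-1$ critical points of $f$ in $f^{-1}(\infty)\subset W^-$ inside $f^{-1}(\widehat{T_{K_0}^\infty}(\sigma))$. Step~I of Lemma~\ref{one_cusp_lem} yields a further critical point $c_0$ of $\sigma$ in $\Int{K_1}\subset\sigma^{-1}(\Int{K_0})$, whose lift $c_0':=\eta\circ(f|_{W^+})^{-1}(c_0)\in W^-$ is a critical point of $f$ with $f(c_0')=\sigma(c_0)\in\Int{K_0}=\Int{K}$. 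Since $\Omega$ is unbounded, $\infty\notin\Int{K}$, so $c_0'$ is distinct from the pole critical points, and it is trivially distinct from $c_1\in P$. Counting with multiplicity gives $(d_f-n_\Omega-1)+1+1=d_f-n_\Omega+1$ critical points of $f$ in $f^{-1}(\partial K\cap\mathcal{S})\sqcup f^{-1}(\widehat{T_{K_0}^\infty}(\sigma))$.

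For Part~(2), boundedness of $\Omega$ together with Section~\ref{pole_crit_points_subsec} gives $d_f-n_\Omega$ critical points of $f$ in $f^{-1}(\infty)\subset W^-$, and the hypothesis $\infty\in\Int{K}$ places these inside $f^{-1}(\Int{K})\subset f^{-1}(\widehat{T_{K_0}^\infty}(\sigma))$. Together with the cusp critical point $c_1\in P$ (disjoint from $f^{-1}(\infty)$ because $f(c_1)=p\neq\infty$), this yields $(d_f-n_\Omega)+1=d_f-n_\Omega+1$ critical points. The main subtlety is that, unlike in Part~(1), one cannot safely add the Step~I critical point $c_0'$: its $f$-image lies in $\Int{K}\ni\infty$ and could equal $\infty$, so $c_0'$ might coincide with one of the pole critical points already tallied. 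This is the only delicate point in the proof, and it is precisely why both parts yield the same lower bound $d_f-n_\Omega+1$ rather than the $d_f-n_\Omega+2$ one might naively hope for in the bounded case.

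In both parts the lower bound $3$ in the maximum is immediate from Lemma~\ref{one_cusp_lem} itself, using $T_{K_0}^\infty(\sigma)\subset\widehat{T_{K_0}^\infty}(\sigma)$. Thus the main bookkeeping task is the distinctness check in Part~(1): cusp critical points lie on $P$ whereas pole and Step~I critical points lie in $W^-$, and those two families in $W^-$ are separated by their $f$-images lying in $\{\infty\}$ and in $\Int{K}\setminus\{\infty\}$ respectively, with the latter separation hinging on the unboundedness of $\Omega$.
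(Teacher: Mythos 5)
Your proof is correct and follows essentially the same route as the paper: for Part~(1), combine the cusp critical point $c_1\in P$, the Step~I interior critical point lifted to $W^-$, and the $d_f-n_\Omega-1$ pole critical points, all distinguished by their $f$-images lying in $\partial K$, $\Int{K}$, and $\{\infty\}$ respectively; for Part~(2), drop the Step~I critical point (which could be a pole when $\infty\in\Int{K}$) and use only $c_1$ plus the $d_f-n_\Omega$ pole critical points. Your explicit remark about why the Step~I critical point cannot be safely added in Part~(2) is a useful clarification that the paper leaves implicit.
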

\begin{proof}[Proof of $\mathrm{(1)}$.]
The map $f$ has a critical value given by the cusp on $\partial K$ and hence an associated critical point, say $c_1$, which is not a pole of $f$ as $f(c_1) = p \in \partial K$ but $\infty \in \Ext{K}$. By Step I of the proof of Lemma~\ref{one_cusp_lem}, we obtain a critical point of $\sigma$ in $\Int{K_1}$, and this provides us with a critical point $c_2$ of $f$ in $f^{-1}(\Int K)$. Clearly, $c_2$ is not a pole of $f$ as $f(c_2)\in\Int{K}$ but $\infty \in \Ext{K}$.
Hence, the points $c_1, c_2$, and the $(d_f-n_\Omega-1)$ critical points of $f$, counted with multiplicity (see Section~\ref{pole_crit_points_subsec}), in $f^{-1}(\infty)\subset f^{-1}(\widehat{T_{K_0}^\infty}(\sigma))$ account for the $(d_f-n_\Omega+1)$ critical points of $f$ in $\displaystyle f^{-1}(\partial K\cap\mathcal{S})\sqcup f^{-1}(\widehat{T_K^\infty}(\sigma))$.
\end{proof}

\begin{proof}[Proof of $\mathrm{(2)}$.]
Once again, the cusp $p \in \partial K$ provides a critical point $c_1$ of $f$, which is not a pole of $f$ as $f(c_1) = p \in \partial K$ but $\infty \in \Int{K}$.
Hence, counting $c_1$ with the $(d_f-n_\Omega)$ critical points of $f$ in $f^{-1}(\infty)\subset f^{-1}(\Int K)\subset f^{-1}(\widehat{T_{K_0}^\infty}(\sigma))$ gives us the desired $(d_f-n_\Omega+1)$ critical points.
\end{proof}

\subsubsection{The case of double points}\label{4.2.3}

We begin with a preparatory graph-theoretic result.

\begin{lem}\label{tree_counting_lem}
Let $\mathscr{T}$ be a tree. Let $n := \#$ edges of $\mathscr{T}$, and $n_i := \#$ vertices of $\mathscr{T}$ of valence $i$, where $i\in\{1,2\}$. Then,
\begin{equation}\label{gt_lem_eqn}
2n_1+n_2 \geq n+3.
\end{equation}
Further, the bound is sharp and is attained by the chain tree.
\end{lem}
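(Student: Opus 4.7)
The plan is to derive the inequality from two elementary identities about trees, then read off the sharpness case directly.

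First, I will recall that since $\mathscr{T}$ is a tree, its total vertex count equals $n+1$. Letting $n_i$ denote the number of vertices of valence $i$ (for all $i\geq 1$, so that $\mathscr{T}$ has no isolated vertices), this gives the first identity
\[
\sum_{i\geq 1} n_i = n+1.
\]
Next, I will apply the handshake lemma: since each edge contributes $2$ to the total degree sum,
\[
\sum_{i\geq 1} i\cdot n_i = 2n.
\]

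The key step is to combine these two identities to isolate $n_1$ and $n_2$ in terms of the higher-valence counts. Subtracting the first from the second yields $\sum_{i\geq 2}(i-1)n_i = n-1$, which rearranges to
\[
n_2 = (n-1) - \sum_{i\geq 3}(i-1)n_i.
\]
Plugging this back into $n_1 = (n+1) - n_2 - \sum_{i\geq 3} n_i$ gives
\[
n_1 = 2 + \sum_{i\geq 3}(i-2)n_i.
\]
Adding twice the expression for $n_1$ to the expression for $n_2$, the $n_i$ terms recombine neatly as $2(i-2)-(i-1) = i-3$, producing
\[
2n_1 + n_2 = (n+3) + \sum_{i\geq 3}(i-3)n_i.
\]
Since each summand on the right is nonnegative, Inequality~\eqref{gt_lem_eqn} follows.

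For the sharpness claim, observe that the chain tree on $n$ edges has exactly $n_1=2$ leaves and $n_2 = n-1$ internal vertices (with $n_i=0$ for $i\geq 3$), whence $2n_1+n_2 = n+3$. I anticipate no real obstacle here; the only subtlety is bookkeeping to ensure the correct coefficient $(i-3)$ appears after combining the identities, which is what makes the bound tight precisely when $\mathscr{T}$ has no vertex of valence $\geq 4$.
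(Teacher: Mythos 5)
Your proof is correct, and it takes a genuinely different route from the paper. The paper argues by induction on the number of edges: it prunes a leaf, applies the inductive hypothesis to the smaller tree, and splits into cases according to whether the vertex adjacent to the pruned leaf had valence $2$, $3$, or more than $3$. You instead derive the inequality in closed form from the two standard counting identities $\sum_{i\geq 1} n_i = n+1$ and $\sum_{i\geq 1} i\,n_i = 2n$, arriving at the exact identity
$$
2n_1 + n_2 = (n+3) + \sum_{i\geq 3}(i-3)\,n_i,
$$
from which the inequality and the sharpness case are both immediate. Your computation is correct throughout: solving the two identities gives $n_1 = 2 + \sum_{i\geq 3}(i-2)n_i$ and $n_2 = (n-1) - \sum_{i\geq 3}(i-1)n_i$, and the coefficient $2(i-2)-(i-1)=i-3$ is indeed nonnegative for $i\geq 3$. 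Your approach buys a little more than the paper's: it isolates the exact deficit $\sum_{i\geq 3}(i-3)n_i$ and thereby characterizes \emph{all} equality cases (namely, trees with no vertex of valence $\geq 4$), whereas the paper only exhibits the chain tree as one attaining the bound. Both arguments implicitly require $n\geq 1$; you flag this by excluding isolated vertices, and the paper does so by starting its induction at $n=1$, and in the intended application $\mathscr{T}(K)$ has at least one edge, so there is no issue.
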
 
\begin{proof}
We will use induction to prove the lemma.

First, note that the statement is true for $n=1$, for in that case, $n_1=2$ and $n_2=0$ and hence
\[2n_1+n_2 = 4 = n+3.\]
Now, suppose that the statement is true for any tree with $n=\alpha$.
We will show that the Inequality~\eqref{gt_lem_eqn} holds for $n=\alpha+1$. Let $\mathscr{T}$ be a tree with $\alpha+1$ edges.
Clearly, we have $n_1\geq2$. Consider a vertex $v_0$ of valence $1$ and let $\widehat{v}$ be the vertex of $\mathscr{T}$ adjacent to $v_0$.
Let $\widehat{\mathscr{T}}$ be the tree obtained by pruning the edge $[\widehat{v},v_0]$ from $\mathscr{T}$. Then, $\widehat{\mathscr{T}}$ is a tree with $\alpha$ many edges. Denote by $\widehat{n}_i := \#$ vertices of $\widehat{\mathscr{T}}$ of valence $i$. By induction hypothesis,
\begin{equation}\label{gt_pf_eqn}
2\widehat{n}_1+\widehat{n}_2\geq\alpha+3.
\end{equation}
Further, $\val_{\widehat{\mathscr{T}}}(\widehat{v}) = \val_\mathscr{T}(\widehat{v})-1$. We now consider the following cases.
\smallskip

\noindent\textbf{Case I: $\val_\mathscr{T}(\widehat{v}) = 2$.} In this case, $\widehat{n}_1=n_1$, $\widehat{n}_2=n_2-1$.
From Inequality~\eqref{gt_pf_eqn}, we have
\[
2n_1+n_2-1\geq\alpha+3 \implies 2n_1+n_2\geq(\alpha+1)+3.
\]

\noindent\textbf{Case II: $\val_\mathscr{T}(\widehat{v}) = 3$.} In this case, $\widehat{n}_1=n_1-1$, $\widehat{n}_2=n_2+1$.
From Inequality~\eqref{gt_pf_eqn}, we have
\[
2(n_1-1)+n_2+1\geq\alpha+3 \implies 2n_1+n_2\geq(\alpha+1)+3.
\]

\noindent\textbf{Case III: $\val_\mathscr{T}(\widehat{v})> 3$.} In this case, $\widehat{n}_1=n_1-1$, $\widehat{n}_2=n_2$.
Inequality~\eqref{gt_pf_eqn} now gives
\[
2(n_1-1)+n_2\geq\alpha+3 \implies 2n_1+n_2\geq \alpha
+5 >(\alpha+1)+3.
\]
In all cases, Inequality~\eqref{gt_lem_eqn} is true for $n=\alpha+1$ and we have proven the lemma.

It is easy to see that the bound is attained by chain trees as in this case, $n_1=2$, $n_2=n-1$; and hence, $2n_1+n_2=4+(n-1)=n+3$.
\end{proof}

Let $K$ be a component of $\Omega^\complement$. We will associate a tree with $K$ whose vertices correspond to the components of $\Int{K}$. The graph-theoretic lemma proved above will later be applied to this tree.

\begin{lem}\label{droplet_tree_lem}
There is a tree $\mathscr{T}(K)$ associated with $K$ such that the following are satisfied.
\begin{enumerate}
    \item There exists a planar embedding of $\mathscr{T}(K)$ which has a unique vertex in each component of $\Int{K}$ and no other vertex. Further, the said planar embedding lives in $K$.
    \item $K$ deformation retracts to the above planar embedding of $\mathscr{T}(K)$.
\end{enumerate} 
\end{lem}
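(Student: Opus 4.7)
The plan is to define $\mathscr{T}(K)$ by taking its vertex set to be the components of $\Int K$ and its edges to be in bijection with the double points of $\partial K$, embed this graph in $K$, and then verify the tree property by a deformation-retract / Euler-characteristic argument using the simple-connectedness of $K$.

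First I would place a vertex $v_L \in L$ for each connected component $L$ of $\Int K$. For every double point $p\in\partial K$, the local description in Section~\ref{singularity_subsec} shows that $K$ near $p$ consists of a tangential strip pinched at $p$ into two local sectors, each contained in some component of $\Int K$. I would add an edge $e_p$ as the concatenation of an arc in $\overline{L_1}$ from $v_{L_1}$ to $p$ with an arc in $\overline{L_2}$ from $p$ to $v_{L_2}$, where $L_1, L_2$ are the two adjacent components. Because the components of $\Int K$ are pairwise disjoint open planar sets and the arcs meet only at common vertices or double-point endpoints, they can be isotoped to be pairwise disjoint outside these shared endpoints, yielding a planar embedding of the graph into $K$.

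The key topological input is that both $K$ and each component of $\Int K$ are simply connected. For a component $L$ of $\Int K$, the complement $\widehat{\C}\setminus L$ equals $\Omega$ together with the other parts of $\Omega^\complement\setminus L$; since $\Omega$ is connected and shares a portion of $\partial\Omega$ with every other complementary piece (other components of $\Omega^\complement$ and other components of $\Int K$), this union is connected, and thus $L$ is simply connected. The identical argument applied to $K$ itself shows $\widehat{\C}\setminus K$ is connected, so $K$ is simply connected. Using this I would rule out self-loops in the graph: if both sectors at some double point $p$ lay in the same component $L$, then a path in $L$ joining one sector to the other, closed up by a short arc through $p$ within the pinched strip, would form a Jordan curve $\gamma\subset K$; near $p$ the two regions of $\Omega$ locally adjacent to $p$ lie on opposite sides of $\gamma$, yet they are joined through the rest of $\Omega$, contradicting the connectedness of $\Omega\subset\widehat{\C}\setminus\gamma$.

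Having ruled out self-loops, each $\overline{L}$ is a closed topological disk bounded by a Jordan curve of smooth arcs meeting at cusps and double points (each double point on $\partial L$ visited exactly once from within $L$). The edges embedded inside $\overline{L}$ form a star centered at $v_L$ with tips at the double points on $\partial L$, and such a disk deformation retracts onto this star; gluing these retractions across the common double points produces a deformation retract of $K$ onto the embedded graph. Consequently $\mathscr{T}(K)$ inherits connectedness from $K$ and satisfies $\chi(\mathscr{T}(K))=\chi(K)=1$, so $\mathscr{T}(K)$ is a tree. The main delicate step is the no-self-loop claim, where the Jordan-curve analysis must be carried out carefully using the local strip geometry at a double point together with the global connectedness of $\Omega$.
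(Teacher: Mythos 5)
Your proof is correct and follows essentially the same route as the paper's: both rest on the fullness of $K$ (equivalently, connectedness of $\widehat{\C}\setminus K$), on the fact that each double point is a binary cut-point of $K$, and on a vertex/edge count (you phrase it as $\chi(\mathscr{T}(K))=\chi(K)=1$ after exhibiting a deformation retraction, while the paper counts $\alpha+1$ components of $\Int{K}$ against $\alpha$ double points). Your Jordan-curve argument ruling out self-loops is a more explicit justification of the paper's terse assertion that ``each double point of $\partial K$ is a cut-point of $K$ of valence two,'' and it is exactly the right argument: a loop in $K$ through a double point $p$ would separate the two local wedges of $\Omega$ at $p$, contradicting connectedness of $\Omega$.

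One small remark: when you conclude the tree property from $\chi(\mathscr{T}(K))=\chi(K)=1$, it is slightly cleaner to say that the deformation retraction makes $\mathscr{T}(K)$ homotopy equivalent to $K$, so $\mathscr{T}(K)$ is a simply connected finite connected graph and hence a tree; this sidesteps having to separately justify $\chi(K)=1$, which is not immediate for a general compact set but \emph{is} immediate once you know $K$ has the homotopy type of a simply connected graph. Also, when you assert that $\widehat{\C}\setminus L$ is connected for each component $L$ of $\Int{K}$, it is worth noting that this uses connectedness of $\partial K$ (which in turn follows from $K$ and $\widehat{\C}\setminus K$ both being connected), since $K\setminus L$ contains $\partial K$ and the remaining components of $\Int{K}$ attach to it. These are matters of exposition, not gaps.
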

\begin{proof}
Since $\partial\Omega$ is a real-algebraic curve, it has at most finitely many double points (cf. \cite{Gus88}). Let $\alpha$ be the number of double points on $\partial K$. 

Since $K$ is a complementary component of an open connected set in $\widehat{\C}$, it is full and compact. Further, each double point of $\partial K$ is a cut-point of $K$ of valence two; i.e., the removal of a double point disconnects $K$ into two components (see Figure~\ref{fig:4.8}). Hence, $\Int{K}$ has $\alpha+1$ components, each of which is a topological disk.

Consider a vertex in each component of $\Int{K}$ and draw an edge connecting the vertices corresponding to any pair of components whose boundaries intersect at a double point (see Figure~\ref{fig:4.8}). Evidently, this defines a graph $\mathscr{T}(K)$ with $\alpha+1$ vertices. Fullness of $K$ implies that the graph $\mathscr{T}(K)$ has $\alpha$ edges; i.e., it is a tree. The desired properties of $\mathscr{T}(K)$ are easily verified.
\end{proof}

\begin{figure}[ht]
\captionsetup{width=0.98\linewidth}
    \includegraphics[width=0.72\linewidth]{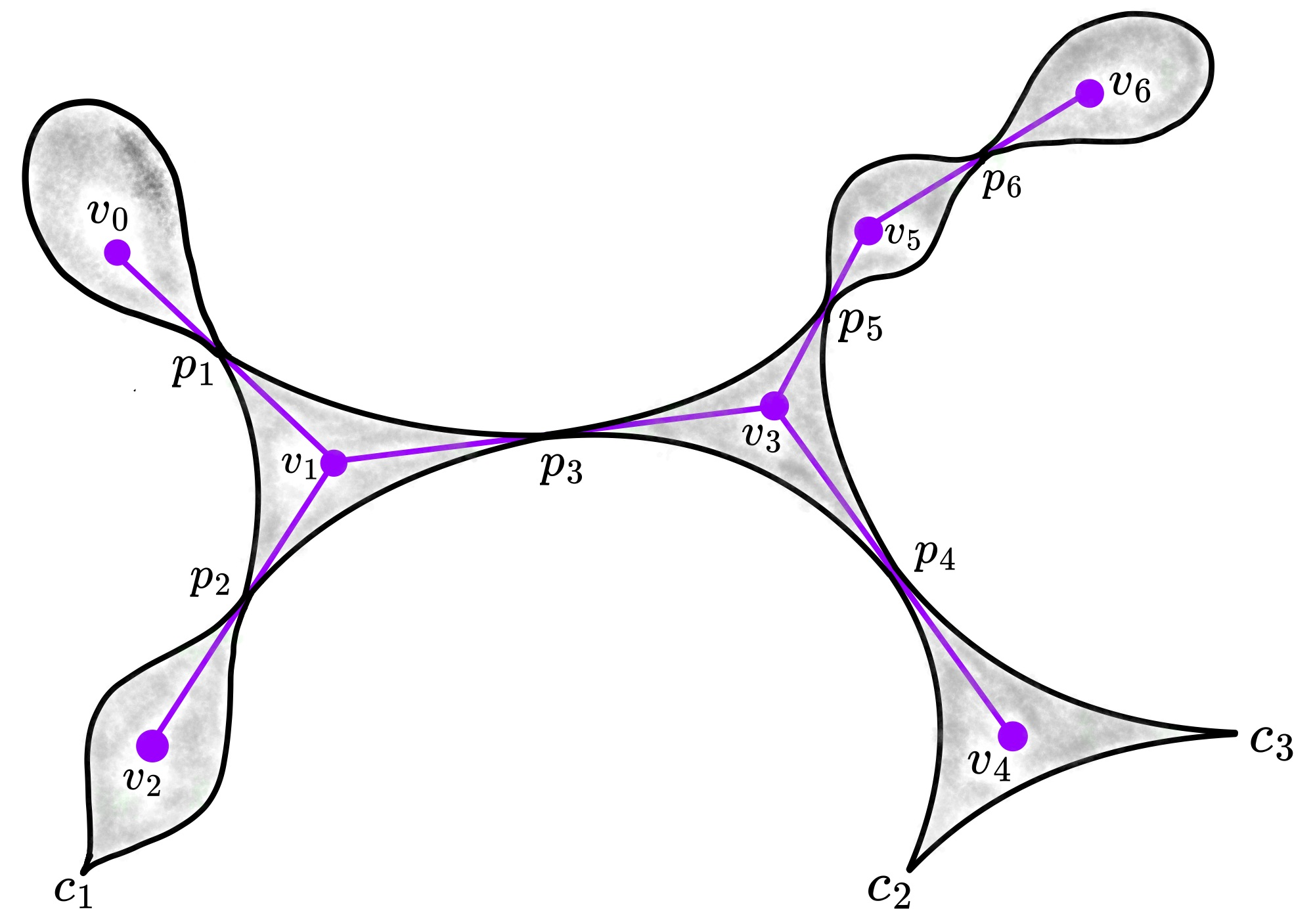}
    \caption{Depicted is the tree $\mathscr{T}(K)$ associated with a component $K$ of $\Omega^\complement$.}
    \label{fig:4.8}
\end{figure}

\begin{lem}\label{dp_lem}
Let $K$ be a component of $\Omega^\complement$ such that $\partial K$ has double points. Then, there are at least $3+\alpha$ critical points of $f$ (counted with multiplicity) in $\displaystyle f^{-1}(\partial K\cap\mathcal{S})\sqcup\bigsqcup_{j=0}^{\alpha} f^{-1}(T_{K_0^j}^\infty(\sigma))$, where $\alpha := \# D_K$, and $K_0^0,\cdots,K_0^\alpha$ are the components of the desingularization $K\setminus\mathcal{S}$.
\end{lem}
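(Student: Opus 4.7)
The plan is to associate critical points of $f$ with each of the $\alpha+1$ desingularized components $K_0^0,\dots,K_0^{\alpha}$ of $K$, and then sum the contributions using the tree $\mathscr{T}(K)$ of Lemma~\ref{droplet_tree_lem} together with the counting estimate of Lemma~\ref{tree_counting_lem}. Let $v_j$ denote the valence of the vertex of $\mathscr{T}(K)$ attached to $K_0^j$ (i.e., the number of double points on $\partial K_0^j$), let $c_j$ denote the number of cusps on $\partial K_0^j$, and set $s_j:=v_j+c_j$. Since $\mathscr{T}(K)$ has $\alpha$ edges and $\alpha+1$ vertices, we have $\sum_j v_j = 2\alpha$.

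The key observation is that, from the vantage point of the single lobe $K_0^j$, a double point on $\partial K_0^j$ is geometrically indistinguishable from a cusp: two non-singular real-analytic arcs of $\partial K$ meet tangentially there with $K_0^j$ lying to one side. Moreover, Lemma~\ref{dp_tiling_dyn_lem} supplies exactly the same escape-dynamics information ($\sigma^{-2}$-orbits converging to the singular point through repelling petals) at a double point that Lemma~\ref{cusp_tiling_dyn_lem} supplies at a cusp. Writing $T(K_0^j)$ for the number of critical points of $f$ (counted with multiplicity) lying in $f^{-1}(T_{K_0^j}^\infty(\sigma))$, I would adapt the proofs of Lemmas~\ref{one_cusp_lem} and~\ref{two_cusp_lem} to each $K_0^j$, treating both kinds of singular boundary points uniformly. \textbf{(a)} If $s_j = 1$, the proof of Lemma~\ref{one_cusp_lem} goes through mutatis mutandis, with Lemma~\ref{dp_tiling_dyn_lem} replacing Lemma~\ref{cusp_tiling_dyn_lem} when the singular point is a double point, and gives $T(K_0^j) \geq 2$: one critical point from the Step~I Riemann--Hurwitz count on the adjacent rank-$1$ tile, and a second from the hyperbolic contraction contradiction. \textbf{(b)} If $s_j = 2$, the proof of Lemma~\ref{two_cusp_lem} adapts in the same way and yields $T(K_0^j) \geq 1$ from the two-petal contraction argument. \textbf{(c)} If $s_j \geq 3$, no lower bound on $T(K_0^j)$ is needed.

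Each cusp on $\partial K_0^j$ additionally contributes a critical point of $f$ on $P$, lying in $f^{-1}(\partial K\cap\mathcal{S})$ (Section~\ref{singularity_subsec}). An elementary case check on $(v_j,c_j)$ then shows that the total critical-point contribution of $K_0^j$ to the disjoint set $f^{-1}(\partial K\cap\mathcal{S})\sqcup f^{-1}(T_{K_0^j}^\infty(\sigma))$ is at least $N(v_j)$, where $N(1)=2$, $N(2)=1$, and $N(v_j)=0$ for $v_j\geq 3$. For example, if $v_j=1$ and $c_j=1$, then $s_j=2$ gives one tiling critical point by (b) while the cusp supplies another; if $v_j=1$ and $c_j \geq 2$, then the cusps alone supply the required two critical points; the remaining combinations are similar. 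Summing over the vertices of $\mathscr{T}(K)$ and invoking Lemma~\ref{tree_counting_lem},
\[
\sum_{j} N(v_j) \;\geq\; 2n_1 + n_2 \;\geq\; \alpha + 3,
\]
where $n_i$ counts valence-$i$ vertices of $\mathscr{T}(K)$. Since the sets $f^{-1}(T_{K_0^j}^\infty(\sigma))$ are pairwise disjoint (they are preimages of different components of the tiling set), disjoint from $f^{-1}(\partial K\cap\mathcal{S})$, and each cusp on $\partial K$ lies on a unique $\partial K_0^j$, there is no overcounting, and the claimed lower bound of $3+\alpha$ follows.

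The main obstacle will be rigorously transferring the hyperbolic-contraction and modulus machinery of Lemmas~\ref{two_cusp_lem} and~\ref{one_cusp_lem} to each desingularized component. Specifically, under the contradiction hypothesis that $T_{K_0^j}^\infty(\sigma)$ contains only the stated minimum number of critical points, one must verify that (i) the successive rank-$n$ tiles over $K_0^j$ remain simply connected and $\sigma$ restricts to homeomorphisms between them, so that $T_{K_0^j}^\infty(\sigma)$ is itself simply connected and hyperbolic; (ii) each singular point on $\partial K_0^j$---whether a cusp or a double point---presents itself as a cut-point of $\partial V_j$, with $V_j := T_{K_0^j}^\infty(\sigma)\setminus K_0^j$, so that two distinct accesses to it are available in the disk uniformization (needed when $s_j = 1$); and (iii) the $\sigma^{-2}$-orbits furnished by Lemmas~\ref{cusp_tiling_dyn_lem}--\ref{dp_tiling_dyn_lem} genuinely enter and stay in the relevant accesses. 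The local picture at a double point (cf.\ Figure~\ref{dp_second_iterate_fig}), together with the fact that the two petals of $\sigma^{\circ 2}$ at a double point lie on opposite sides of the tangent line, makes this plausible, but a careful local analysis is required to close the argument.
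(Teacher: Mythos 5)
Your proof follows essentially the same route as the paper: reduce the count to the tree $\mathscr{T}(K)$ of Lemma~\ref{droplet_tree_lem}, invoke the combinatorial bound $2n_1+n_2\geq\alpha+3$ of Lemma~\ref{tree_counting_lem}, and supply the per-vertex contributions by adapting the hyperbolic-contraction proofs of Lemmas~\ref{one_cusp_lem} and~\ref{two_cusp_lem} with Lemma~\ref{dp_tiling_dyn_lem} playing the role of Lemma~\ref{cusp_tiling_dyn_lem} at double points.

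There is, however, one genuine refinement in your version that is worth pointing out. The paper's Claims~I and~II are stated solely in terms of the valence $v_j$ and assert lower bounds on the number of critical points of $f$ in $f^{-1}(T_{K_0^j}^\infty(\sigma))$ alone, and the proofs appeal to Steps~II--IV of Lemma~\ref{two_cusp_lem} (resp.\ Steps~I--II of Lemma~\ref{one_cusp_lem}) ``verbatim''. Those steps, as written, depend on the desingularized component having exactly two (resp.\ one) singular boundary points so that there are exactly two (resp.\ one) non-singular boundary arcs along which to build the sequence of rank-$n$ tiles; if $\partial K_0^j$ also carries cusps, the tile structure has more arcs and the verbatim appeal is not quite literal. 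Your bookkeeping conditions on $s_j = v_j + c_j$ instead, so that the contraction argument is only invoked when $s_j\in\{1,2\}$ (where it genuinely matches the hypotheses of Lemmas~\ref{one_cusp_lem} and~\ref{two_cusp_lem}), and the shortfall for a low-valence vertex with extra cusps is made up by the cusp critical points in $f^{-1}(\partial K\cap\cS)$. The resulting function $N(v_j)$ with $N(1)=2$, $N(2)=1$, $N(v)=0$ for $v\geq 3$, and the check that $\sum_j N(v_j) = 2n_1+n_2$, give exactly the paper's final count while being robust to the presence of cusps. This is a cleaner way to close the argument than what the paper literally writes.

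One small imprecision: a double point on $\partial K_0^j$ is not ``geometrically indistinguishable'' from a cusp --- a cusp is a singular point of a single real-analytic branch (type $(n,2)$), while a double point is the tangential meeting of two smooth branches of $\partial K$. What is true, and what your argument actually needs, is that both produce tangent-to-identity parabolic germs for $\sigma^{\circ 2}$ with attracting/repelling petals, so that Lemmas~\ref{cusp_tiling_dyn_lem} and~\ref{dp_tiling_dyn_lem} furnish the same escaping $\sigma^{-2}$-orbit information. With that phrasing fixed, the remaining verifications you flag in the last paragraph (simple connectedness of $T_{K_0^j}^\infty(\sigma)$ under the no-extra-critical-point hypothesis, the cut-point and accesses argument, and the petal geometry at a double point) are precisely what the paper handles by the corresponding Steps of Lemmas~\ref{two_cusp_lem} and~\ref{one_cusp_lem}, and there is no obstacle to carrying them over.
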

\begin{proof}
Due to the graph-theoretic Lemma~\ref{tree_counting_lem}, it is enough to consider the components of the desingularization $K\setminus\mathcal{S}$ corresponding to valence $1$ and $2$ vertices of $\mathscr{T}(K)$ to prove the above lemma. In particular, proving the following claims would provide the required lower bound on the number of critical points associated with~$K$.

\noindent\textbf{Claim I:} Let $K_0^i$ be a component of $K\setminus\mathcal{S}$ such that the vertex of $\mathscr{T}(K)$ in $\Int{K_0^i}$ has valence 2. Then, there is at least 1 critical point of $f$ in $\displaystyle f^{-1}(T_{K_0^i}^\infty(\sigma))$.

\noindent\textbf{Claim II:} Let $K_0^j$ be a component of $K\setminus\mathcal{S}$ such that the vertex of $\mathscr{T}(K)$ in $\Int{K_0^j}$ has valence 1. Then, there are at least 2 critical points of $f$ in $\displaystyle f^{-1}(T_{K_0^j}^\infty(\sigma))$.

\begin{proof}[Proof of Claim I]
Recall that $T_{K_0^i}^\infty(\sigma)$ is the component of $T^\infty(\sigma)$ containing $K_0^i$.
As in the case of exactly two cusps (Lemma~\ref{two_cusp_lem}, Step~I), we assume that there is no critical point of $f$ in $\displaystyle f^{-1}(T_{K_0^i}^\infty(\sigma))$; i.e, there is no critical value of $f$ (or equivalently of $\sigma$) in $T_{K_0^i}^\infty(\sigma)$. Then, Steps~II--IV of Lemma~\ref{two_cusp_lem} apply verbatim to the present setting to give a self-map $g$, which is an inverse branch of $\sigma^{\circ 2}$, on a simply connected subset of $T_{K_0^i}^\infty(\sigma)$ such that $g$ is a hyperbolic contraction (see Figure~\ref{fig:4.9} (right)).
\begin{figure}
\captionsetup{width=0.98\linewidth}
    \includegraphics[width=0.98\linewidth]{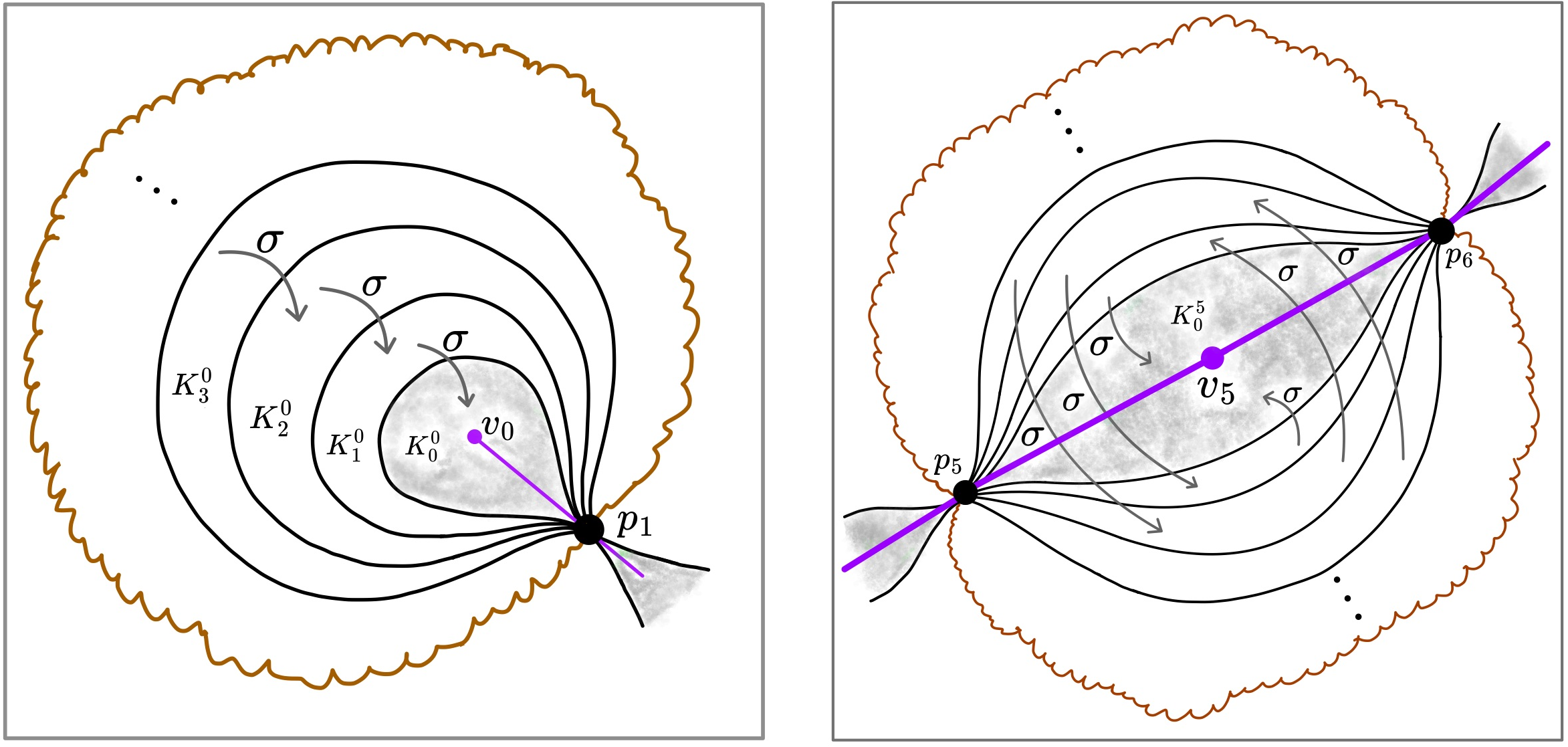} 
    \caption{Depicted is the dynamics of $\sigma$ on the component $K_0^0$ (respectively, $K_0^5$) of $K\setminus\mathcal{S}$, corresponding to the vertex $v_0$ (respectively, $v_5$) of valence $1$ (respectively, $2$) of the graph $\mathscr{T}(K)$ in Figure~\ref{fig:4.8}.}
    \label{fig:4.9}
\end{figure}
To apply the arguments of Step~V of Lemma~\ref{two_cusp_lem}, we invoke Lemma~\ref{dp_tiling_dyn_lem}, which gives us two distinct $g$-orbits converging to the two distinct double points on the boundary $\partial K_0^i$ (in Figure~\ref{fig:4.9} (right), the relevant component is $K_0^5$ and the double points on $\partial K_0^5$ are $p_5,p_6$). This contradicts the hyperbolic contraction of $g$ as explained in Step~VI of Lemma~\ref{two_cusp_lem}.
\end{proof}

\begin{proof}[Proof of Claim II]
The proof follows the scheme of the case of exactly one cusp (see Lemma~\ref{one_cusp_lem}).
As in Steps~I-II of that lemma, we have at least one critical value of $\sigma$, and hence of $f$, in $\Int{K_0}$, and assume that there is no other critical point of $f$ in $f^{-1}(T_{K_0^j}^\infty(\sigma))$. 
We now consider the dynamics of $\sigma$ on the component $T_{K_0^j}^\infty(\sigma)$ of the tiling set (see Figure~\ref{fig:4.9} (left)), and note that the Steps~III--VIII of Lemma~\ref{one_cusp_lem} can be applied to the situation at hand with exactly one modification: we need to appeal to Lemma~\ref{dp_tiling_dyn_lem} (instead of Lemma~\ref{cusp_tiling_dyn_lem}) in Step~VII.
This provides us with a contraction mapping on the hyperbolic disk with two orbits converging to two distinct points on $\partial\D$. This is a contradiction, which proves the claim.
\end{proof}
\end{proof}

\begin{cor}\label{dp_pole_cor}
Let $K$ be as in Lemma~\ref{dp_lem}, and assume that one of the two following conditions holds true.
\noindent\begin{enumerate}
    \item $\Omega$ is unbounded and $\displaystyle\infty\in\bigsqcup_{j=0}^\alpha\widehat{T_{K_0^j}^\infty}(\sigma)$. 
    \item $\Omega$ is bounded and $\displaystyle\infty\in\bigsqcup_{j=0}^\alpha\Int{K_0^j}$.
\end{enumerate} 
Then, at least $\max\{\alpha + d_f-n_\Omega+1,\alpha+3\}$ critical points of $f$ (counted with multiplicity) lie in $\displaystyle f^{-1}(\partial K\cap\mathcal{S})\sqcup\bigsqcup_{j=0}^\alpha f^{-1}(\widehat{T_{K_0^j}^\infty}(\sigma))$.
\end{cor}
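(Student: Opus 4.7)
The plan is to mirror the strategy used in Corollaries~\ref{non_sing_pole_cor},~\ref{two_cusp_pole_cor}, and~\ref{one_cusp_pole_cor}: start from Lemma~\ref{dp_lem}, isolate within its list of $\alpha+3$ critical points a subcollection that provably cannot lie in $f^{-1}(\infty)$, and then adjoin the $d_f-n_\Omega-1$ (respectively $d_f-n_\Omega$) critical points of $f$ over $\infty$ supplied by Section~\ref{pole_crit_points_subsec}, which the hypothesis on $\infty$ places inside the target set $f^{-1}(\partial K\cap\mathcal{S})\sqcup\bigsqcup_j f^{-1}(\widehat{T_{K_0^j}^\infty}(\sigma))$. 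The $\max$ in the conclusion then appears automatically, with the $\alpha+3$ term coming directly from Lemma~\ref{dp_lem} and the $\alpha+d_f-n_\Omega+1$ term from this safe-plus-pole accounting.

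For Part (1), the hypothesis singles out a unique index $j_0$ with $\infty\in\widehat{T_{K_0^{j_0}}^\infty}(\sigma)$. For every $j\neq j_0$, the Lemma~\ref{dp_lem} critical points attached to $j$ are automatically safe, since they land in $T_{K_0^j}^\infty(\sigma)$ while $\infty\notin T_{K_0^j}^\infty(\sigma)$. For $j=j_0$, I would reopen Claim~II of Lemma~\ref{dp_lem}: when $v_{j_0}$ has valence $1$ in $\mathscr{T}(K)$, the first critical point it produces lies in $f^{-1}(\Int{K_0^{j_0}})\subset f^{-1}(\Omega^\complement)$, and since $\Omega$ is unbounded this critical point cannot map to $\infty$; only the second, unlocalized critical point from the contradiction step is in danger of being a pole. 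A case analysis on the valence of $v_{j_0}$, combined with the tree inequality $2n_1+n_2\geq\alpha+3$ of Lemma~\ref{tree_counting_lem}, then yields at least $\alpha+2$ safe critical points (counted with multiplicity), which combine disjointly with the $d_f-n_\Omega-1$ pole critical points to give $\alpha+d_f-n_\Omega+1$.

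For Part (2), the same framework applies, but $\infty\in\Int{K_0^{j_0}}\subset K$ now undermines the safety of the Claim~II critical point in $f^{-1}(\Int{K_0^{j_0}})$: it too could lie in $f^{-1}(\infty)$. In the worst case, when $v_{j_0}$ has valence $1$, one must concede both critical points that Claim~II supplies for $j_0$, leaving $2n_1+n_2-2\geq\alpha+1$ safe critical points from Lemma~\ref{dp_lem}; adding the $d_f-n_\Omega$ bounded-case pole critical points recovers the claimed $\alpha+d_f-n_\Omega+1$.

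The main technical point is the bookkeeping step in Part (1), which forces one to reopen the proof of Lemma~\ref{dp_lem} rather than invoke it as a black box, so as to keep track of \emph{which} critical points produced by Claim~II are geographically constrained (inside $f^{-1}(\Int{K_0^j})\subset f^{-1}(\Omega^\complement)$) and hence automatically safe, as opposed to merely existing somewhere in $f^{-1}(T_{K_0^j}^\infty(\sigma))$. Once this is in place, the inclusion of the pole critical points in the target set and the disjointness of safe and pole critical points are immediate.
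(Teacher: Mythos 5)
Your proposal is correct and follows essentially the same strategy as the paper's proof: reopen Lemma~\ref{dp_lem} to perform a case analysis on the valence of the vertex $v_i$ corresponding to the component $K_0^i$ carrying $\infty$, strip out that vertex's contribution to the tree inequality $2n_1+n_2\geq\alpha+3$, recover (in the unbounded case with valence~$1$) the Step~I critical point of Claim~II as a "safe" critical point since its image lies in $\Int{K_0^i}\subset\Omega^\complement$ and hence avoids $\infty$, and then adjoin the $d_f-n_\Omega-1$ (unbounded) or $d_f-n_\Omega$ (bounded) critical points of $f$ over $\infty$ from Section~\ref{pole_crit_points_subsec}, which the hypothesis places inside $f^{-1}(\widehat{T_{K_0^i}^\infty}(\sigma))$. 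The accounting in your worst cases ($\alpha+2$ safe $+\,(d_f-n_\Omega-1)$ pole for Part~(1); $\alpha+1$ safe $+\,(d_f-n_\Omega)$ pole for Part~(2)) matches the paper's Case~3 of Parts~(1) and~(2) exactly, and you correctly identify the central technical point, namely that one must track \emph{where} each Lemma~\ref{dp_lem} critical point lands in order to certify disjointness from the pole critical points.
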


\begin{proof}[Proof of Part~$\mathrm{(1)}$]
Let $K_0^i$ be the component of the desingularization $K\setminus\mathcal{S}$ such that $\infty\in\widehat{T_{K_0^i}^\infty}(\sigma)$ and $v_i$ be the vertex of the tree $\mathscr{T}(K)$ corresponding to $\Int{K_0^i}$ (see Lemma~\ref{droplet_tree_lem}). We consider the following sub-cases.

\noindent\textbf{Case 1: $v_i$ has valence at least $3$.}
Referring to the proof of Lemma~\ref{dp_lem}, as the component $K_0^i$ corresponds to a vertex of valence at least $3$ in the tree $\mathscr{T}(K)$, the meromorphic map $f$ has at least $3+\alpha$ critical points (counted with multiplicity) in $\displaystyle f^{-1}(\partial K\cap\mathcal{S})\sqcup\bigsqcup_{\substack{j=0\\ j \neq i}}^{\alpha} f^{-1}(T_{K_0^j}^\infty(\sigma))$. None of these critical points are poles of $f$; indeed, as $\infty\in\widehat{T_{K_0^i}^\infty}(\sigma)$, the poles of $f$ lie in $f^{-1}\left(\widehat{T_{K_0^i}^\infty}(\sigma)\right)$. Counting them along with the $(d_f-n_\Omega-1)$ critical points of $f$ in $f^{-1}(\infty)$ (cf. Section~\ref{pole_crit_points_subsec}) yield $(\alpha + d_f-n_\Omega+2)$ critical points of $f$ (counted with multiplicity) in $\displaystyle f^{-1}(\partial K\cap\mathcal{S})\sqcup\bigsqcup_{j=0}^\alpha f^{-1}(\widehat{T_{K_0^j}^\infty}(\sigma))$.

\noindent\textbf{Case 2: $v_i$ has valence $2$.}
Similarly as in Case 1, we refer to the proof of Lemma~\ref{dp_lem}. As the component $K_0^i$ corresponds to a vertex of valence $2$ in the tree $\mathscr{T}(K)$, the map $f$ has at least $(3+\alpha)-1 = \alpha+2$ critical points (counted with multiplicity) in $\displaystyle f^{-1}(\partial K\cap\mathcal{S})\sqcup\bigsqcup_{\substack{j=0\\ j \neq i}}^{\alpha} f^{-1}(T_{K_0^j}^\infty(\sigma))$, none of which are poles of $f$. Counting them with the $(d_f-n_\Omega-1)$ critical points of $f$ in $f^{-1}(\infty)\subset f^{-1}\left(\widehat{T_{K_0^i}^\infty}(\sigma)\right)$ yield $(\alpha + d_f-n_\Omega+1)$ critical points of $f$ (counted with multiplicity) in $\displaystyle f^{-1}(\partial K\cap\mathcal{S})\sqcup\bigsqcup_{j=0}^\alpha f^{-1}(\widehat{T_{K_0^j}^\infty}(\sigma))$.

\noindent\textbf{Case 3: $v_i$ has valence $1$.}
Again, we refer to the proof of Lemma~\ref{dp_lem}. As the component $K_0^i$ corresponds to a vertex of valence $1$ in the tree $\mathscr{T}(K)$, the map $f$ has at least $(3+\alpha)-2 = \alpha+1$ critical points (counted with multiplicity) in $\displaystyle f^{-1}(\partial K\cap\mathcal{S})\sqcup\bigsqcup_{\substack{j=0\\ j \neq i}}^{\alpha} f^{-1}(T_{K_0^j}^\infty(\sigma))$, none of which are poles of $f$. By Claim II in the proof of Lemma~\ref{dp_lem}, we obtain a critical point of $\sigma$ in $\Int{K_1^i}$, which provides us with a critical point $c$ of $f$ in $f^{-1}(\Int K_0^i)$. Clearly, $c$ is not a pole of $f$ as $f(c)\in\Int{K_0^i}$ but $\infty \in \Ext{K_0^i}$. Counting the critical points obtained so far with the $(d_f-n_\Omega-1)$ critical points of $f$ in $f^{-1}(\infty)\subset f^{-1}\left(\widehat{T_{K_0^i}^\infty}(\sigma)\right)$ yield $(\alpha + d_f-n_\Omega+1)$ critical points of $f$ (counted with multiplicity) in $\displaystyle f^{-1}(\partial K\cap\mathcal{S})\sqcup\bigsqcup_{j=0}^\alpha f^{-1}(\widehat{T_{K_0^j}^\infty}(\sigma))$.
\end{proof}

\begin{proof}[Proof of Part~$\mathrm{(2)}$]
Let $K_0^i$ be the component of the desingularization $K\setminus\mathcal{S}$ such that $\infty\in\Int{K_0^i}$, and let $v_i$ be the vertex of the tree $\mathscr{T}(K)$ corresponding to $K_0^i$. As in the Part~(1), we consider the following sub-cases. 

\noindent\textbf{Case 1: $v_i$ has valence at least $3$.}
Arguing as in the unbounded case (using the proof of Lemma~\ref{dp_lem}), we can find at least $3+\alpha$ critical points of $f$ (counted with multiplicity) in $\displaystyle f^{-1}(\partial K\cap\mathcal{S})\sqcup\bigsqcup_{\substack{j=0\\ j \neq i}}^{\alpha} f^{-1}(T_{K_0^j}^\infty(\sigma))$ and $(d_f-n_\Omega)$ critical points of $f$ in $f^{-1}(\infty)\subset f^{-1}\left(\widehat{T_{K_0^i}^\infty}(\sigma)\right)$ (cf. Section~\ref{pole_crit_points_subsec}). This yields $(\alpha + d_f-n_\Omega+3)$ critical points of $f$ (counted with multiplicity) in $\displaystyle f^{-1}(\partial K\cap\mathcal{S})\sqcup\bigsqcup_{j=0}^\alpha f^{-1}(\widehat{T_{K_0^j}^\infty}(\sigma))$.

\noindent\textbf{Case 2: $v_i$ has valence at least $2$.}
Similarly as in Case 1 (referring to the proof of Lemma~\ref{dp_lem}), the map $f$ has at least $\alpha+2$ critical points (counted with multiplicity) in $\displaystyle f^{-1}(\partial K\cap\mathcal{S})\sqcup\bigsqcup_{\substack{j=0\\ j \neq i}}^{\alpha} f^{-1}(T_{K_0^j}^\infty(\sigma))$, and $(d_f-n_\Omega)$ critical points in $f^{-1}(\infty)\subset f^{-1}\left(\widehat{T_{K_0^i}^\infty}(\sigma)\right)$. Thus, we obtain $(\alpha + d_f-n_\Omega+2)$ critical points of $f$ (counted with multiplicity) in $\displaystyle f^{-1}(\partial K\cap\mathcal{S})\sqcup\bigsqcup_{j=0}^\alpha f^{-1}(\widehat{T_{K_0^j}^\infty}(\sigma))$.

\noindent\textbf{Case 3: $v_i$ has valence at least $1$.}
As in the third case of Part~(1) of this corollary, the map $f$ has at least $\alpha+1$ critical points (counted with multiplicity) in $\displaystyle f^{-1}(\partial K\cap\mathcal{S})\sqcup\bigsqcup_{\substack{j=0\\ j \neq i}}^{\alpha} f^{-1}(T_{K_0^j}^\infty(\sigma))$, and $(d_f-n_\Omega)$ critical points in $f^{-1}(\infty)\subset f^{-1}\left(\widehat{T_{K_0^i}^\infty}(\sigma)\right)$. This gives the desired number of critical points; i.e., $(\alpha + d_f-n_\Omega+1)$ (counted with multiplicity), in $\displaystyle f^{-1}(\partial K\cap\mathcal{S})\sqcup\bigsqcup_{j=0}^\alpha f^{-1}(\widehat{T_{K_0^j}^\infty}(\sigma))$.
\end{proof}

\subsubsection{Proof of Proposition~\ref{individual_contribution_prop}}
Let $K$ be a component of $\Omega^\complement$. 

If $K$ is non-singular, then $\partial K\cap\mathcal{S}=\emptyset$, and hence $\# D_K=0$ and $K\setminus\mathcal{S}=K$. In this case, the existence of the desired critical points of $f$ follows from Lemma~\ref{non_sing_lem}.

If $\partial K$ has cusps, but no double points, then $\# D_K=0$ and $K_0:=K\setminus\mathcal{S}$ is connected. As mentioned in Section~\ref{4.2.2}, if there are at least three cusps on $\partial K$, then the conclusion of Proposition~\ref{individual_contribution_prop} is trivially satisfied for the component $K$. On the other hand, if $K$ has at most two cusps, then Lemmas~\ref{two_cusp_lem} and~\ref{one_cusp_lem} provide us with the required critical points of $f$.

Finally, if $\partial K$ has at least one double point, then the result follows from Lemma~\ref{dp_lem}.

\section{Proofs of the main theorems}\label{pf_main_thm_sec}

\subsection{Bounding connectivity and double points}\label{pf_main_thm_1_subsec}

In this subsection, we will prove Theorem~\ref{main_thm_1} as an improvement of Theorem~\ref{special_case_thm}, by taking node data and orders of singular points into account. Our goal here is to obtain the following inequality.
\begin{equation}
\mathrm{conn}(\Omega) + \# D + \sum_{p\in D} \delta_p + \sum_{p\in C} \delta_p \leq 
\min \{d_f+n_\Omega-2, 2d_f-4\}.
\label{main_thm_1_eqn}
\end{equation}
We will further improve the above upper bound in the case that $\Omega$ has a node at $\infty$ as follows:
\begin{equation}
\mathrm{conn}(\Omega) + \# D + \sum_{p\in D} \delta_p + \sum_{p\in C} \delta_p \leq d_f +n_\Omega - 3.
\label{infty_case_eqn}
\end{equation}
Recall that,
\[\delta_p=
\begin{cases}
\lfloor n/4\rfloor, & \text{ if $p$ is a cusp of type } (n,2),\\
\lfloor n/2\rfloor, & \text{ is $p$ a double point with contact order } n,
\end{cases}
\]
in the above inequality.

Observe that
$$
d_f+n_\Omega-2\leq 2d_f-4 \iff n_\Omega\leq d_f-2.
$$
In light of this observation, we will assume that $n_\Omega\leq d_f-2$ in the proof of Theorem~\ref{main_thm_1}. Before proceeding with the proof, we introduce the following sets of critical points of $f$.

\[
\crit_C(f) \coloneq \{c\in\crit(f)\ |\ f(c)\ \textrm{is a cusp on } \partial\Omega\}.
\]

\begin{align*}
&\crit_\cS(f) \coloneq\\ 
& \{c\in\crit(f)\ |\ \{\sigma^{\circ n}(f(c))\}_n\ \textrm{ is an infinite sequence, and }\lim_{n\rightarrow\infty}\sigma^{\circ n}(f(c))\in\cS\},
\end{align*}
which is the set of all critical points of $f$ whose corresponding critical values converge non-trivially to a point in $\cS$ under iterates of $\sigma$.

\[
\crit_T(f) \coloneq \{c\in\crit(f)\ |\ f(c)\in\ T^\infty(\sigma)\}.
\]

\[
\crit_P(f) \coloneq \{c\in\crit(f)\ |\ f(c)=\infty\}.
\]
In what follows, the notation $\#_m \mathrm{crit}_\bullet(f)$ will stand for the number of critical points of $f$ in $\mathrm{crit}_\bullet(f)$, counted with multiplicity.
The next lemma establishes certain relations among the above types of critical points. 
\begin{lem}\label{crit_relation_lem}
The sets $\crit_\cS(f)$, $\crit_T(f)$, and $\crit_C(f)$ are pairwise disjoint, and $\crit_P(f)\cap\crit_C(f)=\emptyset$. Further, we have the following assertions.
\begin{enumerate}
\item If $\infty\in\mathscr{K}(\sigma)$, then the sets $\crit_P(f)$ and $\crit_T(f)$ are disjoint. Further, if $n_\Omega\leq d_f-2$, then 
$$
\#_m\left(\crit_P(f)\cup\crit_\cS(f)\right)\geq \left(d_f-n_\Omega-2\right)+\left(\sum_{p\in D} \delta_p + \sum_{p\in C} \delta_p\right).
$$
Additionally, if $\Omega$ has a node at $\infty$, then $\crit_P(f)\cap\crit_\cS(f)=\emptyset$, and hence
$$
\#_m\left(\crit_P(f)\cup\crit_\cS(f)\right)\geq \left(d_f-n_\Omega-1\right)+\left(\sum_{p\in D} \delta_p + \sum_{p\in C} \delta_p\right).
$$
\item If $\infty\in T^\infty(\sigma)$, then the sets $\crit_\cS(f)$ and $\crit_P(f)$ are disjoint.
\end{enumerate}
\end{lem}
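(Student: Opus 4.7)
The pairwise disjointness of $\crit_\cS(f)$, $\crit_T(f)$, and $\crit_C(f)$, together with $\crit_P(f)\cap\crit_C(f)=\emptyset$, will be established by direct analysis of the $\sigma$-orbit of $f(c)$. Every cusp is $\sigma$-fixed (since $\sigma\vert_{\partial\Omega}=\mathrm{id}$), so for $c\in\crit_C(f)$ the orbit of $f(c)$ is constant at a point of $\mathcal{S}$, hence neither belongs to $T^\infty(\sigma)$ (all iterated $\sigma$-preimages of points in $T^0(\sigma)$ stay outside $\mathcal{S}$) nor converges non-trivially to $\mathcal{S}$. For $c\in\crit_T(f)$ the orbit of $f(c)$ reaches $T^0(\sigma)$ in finite time, after which it either terminates upon entering $\Int T(\sigma)$ (where $\sigma$ is undefined) or is eventually constant at a non-singular boundary fixed point; either way it cannot converge non-trivially to $\mathcal{S}$. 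Finally, $\crit_P(f)\cap\crit_C(f)=\emptyset$ is immediate from the standing assumption $\infty\notin\partial\Omega$.

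Under the hypothesis $\infty\in\mathscr{K}(\sigma)$ of part~(1), the $\sigma$-invariant partition $\widehat{\C}=T^\infty(\sigma)\sqcup\mathscr{K}(\sigma)$ gives $\crit_P(f)\cap\crit_T(f)=\emptyset$ directly, and this hypothesis places us in the unbounded setting (the bounded case would put $\infty\in T^0(\sigma)\subset T^\infty(\sigma)$); Section~\ref{pole_crit_points_subsec} therefore yields $\#_m\crit_P(f)=d_f-1-n_\Omega$. By Lemmas~\ref{higher_order_cusp_lem} and~\ref{higher_order_dp_lem}, for each $p\in C\cup D$ there are $\delta_p$ distinct critical orbits of $\sigma$ converging non-trivially to $p$ through $\delta_p$ disjoint cycles of immediate attracting basins at $p$; via the bijection between critical points of $\sigma$ in $\Omega$ and critical points of $f$ in $W^-$, these produce $\sum_{p\in C\cup D}\delta_p$ distinct ``attached'' critical points of $f$ inside $\crit_\cS(f)$.

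The crucial combinatorial input is that at most one attached critical point can lie in $\crit_P(f)$: if $c$ is attached at $p\in\mathcal{S}$ with corresponding $\sigma$-critical point $z_c$ in an attracting basin cycle $\mathcal{C}$ at $p$, then $f(c)=\sigma(z_c)$ lies in the basin immediately succeeding $z_c$ in $\mathcal{C}$, so $f(c)=\infty$ forces $\infty\in\mathcal{C}$; since the attracting basin cycles across all singular boundary points of $\partial\Omega$ are pairwise disjoint, $\infty$ lies in at most one such cycle. We now split on whether the $\sigma$-orbit of $\infty$ converges to $\mathcal{S}$. In the negative case $\crit_P(f)\cap\crit_\cS(f)=\emptyset$, so additivity gives $\#_m(\crit_P(f)\cup\crit_\cS(f))\geq(d_f-1-n_\Omega)+\sum_p\delta_p$. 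In the positive case $\crit_P(f)\subseteq\crit_\cS(f)$; writing $c^\ast$ (of $f$-multiplicity $\mu^\ast\geq 1$) for the unique possible overlap between attached critical points and $\crit_P(f)$, the attached critical points and $\crit_P(f)\setminus\{c^\ast\}$ form disjoint subsets of $\crit_\cS(f)$ whose total multiplicity is at least $(\mu^\ast+\sum_p\delta_p-1)+(d_f-1-n_\Omega-\mu^\ast)=(d_f-n_\Omega-2)+\sum_p\delta_p$, and since $\crit_P(f)\cup\crit_\cS(f)=\crit_\cS(f)$ the claimed bound follows.

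The stronger statement under a node at $\infty$ is immediate: $\sigma(\infty)=\infty$ makes the forward orbit of $\infty$ constant, placing us in the first case above and producing the improved bound $d_f-n_\Omega-1+\sum_p\delta_p$. For part~(2), when $\infty\in T^\infty(\sigma)$ the orbit of $\infty$ enters $T^0(\sigma)$ in finite time and cannot converge non-trivially to $\mathcal{S}$, so $\crit_P(f)\cap\crit_\cS(f)=\emptyset$. The main subtlety throughout is the careful multiplicity bookkeeping around the unique possible overlap between pole critical points and attached singular-boundary critical points of $\sigma$; this is exactly what differentiates the general bound $d_f-n_\Omega-2$ from the sharper $d_f-n_\Omega-1$ that becomes available when $\Omega$ has a node at $\infty$.
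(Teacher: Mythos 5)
Your proposal is correct and follows the same essential strategy as the paper: the pairwise disjointness comes from tracking where the critical values live relative to the invariant partition $\widehat{\C}=T^\infty(\sigma)\sqcup\mathscr{K}(\sigma)$, and the key count in part (1) is obtained by combining the $d_f-n_\Omega-1$ pole critical points (Section~\ref{pole_crit_points_subsec}) with the $\Delta$ attached critical points from Lemmas~\ref{higher_order_cusp_lem} and~\ref{higher_order_dp_lem}, noting that at most one of the latter can map to $\infty$ since the attracting petal cycles at the various singular points are pairwise disjoint and $\infty$ lies in at most one of them. One stylistic difference: the paper phrases the disjointness argument by simply classifying the images $f(\crit_\cS(f))\subset\Int{\mathscr{K}(\sigma)}$, $f(\crit_T(f))\subset T^\infty(\sigma)$, $f(\crit_C(f))\subset\partial T^\infty(\sigma)\cap\partial\Omega$ as lying in mutually disjoint sets, rather than running the forward orbit as you do, and it obtains the final bound directly via the one-line inequality $\#_m(\crit_P(f)\cup\crit_\cS(f))\geq(d_f-n_\Omega-1)+(\Delta-1)$ without your case split on whether the orbit of $\infty$ converges to $\cS$ and without tracking the multiplicity $\mu^\ast$ of the potential overlap $c^\ast$; your bookkeeping is correct but more elaborate than necessary, since the ``at most one overlap'' observation already yields the bound uniformly.
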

\begin{proof}
The first statement follows from the observations that $f(\crit_\cS(f))\subset \Int{\mathscr{K}(\sigma)}$, $f(\crit_T(f))\subset T^\infty(\sigma)$, $f(\crit_C(f))\subset \partial T^\infty(\sigma)\cap\partial\Omega$, and that the latter sets are mutually disjoint.
The second statement is a consequence of the fact that $f(\crit_P(f))=\{\infty\}$, and $\infty\notin\partial\Omega$.

(1) As $f(\crit_P(f))=\{\infty\}\subset \mathscr{K}(\sigma)$, $f(\crit_T(f))\subset T^\infty(\sigma)$, and the latter sets are disjoint, it follows that $\crit_P(f)\cap\crit_T(f)=\emptyset$.

Let us now assume that $n_\Omega\leq d_f-2$. By Section~\ref{pole_crit_points_subsec}, the set $\crit_P(f)$ contains at least $(d_f-n_\Omega-1)$ critical points of $f$, counted with multiplicity. By definition, these $(d_f-n_\Omega-1)$ critical points of $f$ are mapped to $\infty$ under $f$. On the other hand, by Lemmas~\ref{higher_order_cusp_lem} and~\ref{higher_order_dp_lem}, the set $\crit_{\cS}(f)$ contains $\Delta:=\left(\sum_{p\in D} \delta_p + \sum_{p\in C} \delta_p\right)$ many distinct critical points and their $f-$images lie in distinct attracting petals. Hence, at most one of these $\Delta$ critical points in $\crit_{\cS}(f)$ can lie in $\crit_P(f)$. Thus, we get at least
$$
(d_f-n_\Omega-1)+(\Delta-1)=(d_f-n_\Omega-2)+\Delta
$$
critical points in the union $\crit_P(f)\cup\crit_\cS(f)$, counted with multiplicity.

Additionally, suppose that $\Omega$ has a node at $\infty$. By way of contradiction, let $c\in\crit_P(f)\cap\crit_\cS(f)$. Then,  $\sigma(f(c))=\sigma(\infty)=\infty$, since the poles of $R_\Omega$ are also poles of $\sigma$ (cf. \cite[Lemma~3.1]{LM16}). On the other hand, as $c\in\crit_\cS(f)$, the orbit $\{\sigma^{\circ n}(\infty)\}_n$ must be infinite, which is a contradiction. Hence, $\crit_P(f)\cap\crit_\cS(f)=\emptyset$. The desired lower bound on $\#_m\left(\crit_P(f)\cup\crit_\cS(f)\right)$ now follows as in the previous paragraph (using Section~\ref{pole_crit_points_subsec} and Lemmas~\ref{higher_order_cusp_lem},~\ref{higher_order_dp_lem}).

(2) Since $f(\crit_\cS(f))\subset\mathscr{K}(\sigma)$ and $f(\crit_P(f))=\{\infty\}\subset T^\infty(\sigma)$, we conclude that $\crit_\cS(f)\cap\crit_P(f)=\emptyset$.
\end{proof}

\begin{proof}[Proof of Theorem~\ref{main_thm_1}]
We consider the cases of bounded and unbounded $\Omega$ separately. Let us set 
$$
\Delta:=\left(\sum_{p\in D} \delta_p + \sum_{p\in C} \delta_p\right).
$$
\smallskip

\noindent\textbf{Unbounded case.}
We consider the following sub-cases.
\smallskip

\noindent\textbf{Case U1:} $\infty \in \mathscr{K}(\sigma)$.
The critical points of $f$ obtained in Proposition~\ref{individual_contribution_prop} lie in the set $\mathrm{crit}_T(f)\sqcup\mathrm{crit}_C(f)$. Recall from Section~\ref{pole_crit_points_subsec} that $\#_m \mathrm{crit}_P(f)=d_f-n_\Omega-1$. Therefore, by Proposition~\ref{individual_contribution_prop} and Lemma~\ref{crit_relation_lem},
\begin{align*}
& \#_m\left(\mathrm{crit}_{T}(f)\cup\mathrm{crit}_C(f)\cup\mathrm{crit}_P(f)\cup\mathrm{crit}_{\cS}(f)\right) = \\
& \#_m\mathrm{crit}_{T}(f)+\#_m\mathrm{crit}_C(f)+\#_m\left(\mathrm{crit}_P(f)\cup\mathrm{crit}_{\cS}(f)\right)\\
& \geq \sum\limits_{\substack{\mathrm{components\ K}\\ \mathrm{of\ \Omega^\complement}}} (\# D_K + 3) + (d_f-n_\Omega-2) + \Delta.
\end{align*}

Thanks to Proposition~\ref{schwarz_crit_pnt_prop} and the above inequality, we conclude that
\begin{align*}
\sum\limits_{\substack{\mathrm{components\ K}\\ \mathrm{of\ \Omega^\complement}}} (\# D_K + 3) + (d_f-n_\Omega-2) + \Delta & \leq 2d_f+2k-4\\
\implies (\# D + 3k) + (d_f-n_\Omega-2) + \Delta & \leq 2d_f+2k-4\\
\implies \# D + k + \Delta & \leq d_f+n_\Omega-2,
\end{align*}
where $k=\mathrm{conn}(\Omega)$.

Note that if $\Omega$ has a node at $\infty$, then Lemma~\ref{crit_relation_lem} implies that 
\begin{align*}
& \#_m\left(\mathrm{crit}_{T}(f)\cup\mathrm{crit}_C(f)\cup\mathrm{crit}_P(f)\cup\mathrm{crit}_{\cS}(f)\right) = \\
& \#_m\mathrm{crit}_{T}(f)+\#_m\mathrm{crit}_C(f)+\#_m\left(\mathrm{crit}_P(f)\cup\mathrm{crit}_{\cS}(f)\right)\\
& \geq \sum\limits_{\substack{\mathrm{components\ K}\\ \mathrm{of\ \Omega^\complement}}} (\# D_K + 3) + (d_f-n_\Omega-1) + \Delta.\\
&\mathrm{So}, \sum\limits_{\substack{\mathrm{components\ K}\\ \mathrm{of\ \Omega^\complement}}} (\# D_K + 3) + (d_f-n_\Omega-1) + \Delta  \leq 2d_f+2k-4\\
&\implies \# D + k + \Delta  \leq d_f+n_\Omega-3,
\end{align*}
as required.
\smallskip

\noindent\textbf{Case U2: $\infty \in T^\infty(\sigma)$.}
In this case, there exist $n\geq 1$ and a component $K_0$ of $T^0(\sigma)$ such that $\sigma^{\circ n}(\infty)\in K_0$. Note that $\overline{K_0}$ is contained in some component $K$ of~$\Omega^\complement$.
\smallskip

\noindent\textbf{Case U2a: $\partial K$ has no double points.} Under this assumption, we have that $K=\overline{K_0}$. We refer to case $(1)$ of Corollaries~\ref{non_sing_pole_cor}, \ref{two_cusp_pole_cor} and \ref{one_cusp_pole_cor} for this subcase. The critical points of $f$ obtained in these corollaries lie in the set $\mathrm{crit}_T(f)\cup\mathrm{crit}_P(f)\cup\mathrm{crit}_C(f)$, and we have
\begin{align*}
&\#_m \mathrm{crit}_T(f)\cup\mathrm{crit}_P(f)\cup\mathrm{crit}_C(f)\\
\geq & \max\{d_f-n_\Omega+1,3\} +  \sum\limits_{\substack{\mathrm{components\ K' \mathrm{of\ \Omega^\complement}}\\ K'\neq K}}  (\# D_{K'} + 3).
\end{align*} 
Note that by Lemma~\ref{crit_relation_lem}, the set $\mathrm{crit}_{\cS}(f)$ is disjoint from $\mathrm{crit}_T(f)\cup\mathrm{crit}_P(f)\cup\mathrm{crit}_C(f)$. Combining the above facts with Lemmas~\ref{higher_order_cusp_lem} and~\ref{higher_order_dp_lem}, we see that
\begin{align*}
& \#_m\left(\mathrm{crit}_{T}(f)\cup\mathrm{crit}_C(f)\cup\mathrm{crit}_P(f)\cup\mathrm{crit}_{\cS}(f)\right)=\\
& \#_m \left(\mathrm{crit}_{T}(f)\cup\mathrm{crit}_C(f)\cup\mathrm{crit}_P(f)\right) + \#_m\mathrm{crit}_{\cS}(f) \\
\geq & \max\{d_f-n_\Omega+1,3\} +  \sum\limits_{\substack{\mathrm{components\ K' \mathrm{of\ \Omega^\complement}}\\ K'\neq K}}  (\# D_{K'} + 3) + \Delta.\\
\mathrm{So},\ & (d_f-n_\Omega+1) + (\# D + 3(k-1)) + \Delta \leq 2d_f+2k-4,\\
\implies & \# D + k + \Delta \leq d_f+n_\Omega-2.
\end{align*}
(In the above computation, we used the hypothesis that $n_\Omega\leq d_f-2$.)
\smallskip

\noindent\textbf{Case U2b: $\partial K$ has $\alpha>0$ many double points.} In this case, we have that $\overline{K_0}\subsetneq K$. We refer to case $(1)$ of Corollary~\ref{dp_pole_cor} for this subcase. The critical points of $f$ obtained in this corollary lie in the set $\mathrm{crit}_T(f)\cup\mathrm{crit}_P(f)\cup\mathrm{crit}_C(f)$. As in the previous case, this observation, in combination with Lemmas~\ref{higher_order_cusp_lem},~\ref{higher_order_dp_lem}, and~\ref{crit_relation_lem}, yield the following inequality:
\begin{align*}
& \#_m\left(\mathrm{crit}_{T}(f)\cup\mathrm{crit}_C(f)\cup\mathrm{crit}_P(f)\cup\mathrm{crit}_{\cS}(f)\right)\\
\geq & \max\{\alpha+d_f-n_\Omega+1,\alpha+3\} +  \sum\limits_{\substack{\mathrm{components\ K' \mathrm{of\ \Omega^\complement}}\\ K'\neq K}}  (\# D_{K'} + 3) + \Delta,\\
\implies & (\alpha+d_f-n_\Omega+1) + ((\# D - \alpha) + 3(k-1)) + \Delta \leq 2d_f+2k-4,\\
\implies & \# D + k  + \Delta \leq d_f+n_\Omega-2.
\end{align*}
(Once again, we used in the above computation the hypothesis that $n_\Omega\leq d_f-2$.)
\smallskip

\noindent\textbf{Bounded case.} Let $K$ be a component of $\Omega^\complement$ such that $\infty\in\Int{K}$.
The proof in this case is similar to Case U2 when $\Omega$ is unbounded, combining case (2) of Corollaries~\ref{non_sing_pole_cor}, \ref{two_cusp_pole_cor}, \ref{one_cusp_pole_cor} and \ref{dp_pole_cor} with Lemmas~\ref{higher_order_cusp_lem},~\ref{higher_order_dp_lem}, and~\ref{crit_relation_lem}.
\end{proof}

\subsection{Bounding the number of weighted singularities}\label{sing_point_linear_bound_subsec}

We will now establish the upper bound on the number of singular points stated in Theorem~\ref{main_thm_2}.

\begin{proof}[Proof of Theorem~\ref{main_thm_2}]
By the discussion in Section~\ref{singularity_subsec}, we know that $\# C=\# \mathrm{crit}_C(f)$. Also recall that $\#_m\mathrm{crit}_P(f)\geq d_f-n_\Omega-1$ (see Section~\ref{pole_crit_points_subsec}), and $\#\mathrm{crit}_{\cS}(f)\geq \Delta$ by Lemmas~\ref{higher_order_cusp_lem} and~\ref{higher_order_dp_lem}, where $\Delta=\left(\sum_{p\in D} \delta_p + \sum_{p\in C} \delta_p\right)$.

By Lemmas~\ref{higher_order_cusp_lem},~\ref{higher_order_dp_lem}, and~\ref{crit_relation_lem}, 
\begin{align*}
&\#_m\left(\mathrm{crit}_C(f)\cup\mathrm{crit}_P(f)\cup\mathrm{crit}_{\cS}(f)\right)=\# \mathrm{crit}_C(f) + \#_m\left(\mathrm{crit}_P(f)\cup\mathrm{crit}_{\cS}(f)\right)\\
\geq &\
\begin{cases}
\# C + (d_f-n_\Omega-2) + \Delta & \mathrm{if}\quad n_\Omega \leq d_f-2,\\
\# C + \Delta & \mathrm{if}\quad n_\Omega>d_f-2.
\end{cases}
\end{align*}
Finally, by Proposition~\ref{schwarz_crit_pnt_prop},
\begin{align}
\notag &\# C + M + \Delta \leq 2d_f+2k-4,\\
\notag &\textrm{where }
M\coloneq
\begin{cases}
d_f-n_\Omega-2 & \mathrm{if}\quad n_\Omega \leq d_f-2,\\
0 & \mathrm{if}\quad n_\Omega>d_f-2,
\end{cases}\\
\label{thm_2_step_1_eqn}
\implies &\ \# C +\Delta  \leq m_1,\\
\notag &\textrm{where }
m_1\coloneq
\begin{cases}
d_f+n_\Omega+2k-2 & \mathrm{if}\quad n_\Omega \leq d_f-2,\\
2d_f+2k-4 & \mathrm{if}\quad n_\Omega>d_f-2,
\end{cases}
\end{align}
and $k=\mathrm{conn}(\Omega)$.

By Theorem~\ref{main_thm_1},
\begin{align}\label{thm_2_step_2_eqn}
\notag & k + \# D + \Delta \leq 
\min \{d_f+n_\Omega-2, 2d_f-4\}\\
\implies & k \leq m_2 - \# D - \Delta,\\
\notag &\textrm{where }
m_2\coloneq
\begin{cases}
d_f+n_\Omega-2 & \mathrm{if}\quad n_\Omega \leq d_f-2,\\
2d_f-4 & \mathrm{if}\quad n_\Omega>d_f-2.
\end{cases}
\end{align}
Putting Inequalities~\eqref{thm_2_step_1_eqn} and~\eqref{thm_2_step_2_eqn} together, we get
\begin{align*}
&\# C + \Delta \leq m,\\
\notag \textrm{where }
m &\coloneq
\begin{cases}
d_f+n_\Omega+2\cdot\left(m_2-\# D-\Delta\right)-2 & \mathrm{if}\quad n_\Omega \leq d_f-2,\\
2d_f+2\cdot\left(m_2-\# D-\Delta\right)-4 & \mathrm{if}\quad n_\Omega>d_f-2,
\end{cases}\\
&=
\begin{cases}
3d_f+3 n_\Omega-2\# D-2\Delta-6 & \mathrm{if}\quad n_\Omega \leq d_f-2,\\
6d_f-2\# D-2\Delta-12 & \mathrm{if}\quad n_\Omega>d_f-2,
\end{cases}\\
\implies &\# C + 2\cdot \# D+ 3 \Delta \leq  \min \{ 3d_f+3 n_\Omega-6, 6d_f-12\}.
\end{align*}
When $\infty$ is a node, Inequality~\ref{thm_2_step_2_eqn} can be promoted to
$$
k  \leq d_f+n_\Omega-\# D-\Delta- 3
$$
(see Section~\ref{pf_main_thm_1_subsec}).
Plugging this into Inequality~\eqref{thm_2_step_1_eqn}, we get
$$
\# C + 2\cdot \# D+ 3 \Delta \leq \min \{ 3d_f+3n_\Omega-8, 4d_f+2n_\Omega-10\}.
$$
This completes the proof of the theorem.
\end{proof}

\section{Constructing quadrature domains with prescribed connectivity\\ and double points: Special cases}\label{sharpness_sec}

In this final section, we will establish sharpness of the upper bound given by Theorem~\ref{special_case_thm} in low degrees; i.e., for $d_f\in\{3, 4\}$, by constructing special examples of quadrature domains with prescribed connectivity and number of double points. Specifically, we will produce Schwarz reflection dynamical systems using surgery and uniformization techniques from conformal dynamics, and this will give rise to the desired quadrature domains. 

We will also show how the above dynamical ideas can be used to give alternative constructions of non-singular quadrature domains $\Omega$ of maximal connectivity; i.e., $\mathrm{conn}(\Omega)=2d_f-4$, for arbitrary $d_f\geq 3$ (cf. \cite[Theorem~A, Theorem~B]{LM14}). 

We make the following remarks.
\begin{itemize}[leftmargin=4mm]
\item The methods for constructing multiply connected quadrature domains available in the literature either use Riemann surface theory  \cite{Bel04,CM04} or Hele-Shaw flow tools \cite{LM14}, which are substantially different from the dynamical techniques introduced in this section.

\item As the boundary of a quadrature domain is real-algebraic, the constructions illustrated below give a recipe to manufacture real-algebraic curves with controlled topology; we refer the reader to \cite{Vir84,Vir90} for constructions of such curves from an algebro-geometric point of view.
\end{itemize}

\subsection{The ingredients}\label{ingredients_subsec}

We start with the description of some piecewise analytic circle coverings that will be used in the construction of multiply connected quadrature domains.

\subsubsection{An expanding/hyperbolic circle map}\label{pre_deltoid_external_map_subsec}
Consider the maps
$$
\widecheck{\pmb{f}}(z)=z+\frac{1}{2z^2},\quad \mathrm{and}\quad \pmb{f}(z)=z+\frac{1}{4z^2}.
$$
Note that $\pmb{f}(z)=\frac{1}{\sqrt[3]{2}}\widecheck{\pmb{f}}(\sqrt[3]{2}z)$.
By \cite[Proposition~B.1]{LLMM3}, the map $\widecheck{\pmb{f}}$ is injective on $\overline{\D^*}$, where $\D^*:=\widehat{\C}\setminus\overline{\D}$, and hence $\pmb{f}$ also enjoys the same property.
Let $\pmb{\Omega}:=\pmb{f}(\D^*)$. By construction, $\pmb{\Omega}$ is an unbounded Jordan quadrature domain with the associated Schwarz reflection map $\pmb{\sigma}\vert_{\overline{\pmb{\Omega}}} \equiv \pmb{f}\circ \kappa\circ \left(\pmb{f}\vert_{\overline{\D^*}}\right)^{-1}$, where $\kappa(z)= 1/\overline{z}$.

Note that $\mathrm{crit}(\pmb{f})=\{0, 1/\sqrt[3]{2}, \omega/\sqrt[3]{2}, \omega^2/\sqrt[3]{2}\}$, where $\omega$ is a primitive third root of unity. A direct computation shows that for each non-zero critical point $c$ of $\pmb{f}$, we have that $\pmb{f}^{-1}(\pmb{f}(c))\subset\D$. Hence, for $c\in\mathrm{crit}(\pmb{f})\setminus\{0\}$, the corresponding critical value $\pmb{f}(c)$ lies in $\Int{\pmb{\Omega}^\complement}$; while $f(0)=\infty\in\pmb{\Omega}$.

We record the following properties of the map $\pmb{\sigma}$ (cf. \cite[\S 4.1]{LM23}).
\begin{itemize}[leftmargin=4mm]
\item Since no critical point of $\pmb{f}$ lies in $\mathbb{S}^1$, it follows that $\partial\pmb{\Omega}$ is a non-singular real-analytic Jordan curve, and hence $\pmb{\sigma}^{-1}(\pmb{\Omega})$ is compactly contained in $\pmb{\Omega}$. This is depicted in Figure~\ref{pre_deltoid_external_map_fig} (right); where the central light green Jordan domain is $\Int{\pmb{\Omega}^\complement}$, and the adjacent blue annular region is $\pmb{\sigma}^{-1}(\Int{\pmb{\Omega}^\complement})$.

\item The map $\pmb{\sigma}$ has a super-attracting fixed point at $\infty$. The other three critical points of $\pmb{\sigma}$ are mapped under $\pmb{\sigma}$ to $\Int{\pmb{\Omega}^\complement}$ (cf. Section~\ref{critical_subsec}).

\item The branched covering $\pmb{\sigma}:\pmb{\sigma}^{-1}(\pmb{\Omega})\to\pmb{\Omega}$ has degree two and a unique, simple critical point. Thus, by the Riemann-Hurwitz formula, the domain $\pmb{\sigma}^{-1}(\pmb{\Omega})$ is simply connected. In fact, the Jordan curve $\partial\pmb{\Omega}$ does not contain any critical value of $\pmb{\sigma}$, and hence $\pmb{\sigma}$ is locally injective on $\partial\pmb{\sigma}^{-1}(\pmb{\Omega})$. This implies that $\pmb{\sigma}^{-1}(\pmb{\Omega})$ is also a Jordan domain. 

We conclude that $\pmb{\sigma}:\pmb{\sigma}^{-1}(\pmb{\Omega})\to\pmb{\Omega}$ is an anti-polynomial-like map of degree two (cf. \cite[Chapter~I]{DH85}, \cite[\S 5]{IM21}). The filled Julia set of this anti-polynomial-like map is precisely the non-escaping set $\mathscr{K}(\pmb{\sigma})$ of the Schwarz reflection map (shown in gray in Figure~\ref{pre_deltoid_external_map_fig} (right)).
Since $\pmb{\sigma}$ has a fixed critical point, it follows by the straightening theorem for anti-polynomial-like maps that $\pmb{\sigma}\vert_{\mathscr{K}(\pmb{\sigma})}$ is topologically conjugate (conformally on the interior) to~$\overline{z}^2\vert_{\overline{\D}}$.

\item As $\pmb{\Omega}, \pmb{\sigma}^{-1}(\pmb{\Omega})$ are Jordan domains with $\pmb{\sigma}^{-1}(\pmb{\Omega})$ compactly contained in $\pmb{\Omega}$, we have that $\pmb{\Omega}\setminus\overline{\pmb{\sigma}^{-1}(\pmb{\Omega})}$ is an annulus (the blue annulus adjacent to $\pmb{\Omega}^\complement$ in Figure~\ref{pre_deltoid_external_map_fig} (right)) that maps as a degree three branched cover onto $\Int{\pmb{\Omega}^\complement}$ under $\pmb{\sigma}$. Alternatively, one can use the facts that $\Int{\pmb{\Omega}^\complement}$ is simply connected and that the degree three branched covering $\pmb{\sigma}:\pmb{\sigma}^{-1}(\Int{\pmb{\Omega}^\complement})\to\Int{\pmb{\Omega}^\complement}$ has precisely three critical points to deduce that $\pmb{\sigma}^{-1}(\Int{\pmb{\Omega}^\complement})$ is an annulus.
\end{itemize}
\begin{figure}[ht!]
\captionsetup{width=0.98\linewidth}
\begin{tikzpicture}
\node[anchor=south west,inner sep=0] at (0,0) {\includegraphics[width=0.96\textwidth]{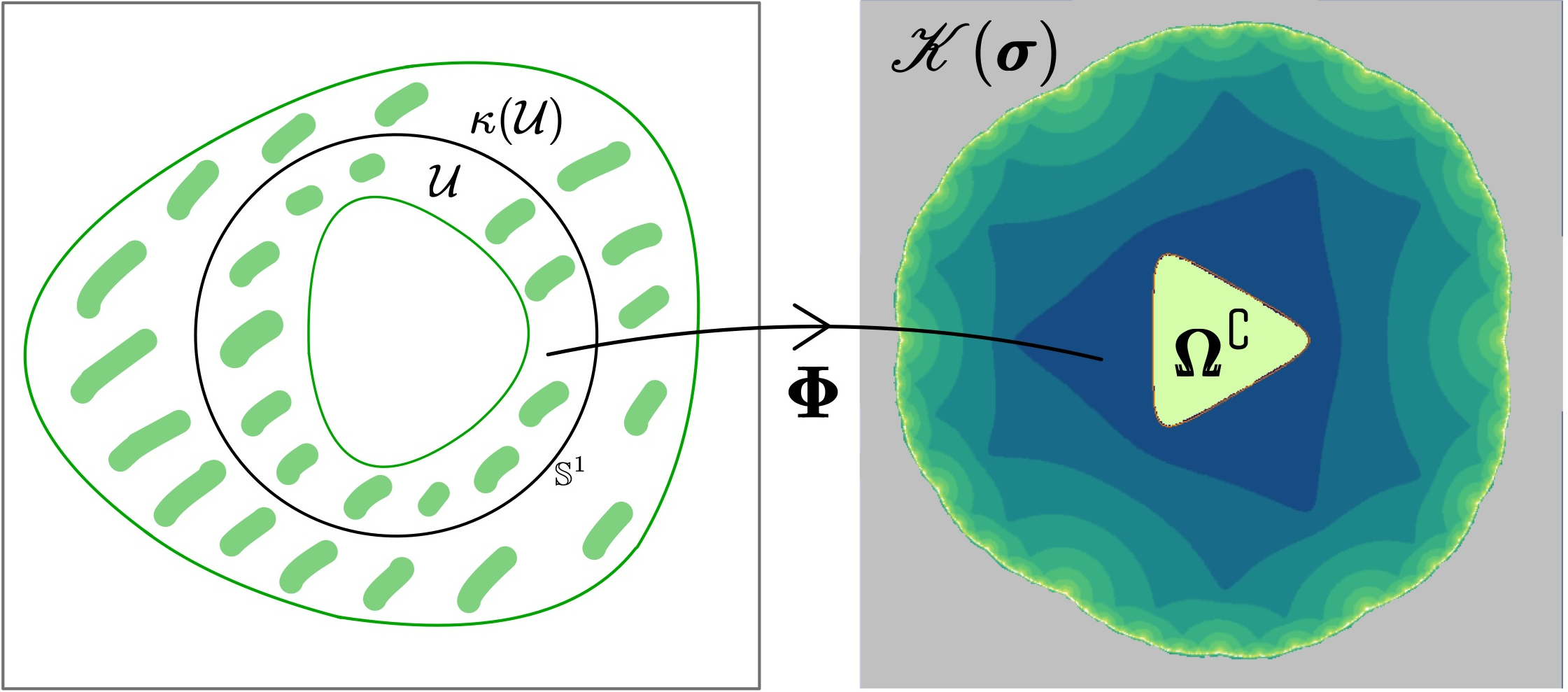}};
\end{tikzpicture}
\caption{Right: Illustrated is the dynamical plane of $\pmb{\sigma}$ with the non-escaping set $\mathscr{K}(\pmb{\sigma})$ in gray and the escaping set $T^\infty(\pmb{\sigma})$ in green/blue. Left: The domain of definition $\cV=\cU\cup\kappa(\cU)\cup\mathbb{S}^1$ of the map $\mathcal{E}$ is shaded in green. The restriction of $\mathcal{E}$ on $\mathbb{S}^1$ is an orientation-reversing, expanding, real-analytic double covering of itself, while $\mathcal{E}$ is the identity map on $\partial\cV$.}
\label{pre_deltoid_external_map_fig}
\end{figure}

We now look at the \emph{external map} of the above anti-polynomial-like map (cf. \cite[Chapter~I, \S 2]{DH85}). Specifically, let $\pmb{\Phi}:\D\to T^\infty(\pmb{\sigma})$ be a conformal isomorphism. The topological conjugacy between $\pmb{\sigma}\vert_{\mathscr{K}(\pmb{\sigma})}$ and $\overline{z}^2\vert_{\overline{\D}}$ implies that $\partial \mathscr{K}(\pmb{\sigma})=\partial T^\infty(\pmb{\sigma})$ is a Jordan curve, and hence $\pmb{\Phi}$ extends continuously to a homeomorphism between $\overline{\D}$ and $\overline{T^\infty(\pmb{\sigma})}$. Set $\cU:=\pmb{\Phi}^{-1}(\pmb{\Omega}\cap T^\infty(\pmb{\sigma}))$, and
$$
\mathcal{E}:\overline{\cU}\to\overline{\D},\ \mathcal{E}:= \pmb{\Phi}^{-1}\circ\pmb{\sigma}\circ\pmb{\Phi}.
$$
Note that $\partial\cU=\mathbb{S}^1\sqcup \pmb{\Phi}^{-1}(\partial\pmb{\Omega})$. The restriction of the map $\mathcal{E}$ to $\mathbb{S}^1$ is an orientation-reversing double covering, while $\mathcal{E}$ acts as the identity map on $\pmb{\Phi}^{-1}(\partial\pmb{\Omega})$. Using the Schwarz reflection principle, we extend $\mathcal{E}$ to a continuous map  $\mathcal{E}:\overline{\cV}\to\widehat{\C}$ such that $\mathcal{E}$ is antiholomorphic on $\cV$, where $\cV=\cU\cup\kappa(\cU)\cup\mathbb{S}^1$ is an annulus. It follows that $\mathcal{E}:\mathbb{S}^1\to\mathbb{S}^1$ is a real-analytic, expanding, orientation-reversing double covering. Further, 
\begin{enumerate}[leftmargin=6mm]
\item\label{identity_prop} $\mathcal{E}$ acts as the identity map on $\partial\cV=\pmb{\Phi}^{-1}(\partial\pmb{\Omega})\sqcup\kappa(\pmb{\Phi}^{-1}(\partial\pmb{\Omega}))$, 

\item\label{degree_prop} $\mathcal{E}:\pmb{\Phi}^{-1}(\pmb{\sigma}^{-1}(\Int{\pmb{\Omega}^\complement}))\to\pmb{\Phi}^{-1}(\Int{\pmb{\Omega}^\complement})$ and $\mathcal{E}:\kappa(\pmb{\Phi}^{-1}(\pmb{\sigma}^{-1}(\Int{\pmb{\Omega}^\complement})))\to\kappa(\pmb{\Phi}^{-1}(\Int{\pmb{\Omega}^\complement}))$ are degree $3$ branched coverings, and

\item\label{non_sing_prop} $\partial\cV$ consists of a pair of non-singular, real-analytic Jordan curves (this follows from the construction of $\cV$ and the fact that $\partial\pmb{\Omega}$ is non-singular)
 \end{enumerate}

\subsubsection{Two expansive/parabolic circle maps}\label{nielsen_anti_farey_subsec}
The dynamics of Schwarz reflections in quadrature domains is closely related to certain piecewise analytic maps associated with reflection groups acting on the hyperbolic plane (cf. \cite{LM23}). We will now recall two such maps that will facilitate the construction of quadrature domains with controlled topology.
\smallskip

\noindent\textbf{The Nielsen map.}
Let $C_j$ be the hyperbolic geodesic in $\D$ connecting $\exp{(\frac{2i\pi (j-1)}{3})}$ and $\exp{(\frac{2i\pi j}{3})}$, for $j\in\{1,2,3\}$. The geodesics $C_1, C_2, C_3$ bound an ideal triangle $\Pi\subset\D$, where $\Pi$ is closed in the topology of $\D$. We denote the components of $\D\setminus\Pi$ by $D_1, D_2, D_3$, where $C_j\subset \partial D_j$, for $j\in\{1,2,3\}$. We denote the anti-M{\"o}bius reflection in $C_j$ by $\rho_j$, and note that $\rho_j$ is an anti-conformal involution of $\D$. The maps $\rho_1,\rho_2$, and $\rho_3$ generate a discrete subgroup of the group of conformal and anti-conformal automorphisms of $\D$. This group is called the \emph{ideal triangle reflection group}.
The \emph{Nielsen map} of the ideal triangle reflection group is defined as
$$
\mathcal{N}:\overline{\D}\setminus\Int{\Pi}\to \overline{\D},\quad z\mapsto \rho_j(z),\ \mathrm{if}\ z\in\overline{D_j}.
$$
(See Figure~\ref{nielsen_farey_fig} (left).) The following properties of $\cN$ will be useful for us.
\begin{itemize}[leftmargin=4mm]
\item The map $\cN$ fixes $\partial\Pi$ pointwise.
\item The restriction $\cN\vert_{\mathbb{S}^1}$ is an orientation-reversing, piecewise analytic, $C^1$, degree two expansive covering map. Hence, $\cN\vert_{\mathbb{S}^1}$ is topologically conjugate to~$\overline{z}^2\vert_{\mathbb{S}^1}$.
\item $\cN:\cN^{-1}(\Int{\Pi})\to\Int{\Pi}$ is a $3:1$ covering map.
\end{itemize}
(We refer the reader to \cite[\S 2]{LLMM1}, \cite[\S 3.1]{LM23} for details on the Nielsen map.)
\begin{figure}[ht!]
\captionsetup{width=0.98\linewidth}
\begin{tikzpicture}
\node[anchor=south west,inner sep=0] at (0,0) {\includegraphics[width=0.4\textwidth]{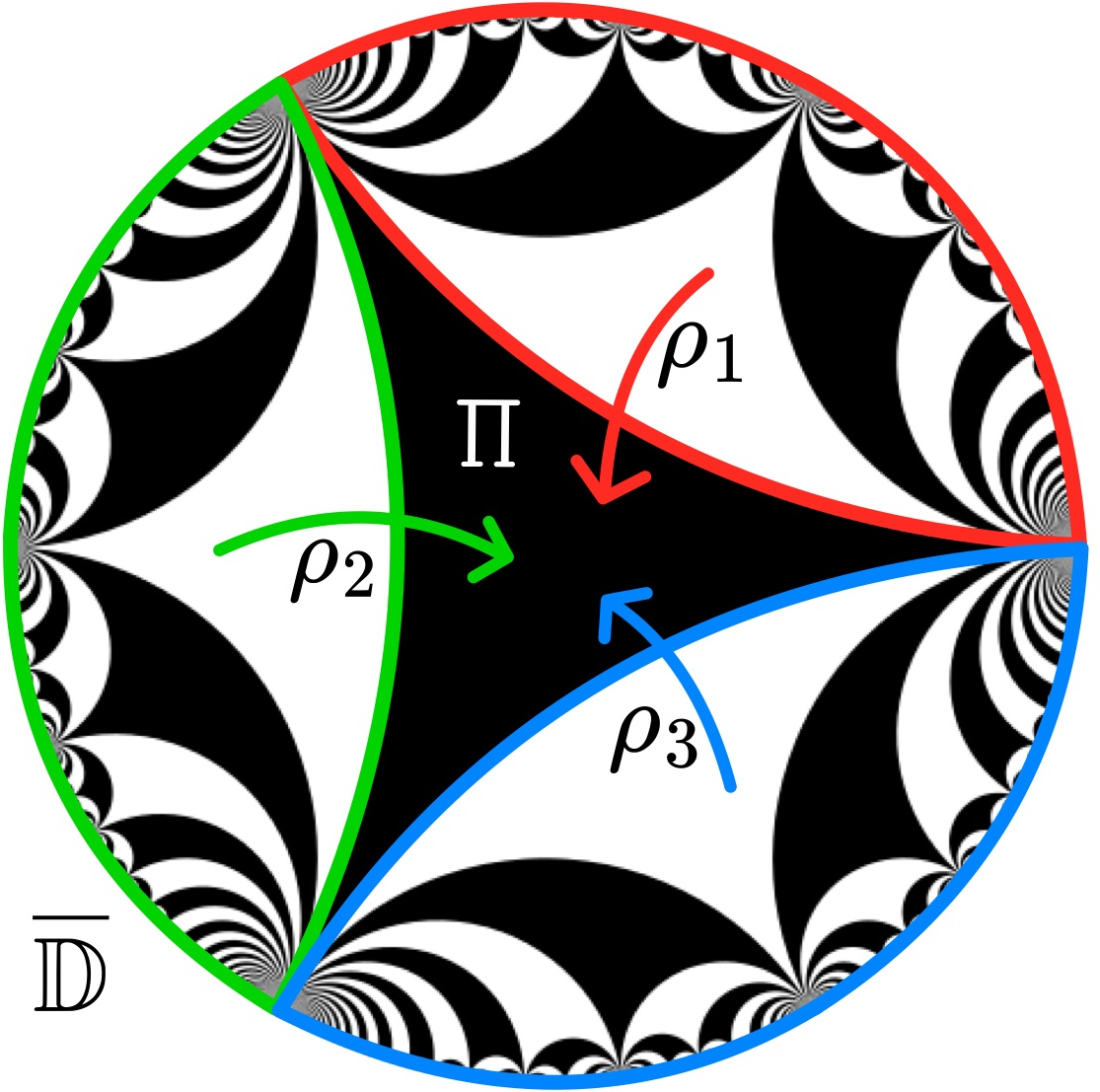}};
\node[anchor=south west,inner sep=0] at (6.4,0) {\includegraphics[width=0.4\textwidth]{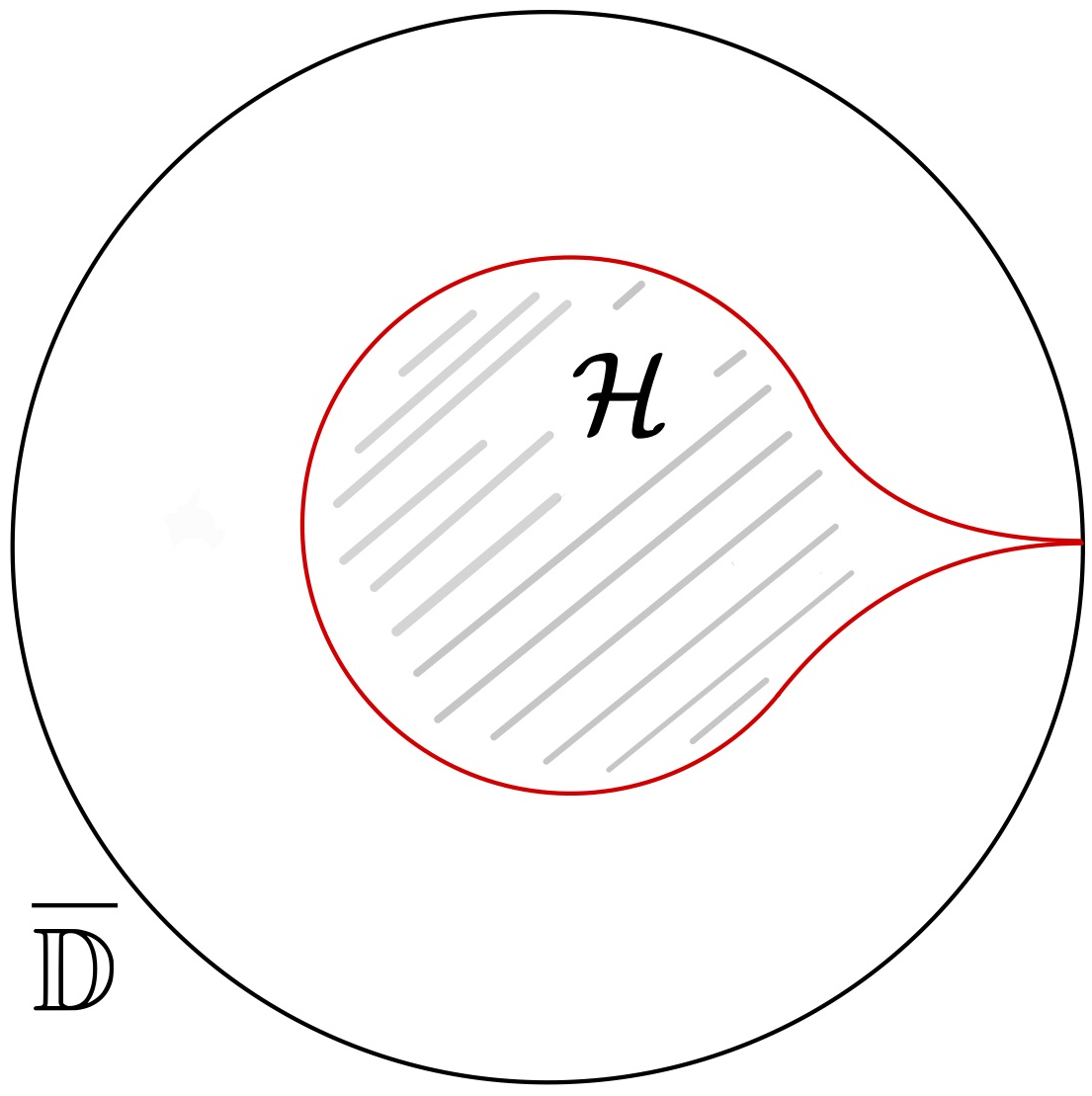}};
\end{tikzpicture}
\caption{Left: The Nielsen map $\cN$ is defined outside the interior of the central black triangle $\Pi$ as reflections in the sides of $\Pi$. This map commutes with rotation by $2\pi/3$. Right: The anti-Farey map, which is a factor of $\cN$ under the $2\pi/3-$rotational symmetry, is defined outside the interior of the `monogon' $\cH$.}
\label{nielsen_farey_fig}
\end{figure}

\noindent\textbf{The anti-Farey map.}
The map $\cN$ commutes with the rotation $M_\omega(z)=\omega z$, where $\omega=\exp{(2\pi i/3)}$. Hence, $\cN:\overline{\D}\setminus\Int{\Pi}\to \overline{\D}$ descends to a factor map
$$
\widehat{\cN}:\left(\overline{\D}\setminus\Int{\Pi}\right)/\langle M_\omega\rangle \longrightarrow \overline{\D}/\langle M_\omega\rangle
$$
via the natural projection from $\overline{\D}$ to the bordered quotient orbifold $\overline{\D}/\langle M_\omega\rangle$. As $z\mapsto z^3$ induces a conformal isomorphism $\zeta: \overline{\D}/\langle M_\omega\rangle\to \overline{\D}$, one obtains a map
$$
\cF:=\zeta\circ\widehat{\cN}\circ\zeta^{-1}:\overline{\D}\setminus\Int{\cH} \to \overline{\D},
$$
where $\cH:=\zeta\left(\Pi/\langle M_\omega\rangle\right)$. The map $\cF$ is called the \emph{anti-Farey} map. We record some important properties of $\cF$ (See Figure~\ref{nielsen_farey_fig}~(right)).
\begin{itemize}[leftmargin=4mm]
\item The map $\cF$ fixes $\partial\cH$ pointwise.
\item The restriction $\cF\vert_{\mathbb{S}^1}$ is an orientation-reversing, piecewise analytic, $C^1$, degree two expansive covering map, which is topologically conjugate to~$\overline{z}^2\vert_{\mathbb{S}^1}$.
\item $\cF:\cF^{-1}(\Int{\cH})\to\Int{\cH}$ is a $3:1$ branched covering map, fully branched over the origin.
\end{itemize}
(See \cite[\S 3.1]{LMM24}, \cite[\S 4.3.1]{LM23} for a detailed account of the anti-Farey map.)

\subsection{Effectiveness of Theorem~\ref{special_case_thm}: the $d_f=3$ case}\label{sharpness_1_subsec}

For $d_f=3$, Theorem~\ref{special_case_thm} reduces to $\mathrm{conn}(\Omega)+\# D\leq 2$. We will now illustrate how to construct quadrature domains with $d_f=3$ having
\begin{itemize}
\item connectivity $2$ and no double point, and
\item connectivity $1$ and one double point.
\end{itemize}

\subsubsection{Connectivity $2$, no double points}

Recall the continuous map $\mathcal{E}:\overline{\cV}\to\widehat{\C}$ (which is antiholomorphic on $\cV$) from Section~\ref{pre_deltoid_external_map_subsec}. By Property~\eqref{identity_prop}, $\cV$ is a quadrature domain of connectivity $2$. By Property~\eqref{degree_prop}, the map $\mathcal{E}:\mathcal{E}^{-1}(\Int{\cV^\complement})\to\Int{\cV^\complement}$ is a branched covering of degree $3$. Hence, the uniformizing meromorphic map $f$ of $\cV$ has degree $3$; i.e., $d_f=3$ (see Proposition~\ref{schwarz_deg_prop}). Finally, Property~\eqref{non_sing_prop} implies that $\partial\cV$ has no double points.

\subsubsection{Connectivity $1$, unique double point}

Let us consider the orientation-reversing, piecewise analytic, $C^1$, degree
two expansive covering maps $\cN$ and $\cF$. According to \cite[Theorem~5.2]{LMMN20}, these two maps can be conformally mated. Indeed, it follows from \cite[Example~4.3]{LMMN20} and \cite[Lemma~3.2]{LMM24} that the maps $\cN$ and $\cF$ satisfy the conditions of conformal mateability (respectively) as required in \cite[Theorem~5.2]{LMMN20}. We can mate these maps using a circle homeomorphism that conjugates $\cN$ to $\cF$ and maps the fixed point $1$ of $\cN$ to the fixed point $1$ of $\cF$.

We denote the conformal mating of $\cN:\overline{\D}\setminus\Int{\Pi}\to \overline{\D}$ and $\cF:\overline{\D}\setminus\Int{\cH}\to \overline{\D}$ by $\sigma:\overline{\Omega}\to\widehat{\C}$. By definition of conformal matings, there exist a $\sigma-$invariant Jordan curve $\mathfrak{J}$, and two conformal maps $\Psi_+:\overline{\D}\to\overline{\mathscr{D}_+}$ and $\Psi_-:\overline{\D^*}\to\overline{\mathscr{D}_-}$ (where $\mathscr{D}_\pm$ are the components of $\widehat{\C}\setminus\mathfrak{J}$) such that $\Psi_+$ conjugates $\cN:\overline{\D}\setminus\Int{\Pi}\to \overline{\D}$ to $\sigma:\overline{\Omega}\cap\overline{\mathscr{D}_+}\to\overline{\mathscr{D}_+}$, and $\Psi_-\circ\kappa$ conjugates $\cF:\overline{\D}\setminus\Int{\cH}\to \overline{\D}$ to $\sigma:\overline{\Omega}\cap\overline{\mathscr{D}_-}\to\overline{\mathscr{D}_-}$ (where $\kappa(z)= 1/\overline{z}$). The domain of definition $\Omega$ of the conformal mating $\sigma$, the Jordan curve $\mathfrak{J}$, and the Jordan domains $\mathscr{D}_\pm$ are depicted in Figure~\ref{suffridge_fig}. 
\begin{figure}[ht!]
\captionsetup{width=0.98\linewidth}
\begin{tikzpicture}
\node[anchor=south west,inner sep=0] at (0,0) {\includegraphics[width=0.45\textwidth]{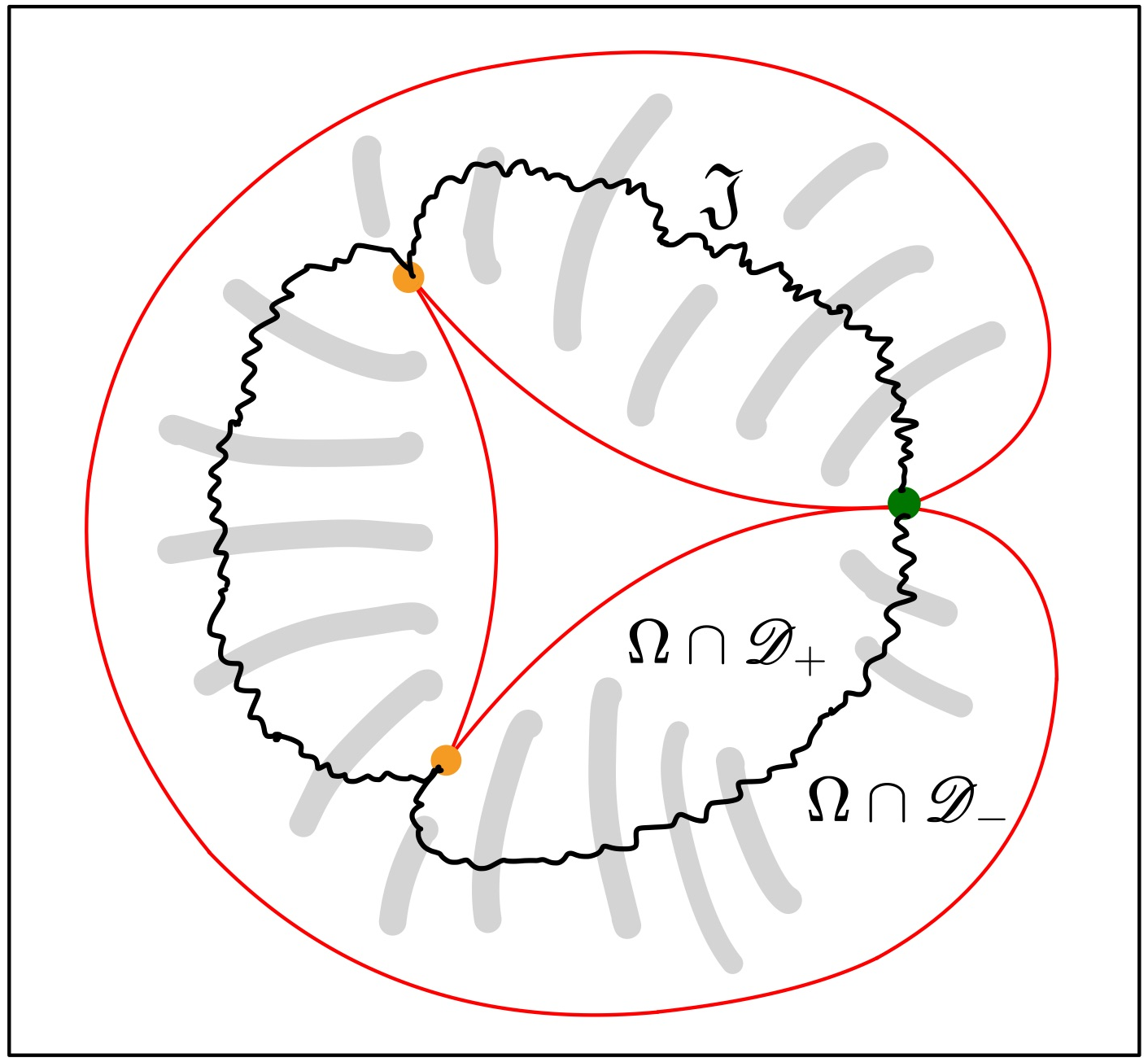}};
\end{tikzpicture}
\caption{The region shaded in gray, enclosed by the red curve, is the domain of definition of the conformal mating $\sigma$ of $\cN$ and $\cF$.}
\label{suffridge_fig}
\end{figure}

We note the following properties of the conformal mating.
\begin{enumerate}[leftmargin=6mm]
    \item[(a)] The set $\Omega$ is a simply connected domain and $\partial\Omega$ has a unique double point. This follows from the facts that $\partial\Pi$ is a triangle in $\overline{\D}$ intersecting $\mathbb{S}^1$ only in its three vertices (the third roots of unity), $\partial\cH$ is a Jordan curve in $\overline{\D}$ intersecting $\mathbb{S}^1$ only at $1$, and the point $1\in\partial\Pi$ is identified with the point $1\in\partial\cH$ in the  mating process (see Figure~\ref{suffridge_fig}).
    \item[(b)] The map $\sigma:\overline{\Omega}\to\widehat{\C}$ is a continuous map that is antiholomorphic on $\Omega$.
    \item[(c)] $\sigma$ acts as the identity map on $\partial\Omega$, and hence $\Omega$ is a quadrature domain of connectivity one with a unique double point 
    on its boundary.
    \item[(d)] As $\cN:\cN^{-1}(\Int{\Pi})\to\Int{\Pi}$ and $\cF:\cF^{-1}(\Int{\cH})\to\Int{\cH}$ are degree $3$
 branched coverings, it follows that $\sigma: \sigma^{-1}(\Int{\Omega^\complement})\to\Int{\Omega^\complement}$ is also a degree $3$
 branched covering. By Proposition~\ref{schwarz_deg_prop}, the uniformizing meromorphic map associated with $\Omega$ has degree $3$.
 \end{enumerate}
 
Thus, $\Omega$ is an example of a quadrature domain with the desired properties.
An explicit formula for the uniformizing meromorphic map of this quadrature domain can be found in \cite[\S 2.1]{LM14}. We also refer the reader to \cite[\S 7]{LMM21} for a different (limiting) construction of this quadrature domain.

\subsection{Effectiveness of Theorem~\ref{special_case_thm}: the $d_f=4$ case}\label{sharpness_2_subsec}
We will now furnish quadrature domains $\Omega$ with $d_f=4$ such that $\Omega^\complement$ has $m$ connected components and $\partial\Omega$ has $n$ double points, where $m+n=4$, $m\geq 1$. The key idea is to start with a specific antiholomorphic rational map, and suitably replace its action on the invariant Fatou components with the actions of the partially defined maps $\mathcal{E}, \cN, \cF$ of the disk.

\subsubsection{The Apollonian anti-rational map}\label{apollo_anti_rat_subsec}
Consider the antiholomorphic rational map (anti-rational map for short)
$$
\pmb{R}(z)=\frac{3\overline{z}^2}{2\overline{z}^3+1}.
$$
The map $\pmb{R}$ is critically fixed; i.e., it fixes all of its four simple critical points. In other words, each critical point of $\pmb{R}$ is a superattracting fixed point, and hence $\pmb{R}$ is a hyperbolic anti-rational map (cf. \cite[\S 19]{Mil06}). The dynamics of the map $\pmb{R}$ was considered in \cite[\S 8]{LLMM4}, where it was shown that the Julia set of $\pmb{R}$ is homeomorphic to the classical Apollonian gasket in a dynamically natural way (cf. \cite{LLM22}, \cite[\S 5.2]{LM23}). 
\begin{figure}[ht!]
\captionsetup{width=0.98\linewidth}
\begin{tikzpicture}
\node[anchor=south west,inner sep=0] at (0,0) {\includegraphics[width=0.45\textwidth]{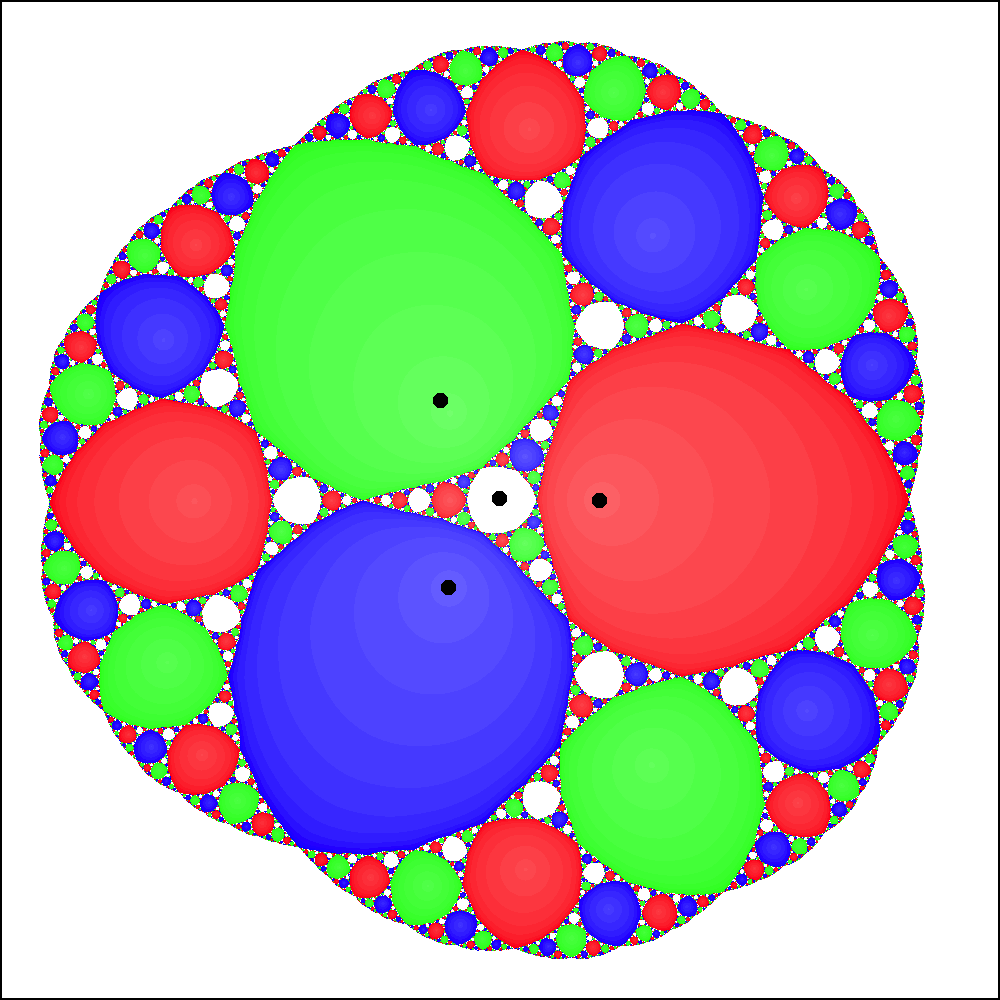}};
\end{tikzpicture}
\caption{The Julia set of the critically fixed cubic anti-rational map $\pmb{R}$ is shown. The black dots represent the critical points. The Fatou components containing the critical points touch each other pairwise.}
\label{apollonian_julia_fig}
\end{figure}

According to \cite[\S 8]{LLMM4}, the four super-attracting immediate basins of $\pmb{R}$ (i.e., the invariant Fatou components of $\pmb{R}$ containing the four critical points) are Jordan domains that touch each other pairwise at the six repelling fixed points of~$\pmb{R}$ (see Figure~\ref{apollonian_julia_fig}). We enumerate these invariant Fatou components as $\cW_1,\cdots,\cW_4$. For definiteness, let us refer to the blue, green, red, and white components as $\cW_1,\cW_2,\cW_3,\cW_4$, respectively. We denote (the homeomorphic extensions of) the Riemann uniformizations of the components $\cW_j$ by $\Psi_j:\overline{\D}\to\overline{\cW_j}$, such that $\Psi_j$ sends the origin to the (fixed) critical point in $\cW_j$, $j\in\{1,\cdots,4\}$. After possibly precomposing $\Psi_j$ with a rotation, we may assume that it conjugates $\overline{z}^2\vert_{\overline{\D}}$ to $\pmb{R}\vert_{\overline{\cW_j}}$, for $j\in\{1,\cdots,4\}$.  

\subsubsection{Constructing topological Schwarz reflection maps via surgery}\label{topo_schwarz_subsec}
We will now manufacture partially defined continuous maps on the sphere by replacing the action of $\pmb{R}$ on $\cup_{j=1}^4\cW_j$ with those of $\mathcal{E}, \cN$, or $\cF$ appropriately. The resulting maps will serve as topological models of the desired Schwarz reflection maps.
\begin{figure}[ht!]
\captionsetup{width=0.98\linewidth}
\begin{tikzpicture}
\node[anchor=south west,inner sep=0] at (0,0) {\includegraphics[width=0.9\textwidth]{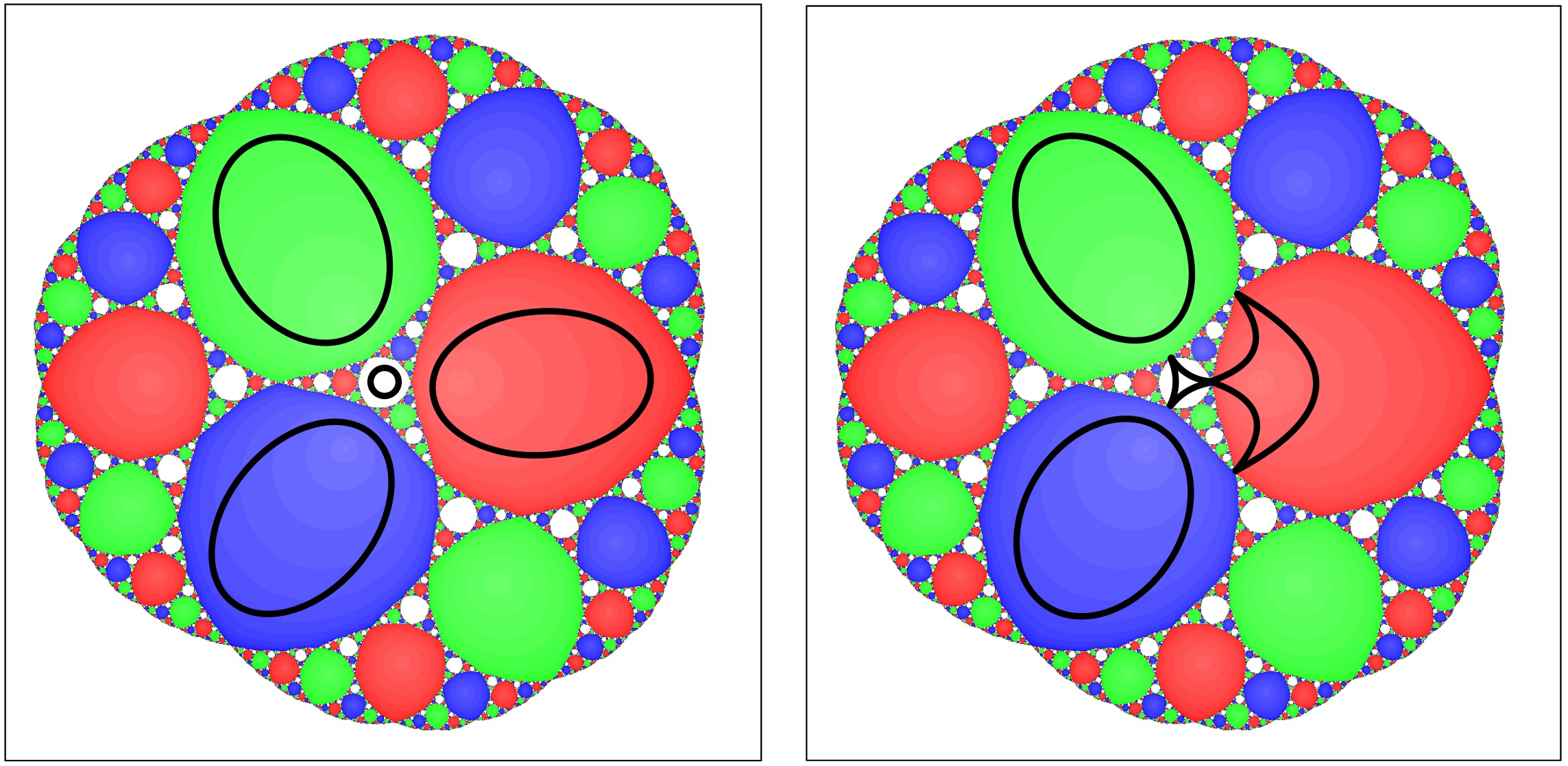}};
\node[anchor=south west,inner sep=0] at (0,-6) {\includegraphics[width=0.9\textwidth]{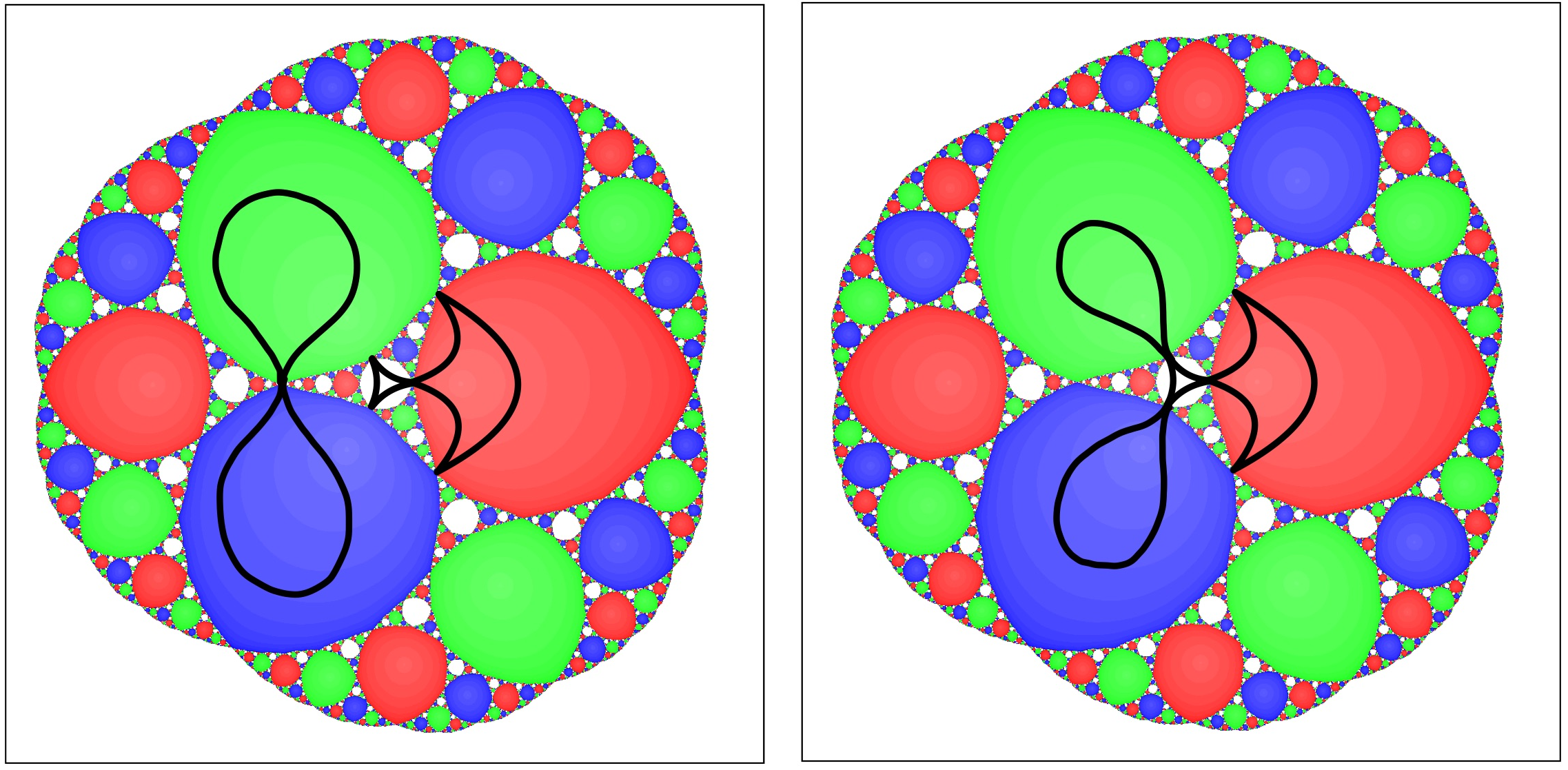}};
\end{tikzpicture}
\caption{The domains outside the black curves are the domains of definition of the `topological Schwarz reflection maps' constructed in Section~\ref{topo_schwarz_subsec}.}
\label{apollo_schwarz_fig}
\end{figure}

To this end, let $\cM_j\in\{\mathcal{E},\cN,\cF\}$, for $j\in\{1,\cdots,4\}$. There exist homeomorphisms $\theta_j:\mathbb{S}^1\to\mathbb{S}^1$ conjugating $\overline{z}^2\vert_{\mathbb{S}^1}$ to $\cM_j\vert_{\mathbb{S}^1}$. 
\begin{itemize}[leftmargin=4mm]
    \item If $\cM_j=\mathcal{E}$, then the conjugacy $\theta_j$ is quasisymmetric; indeed, two real-analytic, expanding endomorphisms of the circle of the same degree and orientation are quasisymmetrically conjugate (cf. \cite[Proposition~6.3]{McM88}, \cite[Exercise~2.3]{MS93}). In this case, we extend $\theta_j$ continuously to a  homeomorphism of $\overline{\D}$ that is quasiconformal on $\D$. 
    \item If $\cM_j=\cN$, then by \cite[Theorem~4.14]{LMMN20} the conjugacy $\theta_j$ between $\overline{z}^2$ and $\cM_j$ can be extended continuously to a homeomorphism of $\overline{\D}$ that is a David homeomorphism on $\D$. 
    \item Finally, if $\cM_j=\cF$, then by \cite[Lemma~3.2]{LMM24} the conjugacy $\theta_j$ between $\overline{z}^2$ and $\cM_j$ can be extended continuously to a homeomorphism of $\overline{\D}$ that is a David homeomorphism on $\D$.
\end{itemize}

We define a partially defined continuous map $\widetilde{\sigma}:\widehat{\C}\dashrightarrow\widehat{\C}$ as
$$
\widetilde{\sigma}\equiv \left\{\begin{array}{ll}
                    \pmb{R} & \mbox{on}\quad \widehat{\C}\setminus \displaystyle\bigcup_{j=1}^4 \mathcal{W}_j, \\
                   \Psi_j\circ\theta_j^{-1}\circ\cM_j\circ\theta_j\circ\Psi_j^{-1} & \mbox{on}\quad \Psi_j\left(\theta_j^{-1}\left( \mathrm{Dom}(\cM_j)\cap\D\right)\right),\ j\in\{1,\cdots,4\}.
                            \end{array}\right.
$$
(Here $\mathrm{Dom}(\cM_j)$ stands for the domain of definition of the map $\cM_j$.)
Displayed in Figure~\ref{apollo_schwarz_fig} are four instances of this construction, which we now explicate. Recall that the blue, green, red, and white immediate basins in Figure~\ref{apollo_schwarz_fig} are enumerated as $\cW_1,\cW_2,\cW_3$, and $\cW_4$, respectively.
\begin{enumerate}[leftmargin=6mm]
    \item (Top left of Figure~\ref{apollo_schwarz_fig}) Here, $\cM_j=\mathcal{E}$, for $j\in\{1,\cdots,4\}$. The domain of definition $\mathrm{Dom}(\widetilde{\sigma})$ of $\widetilde{\sigma}$ is the complement of four Jordan domains with pairwise disjoint closures. In particular, $\widetilde{\Omega}:=\Int{\mathrm{Dom}(\widetilde{\sigma})}$ is connected, it has $4$ complementary components, and its boundary has no cut-point.
    
    \item (Top right of Figure~\ref{apollo_schwarz_fig}) Here, $\cM_j=\mathcal{E}$, for $j\in\{1,2\}$, and $\cM_j=\cN$, for $j\in\{3,4\}$. The domain of definition $\mathrm{Dom}(\widetilde{\sigma})$ is the complement of four disjoint Jordan domains, two of which touch at a unique point, while the other two have disjoint closures. In particular, $\widetilde{\Omega}$ is connected, it has $3$ complementary components, and its boundary has a unique cut-point.
    
    \item (Bottom left of Figure~\ref{apollo_schwarz_fig}) In this case, $\cM_j=\cF$, for $j\in\{1,2\}$, and $\cM_j=\cN$, for $j\in\{3,4\}$. Here, $\mathrm{Dom}(\widetilde{\sigma})$ is the complement of four disjoint Jordan domains, such that the first pair (respectively, the second pair) touches at a unique point. Further. $\widetilde{\Omega}$ is connected, it has $2$ complementary components, and its boundary has exactly $2$ cut-points.
    
    \item (Bottom right of Figure~\ref{apollo_schwarz_fig}) As in the previous case, we have $\cM_j=\cF$, for $j\in\{1,2\}$, and $\cM_j=\cN$, for $j\in\{3,4\}$. However, the conformal maps $\Psi_j$ have been normalized in such a way that the point $1\in\partial\cH$ is welded at the unique point of intersection of $\partial \cW_j\cap\partial\cW_4$, for $j\in\{1,2\}$. Consequently, $\widetilde{\Omega}$ is connected, simply connected (i.e., it has a unique complementary component), and its boundary has $3$ cut-points.
\end{enumerate}

We note that in all the above cases, $\widetilde{\sigma}$ fixes the boundary $\partial\widetilde{\Omega}$ of its domain of definition pointwise.

\subsubsection{Uniformizing topological Schwarz reflections}\label{top_to_anal_subsec}

We pull back the standard complex structure on $\D$ under the map $\theta_j\circ\Psi_j^{-1}$ to define an almost complex structure on $\cW_j$, $j\in\{1,\cdots,4\}$. Since $\cM_j$ preserves the standard complex structure on $\D$ (while reversing the orientation), the map $\widetilde{\sigma}$ preserves the almost complex structure on $\cup_{j=1}^4\cW_j$ (while reversing the orientation). We extend this almost complex structure to the entire Fatou set of $\pmb{R}$ by pulling it back under $\widetilde{\sigma}$ (recall that $\widetilde{\sigma}$ is antiholomorphic outside $\cup_{j=1}^4\cW_j$). Since the Julia set of a hyperbolic rational map has zero area, this defines a $\widetilde{\sigma}-$invariant almost complex structure on the sphere (more precisely, $\widetilde{\sigma}$ preserves the measurable ellipse field, but reverses the orientation).

As $\pmb{R}$ is hyperbolic, the proof of \cite[Lemma~7.1]{LMMN20} applies verbatim to the current situation to show that the Beltrami coefficient on $\widehat{\C}$ defined by the above almost complex structure is a \emph{David coefficient} (see \cite[Chapter~20]{AIM09}, \cite[\S 2]{LMMN20} for background on David Beltrami coefficients and David homeomorphisms). We denote by $\mathfrak{G}$ the David homeomorphism of $\widehat{\C}$ that pulls back the standard complex structure to the above almost complex structure. Once again, the proof of \cite[Lemma~7.1]{LMMN20} implies that 
$$
\sigma:=\mathfrak{G}\circ\widetilde{\sigma}\circ\mathfrak{G}^{-1}:\overline{\Omega}\to\widehat{\C}
$$ is a continuous map that is antiholomorphic on $\Omega$, where $\Omega:=\mathfrak{G}(\widetilde{\Omega})$. By construction, $\sigma$ fixes $\partial\Omega$ pointwise, and hence it is the Schwarz reflection map associated with the quadrature domain $\Omega$. It now follows that the four different constructions described at the end of Section~\ref{topo_schwarz_subsec} give rise to quadrature domains $\Omega$ of connectivity $m\geq 1$ with $n$ double points on $\partial\Omega$ such that $m+n=4$. 

It remains to argue that the uniformizing meromorphic map $f$ of $\Omega$ has degree $d_f=4$. By Proposition~\ref{schwarz_deg_prop}, the integer $d_f$ is equal to the number of $\sigma-$preimages of a point $q\in\mathfrak{G}(\cW_1)\cap\Int{\Omega^\complement}$, counted with multiplicity. Note that $\pmb{R}^{-1}(\cW_1)=\cW_1\sqcup\cW_1'$, where $\cW_1$ maps to itself with degree $2$, and $\cW_1'$ is a prefixed Fatou component of $\pmb{R}$ that maps to $\cW_1$ univalently. Since the map $\widetilde{\sigma}$ agrees with $\pmb{R}$ outside $\cup_{j=1}^4\cW_j$, it follows that $\sigma^{-1}(q)\setminus \mathfrak{G}(\cW_1)$ consists of a unique point $q'\in\mathfrak{G}(\cW_1')$, and $q'$ maps to $q$ with local degree $1$. On the other hand, by the mapping properties of $\mathcal{E}$ and $\cF$, the point $q$ has three preimages (counted with multiplicity) under $\sigma$ in $\mathfrak{G}(\cW_1)$. 
Thus, the map $\sigma:\sigma^{-1}(\Int{\Omega^\complement})\to\Int{\Omega^\complement}$ has degree $4$, and hence $d_f=4$.

\subsection{Non-singular quadrature domains of maximal connectivity $2d_f-4$}\label{non_sing_qd_max_conn_subsec}

Consider a triangulation of the sphere having $s\geq 3$ vertices and $2s-4$ faces. Such a triangulation can be constructed, for instance, by starting with a single triangle and subdividing the faces into further triangles until one ends up with $s$ vertices. The $1$-skeleton of this triangulation is a connected, simple, $2$-connected plane graph. 

By \cite[Theorem~1.1]{LLM22}, there exists a critically fixed anti-rational map $\cR$ of degree $s-1$ such that the planar dual of the \emph{Tischler graph} $\mathcal{T}(\cR)$ of $\cR$ is isomorphic to the $1$-skeleton of the above triangulation (as plane graphs). The Tischler graph of $\cR$ is the union of (the closures of) all fixed internal rays in the invariant Fatou components; in particular, the number of vertices of $\cT(\cR)$ is equal to the number of distinct critical points of $\cR$ (cf. \cite[\S 4.1]{LLM22}). The fact that the chosen triangulation has $2s-4$ faces implies that $\cT(\cR)$ has $2s-4$ vertices; i.e., the anti-rational map $\cR$ has $2s-4$ distinct critical points each of which is fixed by $\cR$. By \cite[Theorem~9.3]{Mil06}, the Fatou components of $\cR$ containing the critical points are simply connected, and the restriction of $\cR$ to each of these invariant Fatou components is conformally conjugate to $\overline{z}^2\vert_{\D}$.

Recall the orientation-reversing double covering $\mathcal{E}$ of the circle. Using the arguments of Sections~\ref{topo_schwarz_subsec} and~\ref{top_to_anal_subsec}, we can now replace the action of $\cR$ on each of its invariant Fatou components with that of the map $\mathcal{E}$. This produces a Schwarz reflection map $\sigma$ defined on a non-singular quadrature domain $\Omega$ of connectivity $2s-4$; indeed, each component of $\Omega^\complement$ corresponds to an invariant Fatou component of $\cR$. Finally, the arguments of the last paragraph of Section~\ref{top_to_anal_subsec} show that in this case, each point in $\Int{\Omega^\complement}$ has $s$ preimages under $\sigma$, counted with multiplicity. It follows that the degree of the uniformizing meromorphic map $f$ of the quadrature domain $\Omega$ is equal to $s$; i.e., $s=d_f$. Thus, $\Omega$ is the desired non-singular quadrature domain of connectivity $2d_f-4$.

\end{document}